\newcommand{\mcM}{\mathcal{M}}
\newcommand{\scrG}{\mathscr{G}}
\newcommand{\scrJ}{\mathscr{J}}
\newcommand{\scrP}{\mathscr{P}}
\newcommand{\PP}{\mathbb{P}}
\newcommand{\C}{\mathbb{C}}
\newcommand{\N}{\mathbb{N}}
\newcommand{\FF}{\mathbb{F}}
\newcommand{\x}{\mathbf{x}}
\newcommand{\isom}{\cong}
\newcommand{\ot}{\otimes}
\newcommand{\by}{\! \times \!}
\newcommand{\Sym}{\operatorname{Sym}}
\newcommand{\Spin}{\operatorname{Spin}}
\newcommand{\val}{\operatorname{val}}
\newcommand{\GL}{\operatorname{GL}}
\newcommand{\PGL}{\operatorname{PGL}}
\newcommand{\sign}{\operatorname{sign}}
\newcommand{\ceil}[1]{\ensuremath{\lceil #1 \rceil}}
\newcommand{\sPf}{\operatorname{sPf}}
\newcommand{\sPfd}{\operatorname{sPf}^{\vee}}
\newcommand{\Pf}{\operatorname{Pf}}
\newcommand{\union}{\cup}
\newcommand{\Pol}{\operatorname{Pol}}
\newcommand{\Inv}{\operatorname{Inv}}
\newcommand{\bra}{\langle} 
\newcommand{\ket}{\rangle} 
\newcommand{\la}{\langle}
\newcommand{\grid}[3]{
\begin{scope}[yshift=5cm]
\foreach \row in {1, 4,..., #1}
{
	\foreach \x in {1.73205, 3.4641, ..., #2}
	{
	\begin{scope}[] 
		\draw[gray,very thin] (\x, \row) +(30:1cm) -- +(90:1cm) -- +(150:1cm) -- +(210:1cm) -- +(270:1cm) -- +(330:1cm) -- cycle;
	\end{scope}
	}
	\foreach \x in {1.73205, 3.4641, ..., #3}
	{
	\begin{scope}[xshift = 0.8660254 cm, yshift = -1.5 cm]
		\draw[gray,very thin] (\x, \row) +(30:1cm) -- +(90:1cm) -- +(150:1cm) -- +(210:1cm) -- +(270:1cm) -- +(330:1cm) -- cycle;
	\end{scope}
	}
}
	\foreach \x in {1.73205, 3.4641, ..., #2}
	{
	\begin{scope}[yshift = -2cm]
		\draw[gray,very thin] (\x, 0) +(30:1cm) -- +(90:1cm) -- +(150:1cm) -- +(210:1cm) -- +(270:1cm) -- +(330:1cm) -- cycle;
	\end{scope}
	}
\end{scope}
}
\theoremstyle{plain}
\newtheorem{theorem}{Theorem}[section]
\newtheorem{corollary}[theorem]{Corollary}
\newtheorem{proposition}[theorem]{Proposition}
\newtheorem{lemma}[theorem]{Lemma}
\newtheorem{observation}[theorem]{Observation}
\newtheorem{problem}[theorem]{Problem}
\theoremstyle{definition}
\newtheorem{defn}[theorem]{Definition}
\theoremstyle{definition}
\theoremstyle{definition}
\newtheorem{example}[theorem]{Example}
\theoremstyle{definition}
\newtheorem{algorithm}{Algorithm}
\title{Pfaffian circuits}
\author{Jason  Morton\thanks{Supported by the Defense Advanced Research Projects Agency under Award No. N66001-10-1-4040.}}
\begin{document}
\maketitle

\begin{abstract} 
It remains  an open question whether the apparent additional power of quantum computation derives inherently from quantum mechanics, or merely from the flexibility obtained by  ``lifting'' Boolean functions to linear operators and evaluating their composition cleverly.  Holographic algorithms provide a useful avenue for exploring this question.  We describe a new, simplified construction of holographic algorithms in terms of Pfaffian circuits.  Novel proofs of some key results are provided, and we extend the approach of \cite{HAWMG} to nonsymmetric, odd, and homogenized signatures, circuits, and various models of execution flow.  This shows our approach is as powerful as the matchgate approach.    
Holographic algorithms provide in general $O(n^{\omega_p})$ time algorithms, where $\omega_p$ is the order of Pfaffian evaluation in the ring of interest (with $1.19 \leq \omega_p \leq 3$ depending on the ring) and $n$ is the number of inclusions of variables into clauses.  Our approach often requires just the evaluation of an $n \times n$ Pfaffian, and at most needs an additional two rows per gate, whereas the matchgate approach is quartic in the arity of the largest gate.  
We give examples (even before any change of basis) including efficient algorithms for certain lattice path problems and an $O(n^{\omega_p})$ algorithm for evaluation of Tutte polynomials of lattice path matroids.
Finally we comment on some of the geometric considerations in analyzing Pfaffian circuits under arbitrary basis change.  Connections are made to the sum-product algorithm, classical simulation of quantum computation, and SLOCC equivalent entangled states.
\end{abstract}

\section{Introduction}

Valiant's \cite{QCtcbSiPT} matchgates give a variety of linear operators on two qubits which can be composed according to certain rules into matchcircuits, thereby simulating certain quantum computations in polynomial time.  An alternative formulation into {\em planar matchgrids} is also available \cite{ValiantFOCS2004}, and is equivalent \cite{MR2305569}.  
In this paper we detail a third formulation.  
If a circuit is built from matchgates, and the input and any bit of the output is fixed, the probability of observing that bit can be computed in classical polynomial time.  Valiant's two-input, two-output matchgates are characterized by two linear operators acting independently on the even and odd parity subspaces of a pair of qubits, with the restriction that the operators have the same determinant.  More generally matchgates are characterized by the vanishing of polynomials called {\em matchgate identities}.  These gates correspond to linear optics acting on non-interacting fermions with pairwise nearest neighbor gates and one-bit gates \cite{TerhalDiVincenzo2002,JozsaMiyake}.
  The theory of matchgates has been further developed by Cai et al., who explained the relation to tensor contractions \cite{MR2277247}.

We describe a framework for constructing and analyzing holographic algorithms which is equivalent to the matchgate formulation, but which does not use matchgates.  This builds upon our results in \cite{HAWMG}. 
The main contributions of the present work are as follows.  Theorem \ref{thm:otimesoplus} gives a new, explicit Pfaffian ordering together with a proof of its validity. Section \ref{ssec:homogodd} extends our construction \cite{HAWMG} to asymmetric, odd, and homogenized signatures, which shows that it is as powerful as the matchgate approach.  We then analyze the cost of this extension.  Given a $\mathsf{\#CSP}$ instance with $n$ inclusions of variables into clauses, our construction often requires just the evaluation of the Pfaffian of an $n\times n$ matrix.  Theorem \ref{thm:twonewedges} states that in the worst case, the extension requires an additional two matrix rows per homogenized predicate.  Section \ref{ssec:stdbasisapplications} derives some new algorithms for lattice path enumeration using these results.  Finally, Theorem \ref{thm:freebasisarity3} gives the first result on holographic algorithms with heterogeneous basis change: all arity-three predicates are implementable.

In Section \ref{sec:prelim}, we give necessary background, definitions, and notation.  In Section \ref{sec:tensorcontractioncircuits}, we describe the tensor contraction circuit formalism for readers unfamiliar with it.  In Section \ref{sec:HoloCircuits}, we introduce Pfaffian circuits and the method for evaluating them in polynomial time. 
In Section \ref{sec:CBholographic}, we analyze simple and compound Pfaffian predicates in the standard basis, which all holographic computations are reduced to before evaluation, and relate them to spinor varieties.  We also derive some algorithms for lattice path enumeration problems as examples. 
Finally in Section \ref{sec:basischange} we comment on some of the geometric considerations in analyzing Pfaffian circuits under arbitrary basis change.

\section{Preliminaries}\label{sec:prelim}

It is by now well known \cite{Beaudry07, Bravyi2008, MR2277247, Damm03, Markov2008, Montanari, NielsenChuang, QCtcbSiPT} that the counting version of a Boolean constraint satisfiability problem ($\mathsf{\#CSP}$) can be expressed as a tensor contraction.  Moreover, such tensor contraction {\em networks} or {\em circuits} serve as good general models for computation capable of being specialized to classical Boolean circuit computation, quantum computing, and other settings.  Restricting the set of tensor circuits allowed in various ways, one can accurately capture several complexity classes including Boolean circuits, $\mathsf{BQP}$, $\mathsf{\#P}$, etc. \cite{Damm03}.   

It is an open question whether the additional power of the quantum computational model\footnote{An example of how a quantum computation can be viewed as a $\mathsf{\#CSP}$ tensor contraction is the following.  All the preparations and operations are naturally expressed in terms of tensor products and contractions; indeed a quantum circuit diagram is a type of tensor contraction network.  Suppose $n$ input bits are placed in uniform superposition.  A unitary operator is applied which represents a CSP and outputs many garbage bits and one satisfiable-or-not bit $s$.  Then the probability of observing $s\!=\!1$ is the number of satisfying assignments divided by $2^n$.}
 comes inherently from the special features of quantum mechanics, or merely from the additional freedom which comes from ``lifting'' Boolean functions by viewing them instead as linear operators.  After such a lifting, we may study the complete computation as a tensor contraction, which may have a surprisingly efficient algorithm to evaluate.  Indeed, this has been a known open avenue to a possible (full or even partial \cite{aaronson2010computational,bremner2010classical}) collapse of the polynomial hierarchy. 

\subsection{Tensors and predicates}
Let $V^1, V^2, \dots, V^n$ be $2$-dimensional vector spaces and $V^{1\vee}, V^{2\vee}, \dots, V^{n\vee}$ their duals.  Fix a basis $\{v^i_0, v^i_1\}$ for each $V^i$; this will be called the standard basis (e.g. in quantum computing, these could represent choices of orthogonal bases).  Let $\{\nu^i_0, \nu^i_1\}$ be the dual basis of $V^{i*}$, so $\nu^i_j (v^i_k)=\delta_{jk}$.  Fixing an order of the $V^i$ we denote an induced basis element of $V^1 \ot V^2 \ot \cdots \ot V^n$ by a bitstring such as 
\[| 0100 \cdots 10 \ket = v^1_0 \ot v^2_1 \ot v^3_0 \ot v^4_0 \ot  \cdots \ot v^{n-1}_1 \ot v^n_0\]
and e.g. $\langle 00111 \cdots 10|$  (n bits) for an element of the induced dual basis.  Note that we omit the scripts identifying the vector spaces when the correspondence intended is clear from context. 
   When we need to keep track of the vector spaces involved, we write 
\begin{equation}\label{eq:labelledbraket}
| 0_1 1_4 0_7 \ket \quad \text{for} \quad v^1_0 \ot v^4_1 \ot v^7_0.
\end{equation}
A bitstring $x$ is sometimes abbreviated by the 
set of ones in the bitstring; e.g.\ we write  
$|J\ket$, with $J=\{3,4\}$ for the induced basis element $| 0011 \ket$ when $n=4$.  

A Boolean predicate or relation is a formula in a Boolean algebra (e.g. $p\!\implies\!q, \; p \vee q$), i.e. a truth table.  By designating some variables as inputs and others outputs, such that all possible inputs have a unique output, a predicate can also be viewed as a function or gate.  If there are more than one output, it could be viewed as a nondeterministic gate; with an input missing, a gate capable of terminating some computation paths.  

With the bitstring bra-ket notation, it is natural to express Boolean predicates and functions in the same terms as arbitrary linear transformations or tensors.  A predicate becomes the formal sum of the rows of its truth table as bitstrings.  For example $OR_3 = (| 0\ket+| 1 \ket)^{\ot 3}-| 000\ket$. Thus any Boolean predicate can be viewed as a multilinear operator.  Thus any Boolean predicate is a multilinear operator.  When we want to think of, e.g.  $AND_{2,1}=| 00,0\ket+| 01,0\ket+| 10,0\ket+| 11,1\ket$ as a gate or function accepting two bits as input and outputting another, we separate the input and output by a comma.  However nothing has really changed and this is just to aid in reasoning about a circuit when we orient its edges (see Figure \ref{fig:XORs}).  Viewed as linear operators in an explicit basis, predicates may be composed by matrix multiplication.  Tensors with a mix of primal and dual vector spaces have mixed {\em ality}.  Say $| 0_10_21_3 \ket$ is a ${ 0 \choose 3}$ ality tensor and $\bra 1_41_51_61_7 |$ is a ${4 \choose 0}$ tensor, $| 0_10_21_3 \ket \la 1_41_51_61_7 |$ is ${ 4 \choose 3}$.  If instead we have  $\bra 0_21_31_41_5 |$, the partially contracted $ \la 0_21_31_41_5 | 0_10_21_3 \ket$ is a ${2 \choose 1}$ tensor equal to $|0_1\ket\bra 1^41^5|$.  A ${0 \choose 0 }$ tensor (e.g.  $\la 0_10_21_3 | 0_10_21_3 \ket=1$) is a scalar.
A {\em gate} or  {\em $n$-gate} is a ${0 \choose n}$ tensor of this type (e.g. $OR_3$, $AND_{2,1}$), and a {\em cogate} or  {\em n-cogate} is a  ${n \choose 0}$ tensor.  A general ${n \choose m}$ tensor is a {\em predicate}.  The {\em arity} of an ${n \choose m}$ predicate is $n+m$, the tensor degree or number of edges incident when the predicate is included in a circuit.

We can define a restricted class $\scrP$ of allowed predicates.  Formally these are a (possibly infinite) collection of unlabeled predicates $|I\ket$ which map subsets of $[n]$ of size $|I|$ to labeled predicates, in a computation of size $n$.   These generate a space of possible circuits or tensor formulae called compound predicates by tensor product and partial contraction.  A generating set of predicates which enables us to construct all Boolean functions is said to be universal.  Because swap and fanout are not always included in such a generating set, we cannot simply appeal to results such as Post's lattice \cite{post_two-valued_1941} to analyze the space of circuits computable by a given generating set of predicates.  Worse, for holographic algorithms there are further restrictions on how available predicates can be composed to form circuits.  In some cases, merely determining whether one predicate is implementable in terms of another can be undecidable \cite{CookPredicates}.
\begin{theorem}[\cite{CookPredicates}]
Without fanout, there is no general procedure for deciding the following question: Given a predicate $X$, can a graph of $X$'s be built that implements a given desired predicate $Y$?
\end{theorem}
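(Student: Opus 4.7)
The plan is to establish undecidability via reduction from a classical undecidable problem---for concreteness, the halting problem for two-register Minsky machines, although Post's correspondence problem or a semigroup word problem would serve equally well. The goal is, given a Minsky machine $M$, to construct a Boolean predicate $X_M$ and a fixed target predicate $Y$ such that $Y$ is implementable by a fanout-free graph of copies of $X_M$ if and only if $M$ halts from its initial configuration.

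First I would design $X_M$ so that its truth table packages all of $M$'s elementary transitions: the bits entering and leaving a single copy of $X_M$ encode a (state, register-increment, branch) tuple, with the rows chosen so that connecting one $X_M$ to another via wires is consistent only when the two copies represent successive legal steps of $M$. The target $Y$ encodes the boundary condition ``initial configuration in, halting configuration out,'' so that any graph of $X_M$'s implementing $Y$ is precisely a finite sequence of legal $M$-transitions from the initial to the halting state, and conversely any halting run of $M$ unrolls into such a graph.

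The main obstacle is the absence of fanout. Minsky machines read each register value repeatedly, which is exactly the operation forbidden when wires cannot branch. To circumvent this, I would represent each register unarily by a bundle of wires and have $X_M$ thread the entire current configuration through itself on each invocation, so that rather than copying values $X_M$ destructively consumes them and re-emits an updated configuration on a fresh set of wires. Structural bookkeeping bits---parity checks, timestamp tags, or wire-type markers coordinating which wires belong to which register---would be included in the truth table of $X_M$ to rule out ``spurious'' wirings that are locally consistent but do not correspond to a genuine linear trace of $M$. Once this rigidity is established, implementability of $Y$ by a graph of $X_M$'s reduces to halting of $M$, yielding undecidability.
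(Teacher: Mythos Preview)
The paper does not prove this theorem at all: it is quoted, with attribution to \cite{CookPredicates}, as a known external result and is used only to motivate why the implementability question for restricted predicate families is nontrivial. There is therefore no ``paper's own proof'' to compare against.

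Assessing your proposal on its own merits: the overall strategy of reducing from an undecidable machine-halting problem is the natural one, but as written there is a genuine gap. Your predicate $X_M$ has some fixed arity, yet a Minsky-machine configuration carries two unbounded counters. Your proposed fix is to ``represent each register unarily by a bundle of wires'' and have each copy of $X_M$ ``thread the entire current configuration through itself''; but a single copy of a fixed-arity $X_M$ cannot thread an unbounded bundle. What you actually need is a \emph{local} encoding in which many copies of $X_M$ jointly represent one configuration (the unary counter spread across a row of gadgets), and a single $M$-step corresponds to a two-dimensional patch of $X_M$'s rather than a single copy. That is the standard tiling-style trick, and it is exactly the part that requires real work: you must show that every fanout-free wiring of $X_M$'s whose boundary matches $Y$ is forced to look like a valid space-time diagram of $M$, not merely that valid diagrams exist. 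Your ``parity checks, timestamp tags, or wire-type markers'' gesture at this rigidity argument but do not supply it, and without fanout the usual tricks for propagating global consistency constraints are unavailable. A reduction from the word problem for finitely presented semigroups (Post--Markov) is likely cleaner here than Minsky machines, since fanout-free composition of a single gadget along a path is already essentially a semigroup word, and the unbounded-counter issue disappears.
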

The study of holographic algorithms and related questions is in large part concerned with studying certain restricted families of predicates, subject to additional composition rules, and asking what the resulting space of circuits can compute.  The result is an interplay between the algebraic geometry which describes the space of allowed predicate families and the combinatorial and computational complexity theory needed to analyze the power of compositions to compute useful things.

In the language of universal algebra \cite{bulatov2005classifying}, which has been central to the recent $\mathsf{\#CSP}$ dichotomy theorem \cite{bulatov2010complexity} over finite relational structures (i.e.\, for combinatorially unrestricted circuits of unlifted predicates), our construction can be described as follows.  Let $A$ be a finite alphabet and $A^n$ the set of all $n$-tuples of elements of $A$ or {\em n-ary relations over $A$}.  Let $R_A = \union_{n \in \N} A^n$ be the set of all finitary relations over $A$.  A constraint language, or restricted class of predicates $L$ is subset of $R_A$.  The set of implementable predicates is characterized by a closure with respect to invariants.
\begin{theorem}[\cite{bodnarchuk1969galois,geiger1968closed}]
Without variable or combinatorial restrictions (i.e.\ with the presence of fanout and swap), the set of predicates implementable by compositions of a given set of predicates $L$ is the {\em clone} of $L$, $\Inv(\Pol(L))$.  
\end{theorem}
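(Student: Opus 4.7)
This is the classical Galois correspondence of Bodnarchuk--Kaluzhnin--P\"oschel and Geiger between operation clones and relation clones on a finite set. The plan is to establish the two inclusions separately, relying on the fact that, in the presence of swap and fanout, ``implementable from $L$'' coincides with ``primitive positive definable from $L$'' (pp-definable: closed under conjunction, existential quantification, variable permutation, and equality).

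For the easy direction, implementable $\subseteq \Inv(\Pol(L))$, I would verify that each primitive composition operation preserves $\Inv(\Pol(L))$-membership. Tensor product corresponds to Cartesian product on disjoint variable sets and is preserved coordinate-wise by any polymorphism. Partial contraction decomposes into variable identification followed by existential projection; both preserve invariance, the former because any operation $f$ fixes diagonals ($f(a,\ldots,a)=f(a,\ldots,a)$) and the latter because $f$ commutes with projection. Swap is pure relabeling, and fanout introduces the equality relation, also invariant by the diagonal argument. Since $L \subseteq \Inv(\Pol(L))$ tautologically, closure under these four operations remains inside $\Inv(\Pol(L))$.

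The reverse inclusion $\Inv(\Pol(L)) \subseteq $ implementable is the substance of the theorem. Given an $n$-ary $R \subseteq A^n$ preserved by every polymorphism of $L$, I plan to construct an explicit pp-formula over $L$ defining $R$ via the standard indicator/free-algebra construction. Enumerate the $m = |R|$ tuples of $R$ as rows of a matrix $M \in A^{m\times n}$, and let $\kappa_1,\ldots,\kappa_n \in A^m$ denote its columns. Form the subalgebra $B \subseteq A^m$ generated by $\{\kappa_1, \ldots, \kappa_n\}$ under coordinate-wise application of the operations in $\Pol(L)$. The crucial sublemma is that $B$ is itself pp-definable from $L$: one conjoins every $L$-constraint of the form $R_0(y_{j_1},\ldots,y_{j_k})$ for $R_0 \in L$ of arity $k$ and indices $(j_1,\ldots,j_k)\in[m]^k$ such that the constraint is satisfied by each generator $\kappa_i$. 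One then recovers $R$ from $B$ by a pp-projection that extracts an $n$-tuple row from an $m$-tuple column, using equality constraints to match coordinates to the matrix structure of $M$.

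The main obstacle is the separation lemma underpinning the sublemma: one must prove that the conjunction above cuts out exactly $B$, not merely a superset. Equivalently, for any $a \in A^n \setminus R$, there must exist an $L$-constraint (instantiated on variables indexed by $[m]$) violated by $a$ but satisfied on all generators. The contrapositive supplies a witness: if no such separator exists, one extends the assignment $\kappa_i \mapsto a_i$ to a well-defined polymorphism of $L$ on all of $A$, contradicting $R \in \Inv(\Pol(L))$. Finiteness of $A$ is essential both in ensuring that this extension is consistent and definable, and in guaranteeing that the iterative generation of $B$ terminates after finitely many steps, so that only finitely many constraints enter the conjunction.
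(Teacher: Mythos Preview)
The paper does not prove this statement: it is quoted as a classical theorem of Bodnarchuk--Kalu\v{z}nin--Kotov--Romov and Geiger, with a pointer to surveys for details. So there is no in-paper proof to compare against; your outline is being measured against the standard argument.

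Your high-level plan is the standard one, and the easy inclusion is fine. The hard direction, however, has an indexing slip that breaks the construction as written. In the Geiger construction the pp-formula has one variable $x_a$ for each element $a\in A^m$ (or at least for each element of the subalgebra $B$), with the $n$ free variables being those attached to the columns $\kappa_1,\dots,\kappa_n\in A^m$; a satisfying assignment is then literally an $m$-ary polymorphism $h\colon A^m\to A$, and $(h(\kappa_1),\dots,h(\kappa_n))\in R$ follows directly from $R\in\Inv(\Pol(L))$. You instead introduce only $m$ variables $y_1,\dots,y_m$ and conjoin all $L$-atoms on those $m$ variables satisfied by each $\kappa_i$. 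That formula defines an $m$-ary relation $\Phi\subseteq A^m$, not an $n$-ary one, and your step ``recover $R$ from $B$ by a pp-projection that extracts an $n$-tuple row from an $m$-tuple column'' has no clear meaning: there is no pp-projection from $A^m$ to $A^n$ that sends $B$ (or $\Phi$) to $R$ in general.

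The same slip shows up in your separation lemma: you speak of $a\in A^n\setminus R$ violating an $L$-constraint on variables indexed by $[m]$, which is a type mismatch. The sentence ``extend the assignment $\kappa_i\mapsto a_i$ to a polymorphism'' is exactly right---but it produces an $m$-ary polymorphism $h\colon A^m\to A$, and it belongs to the standard construction with $|A|^m$ variables, not to the $m$-variable formula you wrote down. If you re-index so that the conjunction ranges over atoms $R_0(x_{a^1},\dots,x_{a^k})$ with $a^\ell\in A^m$ and the free variables are $x_{\kappa_1},\dots,x_{\kappa_n}$, the rest of your outline (including the finiteness remark) goes through verbatim.
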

This yields a Galois correspondence between $R_A$ and sets of finitary operations on $A$ analogous, for example, to the correspondence between ideals and algebraic varieties.  See \cite{bohler2003playing, bohler2004playing, bulatov2005classifying} for details.
Denote by $\C R_A$ the tensor algebra built from the vector space $\C A$ spanned by the elements of $A$.  
This is just the noncommutative ring over $\C$, whose monoid has elements $n$-tuples of elements of $A$ and semigroup operation tuple concatenation. 
There is a natural embedding 
which we call {\em lifting} of $R_A$ into $\C R_A$ which sends an $n$-tuple of $A$ to the corresponding induced basis vector.  We have the additional wrinkle of putting variables and constraints on the same footing, so that we need both a constraint language $L \subset R_A$ and a variable language  $V \subset R_A$ (in fact lifted versions of both).  In this paper we restrict attention to the binary case $|A|=2$.  We also note that this lifting can be profitably expressed in terms of functors of monoidal categories, but postpone details of this approach to future work.

Given such a restricted class of predicates, introducing a change of basis expands, or more properly rephrases, the set of predicates available.  To the extent the rephrasing allows us to work with familiar Boolean predicates or close relatives, this makes the computational complexity aspect more manageable.  However the two predicates connected by a shared vector space, one with the primal and one with the dual, must use the same basis on that vector space.  Suppose $A$ is the change of basis on such a vector space $V$, with
\[
A= \bordermatrix{
&| 0 \ket & | 1 \ket \cr
| 0 \ket & a_{00} & a_{10}\cr
| 1 \ket& a_{01} & a_{11}\cr
}
\]
so that $A:| 0 \ket \mapsto  a_{00}| 0\ket+a_{01}| 1 \ket$ and  $ | 1 \ket \mapsto  a_{10} | 0\ket+a_{11}| 1 \ket$.  Then the dual change of basis applying $(A^{-1})^{\top}$ on $V^{\vee}$ is $A^{\vee}:\la 0 | \mapsto (\det A)^{-1} \big ( a_{11}\la 0 |  - a_{10} \la 1 | \big )$ and  $ \la 1 | \mapsto (\det A)^{-1}\big ( -a_{01} \la 0|+a_{00}\la 1| \big )$.  We can see that $1= \la 0 | 0 \ket = \la A^{\vee}(\la 0 |), A( | 0 \ket ) \ket = (\det A)^{-1} \big ( a_{11}a_{00} \la 0 | 0\ket -  a_{10}a_{01} \la 1 | 1\ket \big ) = 1$ 
and similarly for $\la 1 | 1 \ket$; moreover $\bra 0 | 1 \ket = \bra 0 | 1 \ket=0$.  We record this observation as follows.
\begin{proposition} \label{prop:basischangepairing}
Applying a change of basis $A$ to any vector space factor of the tensor space of a contraction and $A^{\vee}$ to its dual does not affect the value of the pairing. 
\end{proposition}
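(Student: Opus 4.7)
The plan is to reduce the global statement to the invariance of the evaluation pairing $\text{ev}_V : V^{\vee} \ot V \to \C$ at the single factor being rebased, and then to verify that invariance on a basis of $V^{\vee} \ot V$. Once a single $V^{\vee}$--$V$ contraction is shown to be unaffected, the full contraction of a compound predicate factors through this local map (tensored with the identity on the remaining spaces), and multilinearity propagates the invariance to the overall scalar value.

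First I would observe that any tensor contraction along the vector space $V$ can be rewritten, after reordering the factors, as $\text{id}_W \ot \text{ev}_V$ for some complementary tensor space $W$. A change of basis by $A$ on $V$ and $A^{\vee}$ on $V^{\vee}$ modifies the map to $\text{id}_W \ot \bigl(\text{ev}_V \circ (A^{\vee} \ot A)\bigr)$, so it suffices to show $\text{ev}_V \circ (A^{\vee} \ot A) = \text{ev}_V$ as maps $V^{\vee} \ot V \to \C$.

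Second, since both sides are bilinear and the four tensors $\la i | \ot | j \ket$ with $i,j \in \{0,1\}$ span $V^{\vee} \ot V$, the equality reduces to four numerical identities. These are exactly the calculations performed immediately before the proposition: substituting the explicit formulas for $A\,| i \ket$ and $A^{\vee}\la j |$ and using $(\det A)(\det A)^{-1} = 1$, the cross terms cancel and one recovers $\la j | i \ket = \delta_{ij}$. Equivalently, writing $A^{\vee} = (A^{-1})^{\top}$, one has $(A^{\vee})^{\top} A = A^{-1} A = I$, which is precisely the matrix form of $\text{ev}_V \circ (A^{\vee} \ot A) = \text{ev}_V$.

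The only real subtlety is bookkeeping rather than computation: one must ensure that on every contracted edge the factor carrying $A$ is paired against the factor carrying $A^{\vee}$, with orientations consistent across all predicates sharing that edge. Once that is fixed, and the single-factor invariance is applied at each rebased tensor slot in turn, the value of the pairing is unchanged. There is no deeper obstacle; the statement is essentially a restatement of the definition of the dual basis via the relation $(A^{-1})^{\top} A^{\top} = I$.
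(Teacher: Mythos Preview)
Your proposal is correct and follows essentially the same approach as the paper: the paper verifies directly on the four basis pairs $\la j|\ot|i\ket$ that $\la A^{\vee}(\la j|), A(|i\ket)\ket = \delta_{ij}$ and then records the proposition as an observation. Your added framing via $\text{ev}_V \circ (A^{\vee}\ot A)=\text{ev}_V$ and the matrix identity $(A^{\vee})^{\top}A = A^{-1}A = I$ is a cleaner packaging of the same computation, not a different argument.
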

However, applying a change of basis on each vector space of a pairing may enable us to simplify its evaluation.  The process is somewhat analogous to simplifying or decomposing a matrix or tensor to a more manageable form by Givens rotations.  Here the ``rotations'' serve to translate between predicates which are convenient to reason about in computational complexity terms (e.g. Boolean predicates) and predicates whose composition can be evaluated efficiently.

A very useful change of basis is the Hadamard change of basis
\[
H= \begin{pmatrix}
 1 & 1\cr
 1 & -1\cr
\end{pmatrix}
\]
mapping  $| 0 \ket \mapsto  | 0\ket+| 1 \ket$ and  $ | 1 \ket \mapsto  | 0\ket - | 1 \ket$; we have $H^{\vee}=\frac{1}{2}H$. We avoid the more natural self-dual $\sqrt{2}H$ only to maximize the opportunity for exact integer arithmetic.   In most cases any constants can be ignored until organizing the final computation.

\begin{example} The Not-All-Equal clause is true if the variables are not all equal
\[
NAE_3 = |001\ket + |010\ket + |100\ket + |011\ket + |101\ket + |110\ket.
\]
Under the Hadamard change of basis it equals
\[
H^{\ot 3} NAE_3 = 6|000\ket -2 |011\ket -2 |101\ket -2 |110\ket ,
\]
which we will see makes it Pfaffian, and hence tractable to compute with under planarity.  Note that the basis-changed version only makes sense in $\C R_A$  and not $R_A$.
\end{example}

\section{Tensor contraction circuits} \label{sec:tensorcontractioncircuits}
A circuit is a composition of predicates that computes something of interest. Given a collection of (co)gates (or predicates), we may diagram their contraction by means of a bipartite graph which shows which pairing to make.  
\begin{defn} \label{defn:tensconcirc}
A {\em tensor contraction circuit} (or simply {\em circuit}) $\Gamma$ is a 
combinatorial object consisting of 
\begin{itemize}
\item[(i)] a set $\scrP$ of tensor predicates with coefficients in a field $\FF$, the gadgets we may use to build the circuit, and 
\item[(ii)] a graph $\Gamma = (\text{Predicates}, \text{Edges})$ such that each predicate in Predicates is drawn from $\scrP$, each edge of a predicate corresponds to one of its vector spaces, and two predicates sharing an edge have dual vector spaces corresponding to that edge.
\end{itemize}
\noindent This $\Gamma$ represents the circuit architecture: which vector spaces are paired, including the assignment for non-symmetric predicates.
\end{defn}
The {\em value} $\val(\Gamma)$  of a circuit is the result of the indicated maximal tensor contraction.  A circuit is {\em closed} if the contraction is total: all vector spaces appearing are paired and the result is a field element (Figure \ref{fig:XORs}(b),(c)).  This field element is the ``answer'' to the corresponding $\#\mathsf{CSP}$ instance the circuit represents, computed over the field (more generally, we could use a ring or even semiring).
In the statistical setting, it is the partition function.  If there are unmatched wires or ``dangling edges,'' the circuit is {\em open} and its value is a tensor (Figure \ref{fig:XORs}(a)).  
In the statistical setting, this is a marginal distribution over a subset of variables, or the most likely assignment if working over the tropical semiring \cite{Pachter16112004}. 
Predicates in $\scrP$ (below, predicates of the form $\sPf \Xi$ or $\sPfd \Theta$), are called {\em simple}.  A {\em compound} gate or cogate is a partial circuit composed of predicates in $\scrP$, which represents a partial pairing but still has dangling edges, and for which the ality is that of a (co)gate respectively after the partial contraction is performed (Figure \ref{fig:XORSWAP}).  A general compound predicate can have mixed ality.  
\begin{observation}
In reasoning about the value of a closed circuit, we may treat compound predicates as black boxes: they will behave exactly as though there were an actual predicate in terms of their contribution to the circuit value.
\end{observation}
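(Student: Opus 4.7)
The plan is to reduce the observation to the associativity and commutativity of finite sums, by writing out a closed circuit's value as an explicit sum over edge assignments and showing it factors along any partition of the predicate set.

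First I would unpack the definition of $\val(\Gamma)$. A closed circuit $\Gamma$ has every edge paired, and maximal tensor contraction in an explicit basis expresses the value as
\[
\val(\Gamma) \;=\; \sum_{\sigma : E(\Gamma) \to \{0,1\}} \; \prod_{p \in \text{Predicates}(\Gamma)} p\bigl(\sigma|_p\bigr),
\]
where $\sigma$ assigns a basis index to each edge and $p(\sigma|_p)$ denotes the coefficient of the tensor $p$ on the induced basis element of $\bigotimes_{e \ni p} V^e$ selected by $\sigma$ restricted to the edges incident to $p$. The compatibility of primal/dual basis choices on the two endpoints of a shared edge (already guaranteed by Definition \ref{defn:tensconcirc}(ii) and Proposition \ref{prop:basischangepairing}) ensures this sum is well defined independent of the chosen basis.

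Second, let $C \subset \text{Predicates}(\Gamma)$ be the set of simple predicates making up the compound predicate in question. Partition the edges of $\Gamma$ into internal edges $E_{\text{int}}$ (both endpoints in $C$) and remaining edges $E_{\text{ext}}$. Then
\[
\val(\Gamma) = \sum_{\sigma_{\text{ext}} : E_{\text{ext}} \to \{0,1\}} \left( \sum_{\sigma_{\text{int}} : E_{\text{int}} \to \{0,1\}} \prod_{p \in C} p(\sigma|_p) \right) \left( \prod_{p \notin C} p(\sigma|_p) \right),
\]
by splitting $\sigma$ and distributing the product. The inner sum is, by definition, the tensor entry of the compound predicate obtained by contracting the members of $C$ along $E_{\text{int}}$, evaluated at the dangling edges (those half-edges of $C$ lying in $E_{\text{ext}}$). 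Substituting a single formal predicate with these same entries in place of $C$ produces a new circuit whose maximal contraction is precisely the outer sum. Hence the two circuit values agree.

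The verification is essentially bookkeeping, and the only possible obstacle is handling conventions for the ordering of tensor factors when ality-sensitive constructions (such as a Pfaffian ordering used later) are in play; at this stage, however, the observation is stated purely at the level of abstract tensor contraction, so multilinearity of the pairing $V \otimes V^{\vee} \to \FF$ and the finite-sum Fubini theorem suffice. No further content is needed, and the observation then licenses us, for the rest of the paper, to manipulate compound predicates freely as if they were atomic entries of $\scrP$.
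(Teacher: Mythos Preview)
Your argument is correct. The paper itself provides no proof of this observation at all; it is stated as a self-evident fact about tensor contraction and immediately used. Your write-up makes explicit the standard associativity argument the paper leaves implicit: express $\val(\Gamma)$ as a sum over basis assignments to edges, group the edges and predicates according to the compound sub-circuit $C$, and observe that the inner sum over internal-edge assignments is precisely the tensor entry of the partially contracted compound predicate. This is exactly the right justification, and your remark that ordering or ality conventions are irrelevant at this level (since only multilinearity of the pairing and finite distributivity are used) is also correct.
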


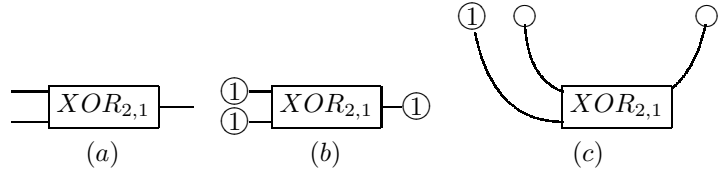
\begin{figure}
\[
\begin{array}{ccc}
\begin{xy}<10mm,0mm>:
(0,0) *+{XOR_{2,1}}*\frm{-};
  p+(1.2,0) **@{-},
(-.72,.2);   p+(-.5,0)  **@{-},
(-.72,-.2);   p+(-.5,0)  **@{-},
\end{xy}
& 
\begin{xy}<10mm,0mm>:
(0,0) *+{XOR_{2,1}}*\frm{-};
  p+(1.2,0)  *+{1}*\frm{o} **@{-},
(-.72,.2);   p+(-.5,0) *+{1}*\frm{o} **@{-},
(-.72,-.2);   p+(-.5,0)  *+{1}*\frm{o}  **@{-},
\end{xy}
&
\begin{xy}<10mm,0mm>:
(0,0) *+{XOR_{2,1}}*\frm{-};
  p+(1.2,1.2)  *+{\;}*\frm{o} **\crv{~*=\dir{.}p+(1,0)},
(-.72,.2);   p+(-.5,1) *+{\;}*\frm{o}  **\crv{~*=\dir{.}p+(-.5,.2)},
(-.72,-.2);   p+(-1.2,1.4)  *+{1}*\frm{o}  **\crv{~*=\dir{.}p+(-1,0)},
\end{xy}\cr 
(a)&(b)&(c)\cr
\end{array}
\]
\caption{Viewing $XOR_{2,1}$ as (a) an open circuit, logic gate, or function; on the left it ``accepts inputs'' and on the right ``outputs'' their sum modulo two.  After pairing all vector spaces, we have (b) a closed circuit whose value is 0 because the indicated assignment is not in the graph of the function.  In (c) we show the $\mathsf{\#CSP}$ interpretation; the value of this closed circuit is 2 because two of the four possible assignments to the open circles (representing summation), $\bra 0|\bra 1|$  and $\bra 1|\bra 0|$, yield elements of the function's graph (contract to 1).  That is, $\val(\Gamma)=(\bra 1|)( \bra 0| + \bra 1|) ( \bra 0| + \bra 1|) \cdot (XOR_{2,1}) = (\bra 100 + \bra 101| + \bra 110| + \bra 111|) \cdot (XOR_{2,1}) = 2$. } \label{fig:XORs}
\end{figure}
\begin{figure}
\[
\begin{array}{ccc}
\begin{xy}<10mm,0mm>:
(0,0);
  p+(0,-1) *+{XOR}*\frm{-} **@{-} ?>*\dir{>};
  p+(0,-.45);p+(0,-.45)  **@{-} ?>*\dir{>};
  p+(0,-.3)  *+{=_3}*\frm{o};
  p+(1.4,0)  **@{-} ?>*\dir{>},
  {p+(0,-.3);p+(0,-.8) *+{XOR}*\frm{-} **@{-} ?>*\dir{>}};
  p+(0,-.45);  p+(0,-.8) **@{-} ?>*\dir{>};
(2,0);
  p+(0,-.7)  **@{-} ?>*\dir{>};
  p+(0,-0.3) *+{=_3}*\frm{o};
  p+(-1.4,0)  **@{-} ?>*\dir{>},
  p+(0,-.25);p+(0,-.5)  **@{-} ?>*\dir{>};
  p+(0,-0.25) *+{XOR}*\frm{-};
  p+(0,-.8)  **@{-} ?>*\dir{>};
  p+(0,-0.3) *+{=_3}*\frm{o};
  p+(-1.4,0)  **@{-} ?>*\dir{>},
  p+(0,-0.3);p+(0,-.7)  **@{-} ?>*\dir{>},
\end{xy}
&\quad &
\begin{xy}<10mm,0mm>:
(0,0);
  p+(0,-1) *+{XOR}*\frm{-} **@{-};
  p+(0,-1.3)  *+{=_3}*\frm{o} **@{-};
  p+(1.5,0)  **@{-},
  p+(0,-1) *+{XOR}*\frm{-} **@{-};
  p+(0,-.8) **@{-};
  p+(-1.5,4) **\crv{~*=\dir{.}p+(-.5,-1)},
(2,0);
  p+(0,-1)  *+{=_3}*\frm{o}  **@{-} ;
  p+(-1.5,0)  **@{-},
  p+(0,-1.3)    *+{XOR}*\frm{-} **@{-};
  p+(0,-1.1)  *+{=_3}*\frm{o}  **@{-};
  p+(-1.5,0)  **@{-},
  p+(0,-.7)  **@{-};
  p+(1.5,4) **\crv{~*=\dir{.}p+(.5,-1)},
\end{xy}\cr
(a)&\quad &(b)
\end{array}
\]
\caption{In (a), a swap gate built of $XOR_{2,1}$ (or $EVO_3$) and $FANOUT_{1,2}$ (also called $=_3$).  Multiple  orientations of the edges produce valid functional interpretations on the circuit.  Reversible gates are a special case of the freedom to orient in any direction that makes each predicate act as a function.  In (b), arrows are removed and edges curved to the top to suggest a $\mathsf{\#CSP}$ interpretation.} \label{fig:XORSWAP}
\end{figure}
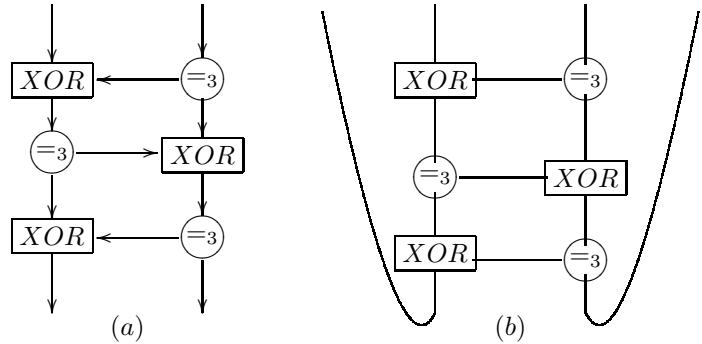

For example, to represent a \#3SAT problem, we could use an arity-$d$ all-equal ($AE^d$) cogate to represent each variable appearing in $d$ clauses, $OR_3$ gates to represent clauses, with an edge if a variable is included in a clause.  The value of such a circuit is the number of satisfying assignments.  If the tensor coefficients are not in the set $\{0,1\}$, it is called a weighted sum or weighted $\mathsf{\#CSP}$ problem and its value is a weighted sum or polynomial.  Note that even in the weighted setting, an open circuit may still have $\{0,1\}$ coefficients after the partial contraction is performed; indeed it can be worthwhile to engineer things this way to implement Boolean functions with  restricted sets of (possibly non-Boolean) predicates.

Open circuits are useful conceptually, and ``evaluate'' to tensors, i.e. multilinear transformations.  However, we primarily evaluate only closed Pfaffian circuits $\Gamma$, as these are the type which have a general polynomial time algorithm (Algorithm \ref{alg:hologeval}) to obtain $\val(\Gamma)$.   
An open circuit is a bit like a quantum computation before, and a closed circuit a quantum computation after, inputs and measurements have been made.

If the dangling edges of an open circuit can be partitioned into a set of $k$ inputs and $\ell$ outputs, such that every possible input in $\{0,1\}^k$ has a unique nonzero coefficient for some assignment of values in $\{0,1\}^{\ell}$ to the outputs, we can view the open circuit as representing a function; the open circuit's value is a tensor which is the formal sum of points in the graph of the function, $\{\sum |x\ket \ot f(|x\ket):x \in \{0,1\}^k \}$.  If the output assignment is not unique, one might interpret this as a nondeterministic function. This is useful in using an orientation to compute a circuit value as in Section \ref{sec:factorization}.  We can also consider orientations of the edges that mark inputs and outputs appropriately.  If we then make a partial assignment to the input and output wires, and sum over the rest, we can compute various things about the function so represented.  For example, suppose the circuit is a multiplication circuit, so the input is the bit representation of two integers and the output their product.  Fix the output to be a given integer.  Now trying a zero and one assignment to each bit of input and summing over the rest, and checking to see if the resulting closed circuit value is nonzero, we can  find a factors of the given integer by binary search.

The notion of circuit value  is inherently of the counting type, but can be specialized to answer decision, function, and \#function questions.  
Input, output, and \#function vs \#decision problems are just a matter of perspective on time flow through the circuit.  The fact that tensor contraction networks can be interpreted in more than one way depending on the orientation or flow of time chosen is familiar to physicists in the guise of the time-invariance of Feynman diagrams.

\subsection{Evaluating tensor circuits with factorization} \label{sec:factorization}
The sum-product algorithm \cite{Aji2000,Kschischang01factorgraphs}, which computes certain exponentially large sums of products in polynomial time,  is one of the most ubiquitous algorithms in applications.  It is a strategy for computing the value of a tensor circuit which has been rediscovered many times under different names. The sum-product algorithm is used in graphical models (belief propagation and junction tree), computer vision, hidden Markov models (forward-backward algorithm), genetics (Needleman-Wunsch), natural language processing, sensor networks \cite{Ihler2004}, turbo coding \cite{Kschischang01factorgraphs}, quantum chemistry, simulating quantum computing \cite{Markov2008}, SAT solving (survey propagation) and elsewhere.  
The sum-product algorithm applies to problems phrased in terms of requirements, clauses, or factors (here, ``gates'') on the one hand and variables (``cogates'') on the other, and for which the bipartite {\em factor graph} with an edge $(v,c)$ if variable $v$ appears in clause $c$ is a tree.  This factor graph is precisely a tensor contraction network or circuit as described previously.  It works over any semi-ring and exploits the tree structure to factor the problem, thus reducing a seemingly exponential problem to one which can be computed in polynomial time.  However, despite approximate generalizations \cite{Murphy99loopybelief}, the algorithm is fundamentally limited to the case where the underlying factor graph is a tree.

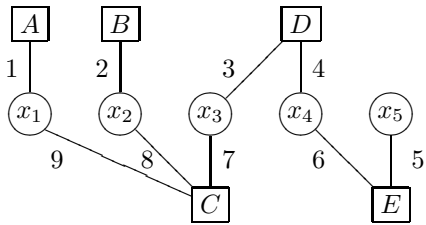
\begin{figure}[htb]
\[
\begin{xy}<12mm,0mm>:
(1,0) *+{x_1}*\frm{o};
p+(0,.25);p+(0,.75) *+{A}*\frm{-} **@{-},
(2,0) *+{x_2}*\frm{o};
p+(0,.25);p+(0,.75) *+{B}*\frm{-} **@{-},
(3,0) *+{x_3}*\frm{o};
p+(0,-.25);p+(0,-.75) *+{C}*\frm{-} **@{-};
p+(-.82,.82) **@{-},
p+(-1.82,.82) **@{-},
(4,0) *+{x_4}*\frm{o};
p+(0,.25);p+(0,.75) *+{D}*\frm{-} **@{-};
p+(-.82,-.82) **@{-},
(5,0) *+{x_5}*\frm{o};
p+(0,-.24);p+(0,-.76) *+{E}*\frm{-} **@{-};
p+(-.825,.825) **@{-},
(.8,.5) *+{1};
(1.8,.5) *+{2};
(3.2,.5) *+{3};
(4.2,.5) *+{4};
(1.3,-.5) *+{9};
(2.3,-.5) *+{8};
(3.2,-.5) *+{7};
(4.2,-.5) *+{6};
(5.3,-.5) *+{5};
\end{xy}
\]
\caption{Kschischang et~al.'s factor graph  \cite[Fig. 1]{Kschischang01factorgraphs}.  Each variable $x_i$ corresponds to the Boolean cogate tensor $AE^d$ where $d\in\{1,2\}$ is the degree and each factor A-E to a gate tensor with arbitrary coefficients.   Choosing a root vertex yields a factorization of the tensor contraction for efficiently computing the value of the open circuit created by omitting that vertex.}
\label{fig:Kschischang} 
\end{figure}
Let $\Gamma=(V,C,E)$ be a tree.  Then the sum-product algorithm is the factorization of the total and partial contractions achieved by rooting at a vertex and factoring accordingly.  Explicitly, we write the tree in Cayley format in terms of nested angle brackets, keeping the variables (cogates) on the left and tensoring over the leaves at each level.  For Kschischang et~al.'s example factor graph in Figure \ref{fig:Kschischang}, the total contraction factored by rooting at $x_1$ is the scalar
\[
\val(\Gamma)=\langle x_1, A \ot \langle x_2, B \ot \langle x_3, C \ot \langle x_4, D \rangle \ot \langle x_5, E \rangle \rangle \rangle \rangle.
\]
Partial contraction of all but $x_1$ yields a marginal function of $x_1$ as the following element of $V^{1} \ot V^{9}$:
\[
\val(\Gamma')=A \ot \langle x_2, B \ot \langle x_3, C \ot \langle x_4, D \rangle \ot \langle x_5, E \rangle \rangle \rangle.
\]
This is  a tensor-valued partial contraction, the value of the open circuit $\Gamma'$ formed by removing the node labeled $x_1$ from $\Gamma$, so that edges 1 and 9 are dangling.

The sum-product algorithm, particularly in its junction-tree \cite{lauritzen1988local} guise,  is useful in computing the value of circuits with small treewidth.  It has been used to simulate quantum computations \cite{Markov2008}.  Note that one strength of the sum-product algorithm over holographic algorithms is that it requires the tensor coefficients only to lie in a semiring, whereas holographic algorithms require ring coefficients as they exploit cancellation.  We will limit ourselves further to coefficients in a field, typically $\C$.

\section{Pfaffian circuits and their evaluation}\label{sec:HoloCircuits}

In \cite{MR2120307,QCtcbSiPT,ValiantFOCS2004} L. Valiant introduced a  different approach to the same sort of problem (an exponentially large sum of products diagrammed by a factor graph).  Using  
{\it matchgates} and  {\it holographic algorithms}, he proved the existence of polynomial time algorithms for  counting and sum-of-products problems that na\"\i vely appear to have  exponential complexity.
Such algorithms have been studied in depth and further developed  by J. Cai et al. \cite{MR2277247,MR2402465,MR2424719}.
In particular they are able to provide exact, polynomial time solutions to problems with non-tree graphs but are subject to other, somewhat opaque algebraic restrictions.  Holographic algorithms apply if the coordinates of the predicates involved, expressed as tensors, satisfy a collection of polynomial equations called {\it matchgate identities}, generally after a change of basis on the underlying vector spaces (one space per edge). Currently the same change of basis has always been applied to each edge, so simply allowing non-global basis change  may provide additional power.

Each edge of a closed tensor circuit represents a pairing of a vector space and its dual.  Performing a change of basis $B$ along an edge of a tensor circuit means applying the transformation $B$ to the primal and $B^\vee$ to the dual, which does not affect the value of the circuit (Proposition \ref{prop:basischangepairing}).  We may view the result as a new circuit in the standard basis, with the same graph but different predicates, and the same value as the original circuit.  This provides an alternative method to find {\em holographic} reductions \cite{ValiantFOCS2004} from one problem to another: rather than reason about gadgets and changes to the graph, one can just write down the desired circuit, perform a change of basis on each edge, and obtain a new circuit which has exactly the same value but a different set of predicates.  

One reason to perform changes of basis is that the new set may often have a sparsity pattern, or satisfy other conditions that make evaluation easier.  For example, it might now be possible to separate an arity-four predicates into two arity-two predicates, allowing a better factorization.  The improvement of interest to us, however, is that the new predicates can sometimes be made to satisfy the matchgate identities.  When this is possible, and the graph is planar, circuit evaluation can be accomplished very efficiently.

A Pfaffian circuit, open or closed, is a tensor contraction circuit: a collection of Pfaffian gates and cogates arranged in a bipartite graph, e.g. to represent a Boolean satisfiability problem or a circuit computing a function.  There are restrictions on both the predicates allowed (which must be Pfaffian) and the ways in which they may be composed (planarity and matching bases).   Because of the relationship of Pfaffian circuits to Pfaffians, their closed circuit values (full tensor contractions) can be computed efficiently.

\subsection{Pfaffians and subPfaffians}
The Pfaffian of an  $n \times n$ skew-symmetric matrix $\Xi$ is zero if $n$ is odd, one if $n=0$, and for $n>0$ even is the quantity 
\[
\Pf(\Xi) = \sum_{\sigma}\sign(\pi) \xi_{\sigma(1),\sigma(2)}\xi_{\sigma(3),\sigma(4)} \cdots \xi_{\sigma({n-1}),\sigma(n)}
\]
where the sum is over permutations where $\sigma_1<\sigma_2,\sigma_3<\sigma_3, \dots, \sigma_{n-1}<\sigma_n$, and $\sigma_1<\sigma_3<\sigma_5 <\cdots \sigma_{n-1}$. Careful consideration of the special structure of a Pfaffian corresponding to a planar graph can yield an $O(n^{\omega_p})$ algorithm, where the order of planar Pfaffian evaluation $\omega_p$ depends on the the ring and is generally between $1.19$ and $3$; see the discussion after Algorithm \ref{alg:hologeval}.  
If $\Xi$ is empty, $2 \by 2$,  $4 \by 4$, or  $6 \by 6$, the Pfaffian is 1, $\xi_{12}$, $\xi_{12}\xi_{34} - \xi_{13}\xi_{24}+\xi_{23}\xi_{14}$, and $\xi_{12}\Pf \Xi_{3456} - \xi_{13}\Pf \Xi_{2456} +\xi_{14}\Pf \Xi_{2356} - \xi_{15}\Pf \Xi_{2346} + \xi_{16}\Pf \Xi_{2345} $ respectively.

\begin{defn}
For $n \times n$ skew-symmetric matrices $\Xi$ and $\Theta$, define the tensors
\begin{equation} \label{def:eq:sPf1}
\sPf(\Xi)
= \sum_{I \subset[n]} \Pf( \Xi_I )| I \ket
\end{equation}
and 
\begin{equation} \label{def:eq:sPf2}
\sPfd(\Theta)
= \sum_{J \subset[n]} \Pf( \Theta_{J^C} )\bra J | 
\end{equation}
\end{defn}
\noindent where $|I\ket$ denotes the bitstring tensor which is the indicator function for $I$ and $\Xi_I$ is the submatrix of $\Xi$ including only the rows and columns in $I$. 
These easy-to-represent $2^n$ dimensional tensors are the building blocks of holographic computations.  We can also define versions of (\ref{def:eq:sPf1}) and  (\ref{def:eq:sPf2}) in which the tensor bits, and corresponding rows and columns of the matrix, are explicitly labelled as in (\ref{eq:labelledbraket}).

\begin{defn} \label{defn:holographicpredicate}
A gate $G$ (resp. cogate $J$) with $n$ edges is {\em Pfaffian} over a field $\FF$ if there exists an $n \times n$ skew-symmetric  matrix $\Xi$ (resp. $\Theta$) over $\FF$ and a field element $\alpha$ (resp. $\beta$) such that $G=\alpha \sPf \Xi$ (resp.  $J=\beta \sPfd \Theta$).
\end{defn}
 Note that for nonsymmetric predicates, permuting the edges does not affect whether or not the predicate is Pfaffian.  Which edges correspond to which bits of the gate is part of the data of the circuit.

Now we consider how to form the direct sum of labelled matrices and the tensor product of labelled tensors.  Consider  a set $\mcM$ of matrices, each of which has its rows and columns labeled by a different subset of the numbers $[n]=\{1, \dots, n\}$, such that the label sets partition $[n]$.  Suppose $\Xi, \Xi' \in \mcM$ with label sets $I,J \subset [n], I \cap J = \emptyset$.  Define $\Xi \oplus \Xi'$ to be the direct sum of $\Xi$ and $\Xi'$, i.e. the matrix $\Xi''$ with label set $I \union J$ which has $\xi''_{k\ell}=0$ if one of $k,\ell$ is in $I$ and one in $J$, and otherwise the entry is whatever it would have been in $\Xi$ or $\Xi'$, e.g. $\xi''_{k\ell}=\xi'_{k\ell}$ if $\{k,\ell\}\subset I$.   
\[
\bordermatrix{
&2& 5 &7\cr
2&\xi_{22}&\xi_{25}&\xi_{27}\cr
5&\xi_{52}&\xi_{55}&\xi_{57}\cr
7&\xi_{72}&\xi_{75}&\xi_{77}\cr
}\oplus
\bordermatrix{
&3& 4 &9\cr
3&\xi_{33}&\xi_{34}&\xi_{39}\cr
4&\xi_{43}&\xi_{44}&\xi_{49}\cr
9&\xi_{93}&\xi_{94}&\xi_{99}\cr
} = 
\bordermatrix{
&2&3&4&5&7&9\cr
2&\xi_{22}&0&0&\xi_{25}&\xi_{27}&0\cr
3&0&\xi_{33}&\xi_{34}&0&0&\xi_{39}\cr
4&0&\xi_{43}&\xi_{44}&0&0&\xi_{49}\cr
5&\xi_{52}&0&0&\xi_{55}&\xi_{57}&0\cr
7&\xi_{72}&0&0&\xi_{75}&\xi_{77}&0\cr
9&0&\xi_{93}&\xi_{94}&0&0&\xi_{99}\cr
}
\]
Similarly if $T=\alpha |x_2 x_5 x_7 \ket$ and $T'=\beta |x_3 x_4 x_{9} \ket$ are two tensors with labeled bits, define $T \ot T'$ to be $\alpha\beta |x_2 x_3 x_4 x_5 x_7  x_{9} \ket$ and extend linearly to general tensors.

Now suppose $\Gamma = (\text{Gates},\text{Cogates}, \text{Edges})$ is a combinatorial connected planar bipartite graph with $n$ edges. 
A graph is bipartite iff it does not contain an odd cycle.  Thus all of the regions defined by a planar embedding of a planar bipartite graph have an even number of edges.  Therefore every vertex in the dual of $\Gamma$ has even degree, and there exists a non-self-intersecting Eulerian cycle in the dual.  This cycle corresponds to a oriented closed planar curve through $\Gamma$ crossing  each edge of $\Gamma$ exactly once; oriented, it defines an ordering or labelling of the edges by $\{1, \dots, n\}$ where the $r$th edge crossed gets the label $r$.  The corresponding closed curve separates the cogates from the gates, and induces a clockwise or counterclockwise cyclic (but not necessarily consecutive) edge order on the edges incident on any vertex.  Both embedding and curve can be computed in $O(n)$ time.  A proof that such a curve produces a ``valid'' edge order appears in \cite{HAWMG}.  We now give a new proof of this result, 
specializing to a particular curve in order to simplify the argument. 

 Given $\Gamma$ and a planar embedding, define a new graph $\Gamma'$ whose vertices are the gates only.  If $m$ gates are incident to an interior face of $\Gamma$, include the edges connecting them in a $m$-cycle around the region. See Figure \ref{fig:recursivecurve} for an illustration.  The resulting graph $\Gamma'$ is planar; take a spanning tree.  Choose a single exterior gate and connect it with a line to a circle drawn in the plane to contain $\Gamma'$.  Now thicken the edges, circle, connecting line, and vertices representing the gates, and orient the boundary of the resulting region.  This defines a closed curve crossing each edge of $\Gamma$ exactly once.  Call the result a spanning tree order on the edges of $\Gamma$.

We claim that such a curve and order could also have arisen if we had begun with the cogates instead and performed the same construction.  By construction, the gate side of the curve is connected, and so is its complement in the plane.  In other words,  any cycle among the cogates is contractable to a path in the spanning tree.  Were it not, then we would have a gate which is isolated from the rest.  The exterior cycle among the cogates is prevented by the connection to the exterior circle.

The curve construction is depicted in Figure \ref{fig:recursivecurve}.  Another feature of this particular curve is that if a gate node is replaced by a compound gate, we can just invaginate the curve between two edges of the gate and extend the construction recursively (Figure \ref{fig:recursivecurve2}).

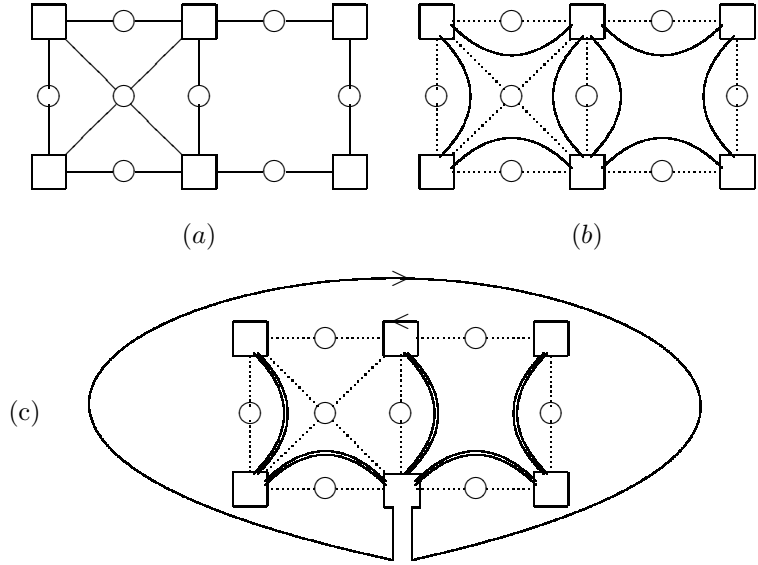
\begin{figure}

\[
\begin{array}{ccc}
\begin{xy}<10mm,0mm>:
(0,0) *+{\phantom{I}\,}*\frm{-};
  p+(1,-1) *+{\;}*\frm{o} **@{-},
p+(1,0) *+{\;}*\frm{o} **@{-};
p+(1,0) *+{\phantom{I}\,}*\frm{-} **@{-};
  p+(0,-.83)  **@{-},
  p+(-.9,-.9)  **@{-},
p+(1,0) *+{\;}*\frm{o} **@{-};
p+(1,0) *+{\phantom{I}\,}*\frm{-} **@{-};
(0,-.25);
p+(0,-.75) *+{\;}*\frm{o} **@{-};
p+(0,-1) *+{\phantom{I}\,}*\frm{-} **@{-};
  p+(.9,.9)  **@{-},
p+(1,0) *+{\;}*\frm{o} **@{-};
p+(1,0) *+{\phantom{I}\,}*\frm{-} **@{-};
  p+(-.9,.9)  **@{-},
p+(0,1) *+{\;}*\frm{o} **@{-},
p+(1,0) *+{\;}*\frm{o} **@{-};
p+(1,0) *+{\phantom{I}\,}*\frm{-} **@{-};
p+(0,1) *+{\;}*\frm{o} **@{-};
 p+(0,.75)  **@{-};
\end{xy}
&&
\begin{xy}<10mm,0mm>:
(0,0) *+{\phantom{I}\,}*\frm{-};
  p+(1,-1) *+{\;}*\frm{o} **@{.},
p+(1,0) *+{\;}*\frm{o} **@{.};
p+(1,0) *+{\phantom{I}\,}*\frm{-} **@{.};
  p+(0,-.83)  **@{.},
  p+(-.9,-.9)  **@{.},
p+(1,0) *+{\;}*\frm{o} **@{.};
p+(1,0) *+{\phantom{I}\,}*\frm{-} **@{.};
(0,-.25);
p+(0,-.75) *+{\;}*\frm{o} **@{.};
p+(0,-1) *+{\phantom{I}\,}*\frm{-} **@{.};
  p+(.9,.9)  **@{.},
p+(1,0) *+{\;}*\frm{o} **@{.};
p+(1,0) *+{\phantom{I}\,}*\frm{-} **@{.};
  p+(-.9,.9)  **@{.},
p+(0,1) *+{\;}*\frm{o} **@{.},
p+(1,0) *+{\;}*\frm{o} **@{.};
p+(1,0) *+{\phantom{I}\,}*\frm{-} **@{.};
p+(0,1) *+{\;}*\frm{o} **@{.};
 p+(0,.75)  **@{.};
(.2,-.05);
  p+(1.6,0) **\crv{~*=\dir{.}p+(.8,-.8)},
(.05,-.2);
  p+(0,-1.6) **\crv{~*=\dir{.}p+(.8,-.8)},
(2.2,-.05);
  p+(1.6,0) **\crv{~*=\dir{.}p+(.8,-.8)},
(2.05,-.2);
  p+(0,-1.6) **\crv{~*=\dir{.}p+(.8,-.8)},
(1.95,-.2);
  p+(0,-1.6) **\crv{~*=\dir{.}p+(-.8,-.8)},
(3.95,-.2);
  p+(0,-1.6) **\crv{~*=\dir{.}p+(-.8,-.8)},
(.2,-1.95);
  p+(1.6,0) **\crv{~*=\dir{.}p+(.8,.8)},
(2.2,-1.95);
  p+(1.6,0) **\crv{~*=\dir{.}p+(.8,.8)},
\end{xy}\\
&&\\
(a)&&(b)\\
\end{array}
\]
\[
\begin{xy}<10mm,0mm>:
(0,0) *+{\phantom{I}\,}*\frm{-};
  p+(1,-1) *+{\;}*\frm{o} **@{.},
p+(1,0) *+{\;}*\frm{o} **@{.};
p+(1,0) *+{\phantom{I}\,}*\frm{-} **@{.};
  p+(0,-.83)  **@{.},
  p+(-.9,-.9)  **@{.},
p+(1,0) *+{\;}*\frm{o} **@{.};
p+(1,0) *+{\phantom{I}\,}*\frm{-} **@{.};
(0,-.25);
p+(0,-.75) *+{\;}*\frm{o} **@{.};
p+(0,-1) *+{\phantom{I}\,}*\frm{-} **@{.};
  p+(.9,.9)  **@{.},
p+(1,0) *+{\;}*\frm{o} **@{.};
p+(1,0) *+{\phantom{I}\,} **@{.};
  p+(-.9,.9)  **@{.},
p+(0,1) *+{\;}*\frm{o} **@{.},
p+(1,0) *+{\;}*\frm{o} **@{.};
p+(1,0) *+{\phantom{I}\,}*\frm{-} **@{.};
p+(0,1) *+{\;}*\frm{o} **@{.};
 p+(0,.75)  **@{.};
(.05,-.2);
  p+(0,-1.6) **\crv{~*=\dir{.}p+(.8,-.8)},
(.1,-.2);
  p+(0,-1.6) **\crv{~*=\dir{.}p+(.8,-.8)},
(2.05,-.2);
  p+(0,-1.6) **\crv{~*=\dir{.}p+(.8,-.8)},
(2.1,-.2);
  p+(0,-1.6) **\crv{~*=\dir{.}p+(.8,-.8)},
(3.95,-.2);
  p+(0,-1.6) **\crv{~*=\dir{.}p+(-.8,-.8)},
(3.9,-.2);
  p+(0,-1.6) **\crv{~*=\dir{.}p+(-.8,-.8)},
(.2,-1.95);
  p+(1.6,0) **\crv{~*=\dir{.}p+(.8,.8)},
(.2,-1.9);
  p+(1.6,0) **\crv{~*=\dir{.}p+(.8,.8)},
(2.2,-1.95);
  p+(1.6,0) **\crv{~*=\dir{.}p+(.8,.8)},
(2.2,-1.9);
  p+(1.6,0) **\crv{~*=\dir{.}p+(.8,.8)},
(1.9,-2.25);
p+(0,-.7)  **\crv{~*=\dir{.}p+(0,-.7)};
p+(.25,0)  **\crv{~*=\dir{.}p+(-7,1.5)&p+(0,6)&p+(7,1.5)};
p+(0,.7)  **\crv{~*=\dir{.}p+(0,.7)};
(2,.8) *{>};
(2,.22) *{<};
(-3,-1) *+{\text{(c)}};
(1.9,-2.25);
p+(-.12,0) **@{-};
p+(0,.45) **@{-};
p+(.48,0) **@{-};
p+(0,-.45) **@{-};
p+(-.12,0) **@{-};
\end{xy}
\]
\caption{Planar spanning tree curve construction: (a) the circuit, (b) the region graph on its gates, and (c) an oriented spanning tree curve.} \label{fig:recursivecurve}
\end{figure}

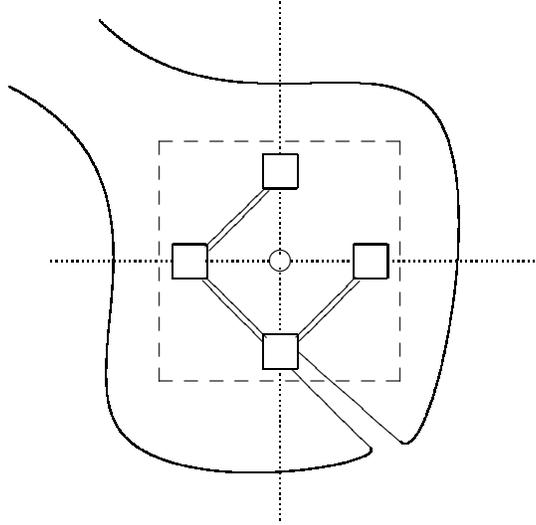
\begin{figure}
\[
\begin{xy}<8mm,0mm>:
(0,0);
p+(4,0) **@{--};
p+(0,-4) **@{--};
p+(-4,0) **@{--};
p+(0,4) **@{--};
(2,-2) *+{\;}*\frm{o};
  p+(-1.5,0) *+{\phantom{I}\,}*\frm{-} **@{.},
  p+(1.5,0) *+{\phantom{I}\,}*\frm{-} **@{.},
  p+(0,1.5) *+{\phantom{I}\,}*\frm{-} **@{.},
  p+(0,-1.5) *+{\phantom{I}\,}*\frm{-} **@{.},
(.2,-2); p+(-2,0) **@{.},
(3.8,-2); p+(2.5,0) **@{.},
(2,-3.8); p+(0,-2.5) **@{.},
(2,-.2); p+(0,2.5) **@{.},
(-1,2);
  p+(5,-7)  **\crv{~*=\dir{.}p+(2,-2)&p+(7.5,1.5)&p+(5.5,-7.5)};
  p+(-1.7,1.5)  **@{-};
  p+(0,.2);
  p+(1,1)  **@{=},
  p+(-.55,0);
  p+(-1,1)  **@{=};
  p+(.05,.5);
  p+(1,1)  **@{=},
(3.5,-5.1);
  p+(-1.3,1.3)  **@{-},
  p+(-6,6)  **\crv{~*=\dir{.}p+(.3,-.3)&p+(-7,-1.5)&p+(-3,4.5)};
\end{xy}
\]
\caption{Recursive view of the planar spanning tree.  The dashed line is a compound gate.} \label{fig:recursivecurve2}
\end{figure}

\begin{lemma}\label{lem:pairwiseotimeoplus}
 Suppose the edge labelling comes from a spanning tree  order. Let $G_1=\sPf \Xi^1$, $G_2=\sPf \Xi^2$ be edge-labelled Pfaffian gates  connected in this spanning tree (and so by a cogate) in a closed circuit $\Gamma$.  $G_1$ and $G_2$ have disjoint label sets $I_1,I_2 \subset [n]$ respectively.   Then $G_1 \ot G_2 = \sPf ( \Xi^1 \oplus  \Xi^2 )$ as an identity of edge-labelled tensors.
\end{lemma}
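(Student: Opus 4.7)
The plan is to verify the identity coefficient by coefficient in the basis of edge-labelled bitstrings. Expanding the definitions of $\sPf$ and tensor product gives
\[
G_1 \otimes G_2 = \sum_{J_1 \subset I_1,\ J_2 \subset I_2} \Pf(\Xi^1_{J_1}) \Pf(\Xi^2_{J_2}) |J_1 \cup J_2\rangle
\]
and
\[
\sPf(\Xi^1 \oplus \Xi^2) = \sum_{J \subset I_1 \cup I_2} \Pf\bigl((\Xi^1 \oplus \Xi^2)_J\bigr) |J\rangle.
\]
Since each $J \subset I_1 \cup I_2$ decomposes uniquely as $J_1 \sqcup J_2$ with $J_i = J \cap I_i$, the lemma reduces to the scalar identity $\Pf\bigl((\Xi^1 \oplus \Xi^2)_J\bigr) = \Pf(\Xi^1_{J_1}) \Pf(\Xi^2_{J_2})$ for every such $J$. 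Pfaffians of odd-sized matrices vanish, so both sides are automatically zero unless $|J_1|$ and $|J_2|$ are both even, and I assume this henceforth.

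Next I reduce to a sign computation. The matrix $(\Xi^1 \oplus \Xi^2)_J$ is block-diagonal with diagonal blocks $\Xi^1_{J_1}$ and $\Xi^2_{J_2}$, but its rows and columns are indexed by $J$ in the inherited $[n]$-order, which typically interleaves elements of $J_1$ with elements of $J_2$. Only matchings internal to a block contribute nonzero terms in the Pfaffian expansion, and tracking the permutation sign gives
\[
\Pf\bigl((\Xi^1 \oplus \Xi^2)_J\bigr) = \sign(\sigma_J) \cdot \Pf(\Xi^1_{J_1}) \Pf(\Xi^2_{J_2}),
\]
where $\sigma_J$ is the shuffle permutation that re-sorts the concatenation of $J_1$ followed by $J_2$ (each in its inherited $[n]$-order) into the $[n]$-order on $J$. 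Its sign equals $(-1)^N$, where $N = \#\{(a,b) \in J_1 \times J_2 : a > b\}$, so it suffices to prove that $N$ is even.

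The heart of the argument uses the spanning tree curve. The thick spanning tree edge joining $G_1$ and $G_2$ separates the thickened spanning tree region into a $G_1$-side and a $G_2$-side, and the closed boundary curve therefore traverses the entire $G_2$-side as a single contiguous cyclic arc that contains every label in $I_2$ (along with labels of any other gates in the $G_2$-subtree) and no label in $I_1$. In the resulting linear $[n]$-order, either this arc is contiguous and flanked by $I_1$ on both sides, or it wraps across position $n$ with $I_1$ contiguous in the middle. In the first case every $a \in J_1$ lying past the arc contributes exactly $|J_2|$ inversions with $J_2$, so $N$ is a multiple of $|J_2|$; in the second case, symmetrically, $N$ is a multiple of $|J_1|$. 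Since $|J_1|$ and $|J_2|$ are both even, $N$ is even and $\sign(\sigma_J) = +1$, finishing the proof. The hardest part will be justifying the structural claim that the $G_2$-side is a single contiguous cyclic arc, but this follows directly from the Euler-tour description of the boundary shown in Figure \ref{fig:recursivecurve}(c): removing the thick $G_1$-$G_2$ edge disconnects the thickened region, forcing the boundary curve to traverse the two components consecutively.
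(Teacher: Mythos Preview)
Your proof is correct and follows essentially the same approach as the paper: both reduce to the coefficient-wise identity $\Pf((\Xi^1\oplus\Xi^2)_J)=\Pf(\Xi^1_{J_1})\Pf(\Xi^2_{J_2})$, dispose of odd-parity cases, and then use the spanning-tree curve to show that the labels of one gate form a cyclically contiguous block relative to the other, making the shuffle permutation even because $|J_2|$ (or $|J_1|$) is even. One small expositional point: your sentence ``both sides are automatically zero unless $|J_1|$ and $|J_2|$ are both even'' is not quite justified for the right-hand side when $|J_1|,|J_2|$ are both odd (then $|J|$ is even), but your next observation that only block-internal matchings contribute already handles this, exactly as the paper does.
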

\begin{proof}
We have 
\begin{eqnarray*}
G_1 \ot G_2 &=&  \sPf(\Xi^1) \ot \sPf(\Xi^2)\\
&=& \left ( \sum_{J_1 \subset I_1} \Pf( \Xi^1_{J_1} ) | J_1 \ket \right )  \left ( \sum_{J_2\subset I_2} \Pf( \Xi^2_{J_2} ) | J_2 \ket \right ) \\
&=& \sum_{J_1 \subset I_1, J_2 \subset I_2} \Pf( \Xi^1_{J_1} )\Pf( \Xi^2_{J_2} ) | J_1 \cup J_2 \ket  \\
&=& \sum_{K \subset I_1 \cup I_2} \Pf( \Xi^1_{K \cap {I_1}} )\Pf( \Xi^2_{K\cap {I_2}} ) | K \ket  \\
\end{eqnarray*}
whereas
\[
\sPf( \Xi^1 \oplus \Xi^2 ) =  \sum_{K \subset I_1 \cup I_2} \Pf( (\Xi^1\oplus \Xi^2)_K ) | K \ket 
\]
so it is sufficient to show that for all $K \subset I_1 \cup I_2$ the coefficients are equal,
\begin{equation} \label{eq:pairwiseotoplus}
\Pf(\Xi^1_{K\cap{I_1}}) \Pf(\Xi^1_{K \cap {I_1}}) = \Pf( (\Xi^1\oplus \Xi^2)_K ).
\end{equation}
This is a slight generalization of the fact  that for skew-symmetric matrices $M,M'$ with rows and columns labelled {\em consecutively} where $1, \dots, k$ label $M$ and $k+1, \dots, \ell$ label $M'$, then 
\begin{equation} \label{eq:blockdiagonal}
\sPf(M) \ot \sPf(M') = \sPf (M \oplus M')
=\sPf
\begin{pmatrix}
M & 0 \cr
0 & M' \cr
\end{pmatrix},
\end{equation}
so we reduce to this case.  If $K$ is completely contained in $I_1$, both sides of Equation (\ref{eq:pairwiseotoplus}) equal $\Pf(\Xi^1_K)$ and similarly if $K\subset I_2$.  Suppose $K$ has nontrivial intersection with both label sets.  If $I_1 \cap K$ is even and $I_2 \cap K$ is odd or vice versa, both sides of  Equation (\ref{eq:pairwiseotoplus})  are zero.  If $I_1 \cap K$ and $I_2 \cap K$ are both odd, the left hand side is zero but we must show the right hand side is also zero.  
But
The matrix  $\Xi^1\oplus \Xi^2$ may be written as a block-diagonal matrix by applying the same permutation to the rows and columns. Thus the left hand side and right hand side of  (\ref{eq:pairwiseotoplus}) are equal up to the sign of this permutation, and the right hand side is also zero.

Now suppose  $I_1 \cap K$ and $I_2 \cap K$ are both even.  We would like to reduce to the block diagonal case (\ref{eq:blockdiagonal}), i.e. the matrix with labelling $i^1_1 ,i^1_2 , \dots ,i^1_{|I_1|}, i^2_1 ,i^2_2 , \dots ,i^2_{|I_1|} $ where $i^1_1 <i^1_2 < \cdots <i^1_{|I_1|}$ and $i^2_1 <i^2_2 < \cdots <i^2_{|I_2|} $.  We claim that the permutation connecting the interleaved curve order $j_1, \dots j_n$ of $\Xi^1\oplus \Xi^2$ to the block diagonal order is even.  This is where we will need the curve order.

Draw a small curve separating the two gates and the cogate connecting them, with the small curve crossing each edge to the outside world (Figure \ref{fig:recursivecurve} (e)).  The big planar spanning tree curve enters and leaves this circle in the same ``wedge'' between two edges, i.e.\ across a face of $\Gamma'$.  Thus we may assume the order induced on the edges of the gates we are considering  is the lower part of the $G_1$ indices in (as consecutive as possible) ascending order, followed by all the $G_2$ indices in ascending (as consecutive as possible) order, followed by the remainder of the $G_1$ indices in order.  Since $G_2$ has an even number of edges, the permutation required to move the top half of the $G_1$ indices past them to join the lower half (so the indices are in block diagonal order) is even. 
\end{proof}

When joining compound gates, the argument is unchanged because of the recursive nature of the spanning tree order (Figure \ref{fig:recursivecurve} (d)).  Thus the Lemma shows that we may join two compound gates and maintain the correct Pfaffian value, and inductively join all the gates.  The critical part of the following result is that the same edge order works for both gates and cogates.
\begin{theorem} \label{thm:otimesoplus}
Let  $\Gamma$ be a connected bipartite embedded planar graph with the edge order induced by a planar  spanning tree curve, and let $\{G_i\}$ be the collection of edge labelled Pfaffian gates, $G_i =\sPf \Xi_i$.  Then $\ot_i G_i= \oplus_i \Xi_i$.  The analogous result holds for cogates with the same curve order.
\end{theorem}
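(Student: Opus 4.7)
The plan is to bootstrap Lemma \ref{lem:pairwiseotimeoplus} into the full statement by iterated pairwise merging along the spanning tree of gates, using the recursive nature of the spanning tree curve to guarantee that the hypotheses of the lemma persist at each stage. Fix the spanning tree $T$ on the gates used to construct the planar curve, and pick a leaf gate $G_i$, which is joined along a tree edge (hence through a cogate) to a unique other gate $G_j$. Lemma \ref{lem:pairwiseotimeoplus} immediately gives $G_i \ot G_j = \sPf(\Xi_i \oplus \Xi_j)$ as an edge-labelled identity on their joint label set.

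The next step is to check that the merged object, viewed as a single compound gate in a reduced planar bipartite graph, inherits a spanning tree curve edge order on its remaining edges that agrees with the restriction of the original order. This is exactly the invagination property illustrated in Figure \ref{fig:recursivecurve2}: when $G_i$ is absorbed into $G_j$, the original planar curve can be re-routed around the merged box without changing the cyclic sequence in which it crosses the surviving edges, and the contracted tree is still a spanning tree of the reduced region graph. Because Lemma \ref{lem:pairwiseotimeoplus} is stated for Pfaffian gates, simple or compound, it applies again to the new leaf pair in the reduced graph, and the induction proceeds. After $|\{G_i\}|-1$ iterations we obtain $\ot_i G_i = \sPf(\oplus_i \Xi_i)$ with rows and columns labelled by $\{1,\dots,n\}$ in the original curve order on $\Gamma$.

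For the cogate half of the theorem, the decisive input is the symmetry of the spanning-tree-curve construction noted just before Lemma \ref{lem:pairwiseotimeoplus}: the gate side of the curve and its complement are both connected, so the same curve (and hence the same edge order) could have been produced by running the construction on the cogates instead. Consequently, this edge order is also a spanning tree curve order for a spanning tree of cogates, and the entirely analogous inductive merging argument yields $\ot_i J_i = \sPfd(\oplus_i \Theta_i)$ in the matching edge order. The main obstacle is the bookkeeping for the edge order under merging — namely, verifying that contracting a tree edge and fusing two adjacent gates into one compound gate really does leave the curve order on the remaining edges intact, so that Lemma \ref{lem:pairwiseotimeoplus} can be invoked at the next step. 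Once the invagination picture is accepted, everything else (the block-diagonal Pfaffian identity and the even-permutation sign count) is already handled inside the pairwise lemma, and the induction is routine.
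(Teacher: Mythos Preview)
Your proposal is correct and mirrors the paper's own argument almost exactly: the paper proves Theorem~\ref{thm:otimesoplus} by the same induction, merging gates pairwise along the spanning tree via Lemma~\ref{lem:pairwiseotimeoplus}, invoking the recursive (invagination) property of the spanning tree curve so that the lemma continues to apply to compound gates, and then appealing to the gate/cogate symmetry of the curve construction for the cogate half. Your write-up is in fact slightly more explicit about the leaf-by-leaf induction than the paper's one-paragraph sketch.
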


The total contraction of the dual gate and cogate sub-Pfaffian tensors can be computed efficiently.  The following result follows directly from one which appears in \cite[p. 110]{MR1069389}.
\begin{theorem}[Pfaffian kernel theorem] \label{thm:kernel}
Let  $\Theta$ and $\Xi$ be $n \times n$ skew-symmetric matrices with $n$ even. Let $\tilde{\Theta}$ be $\Theta$ with signs flipped in the checkerboard pattern which flips the diagonal.  Then the pairing
\begin{equation} \label{thm:kernel:eq}
\langle \sPfd \Theta, \sPf \Xi \rangle = \Pf( \tilde{\Theta} + \Xi).
\end{equation}
\end{theorem}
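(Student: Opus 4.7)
The plan is to expand both sides of (\ref{thm:kernel:eq}) as sums over subsets of $[n]$ and verify the coefficients match once signs are accounted for.

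The LHS is immediate from the definitions: since $\langle J \mid I\rangle=\delta_{I,J}$, the pairing collapses to
$$\langle \sPfd \Theta, \sPf \Xi \rangle \;=\; \sum_{I\subset[n]} \Pf(\Theta_{I^C})\,\Pf(\Xi_I),$$
with only even-sized $I$ (and hence even-sized $I^C$, since $n$ is even) contributing.

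For the RHS I would apply the classical multilinear expansion of the Pfaffian of a sum of skew-symmetric matrices. Writing $\Pf(\tilde\Theta+\Xi)$ as its sum over perfect matchings of $[n]$ and distributing each factor $(\tilde\Theta+\Xi)_{\pi(2k-1),\pi(2k)}$ over the two summands, each matching splits according to which edges are drawn from $\tilde\Theta$ and which from $\Xi$. Grouping by the vertex set $S\subset[n]$ covered by the $\tilde\Theta$-edges yields
$$\Pf(\tilde\Theta+\Xi) \;=\; \sum_{\substack{S\subset[n]\\ |S|\text{ even}}} (-1)^{\tau(S)}\,\Pf(\tilde\Theta_S)\,\Pf(\Xi_{S^C}),$$
where $(-1)^{\tau(S)}$ is the sign of the $(|S|,|S^C|)$-shuffle placing the elements of $S$ before those of $S^C$ while preserving internal order. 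Setting $I=S^C$ reduces the theorem to the single identity $(-1)^{\tau(S)}\Pf(\tilde\Theta_S)=\Pf(\Theta_S)$ for every even $S\subset[n]$.

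To establish this, observe that $\tilde\Theta_{ij}=-(-1)^{i+j}\Theta_{ij}$, so $\tilde\Theta_S=-D_S\Theta_S D_S$ where $D_S$ is the diagonal matrix with $(D_S)_{ii}=(-1)^i$ for $i\in S$. Using $\Pf(cM)=c^{m/2}\Pf(M)$ and $\Pf(DMD)=\det(D)\Pf(M)$ for $m\times m$ skew-symmetric $M$ and diagonal $D$ gives $\Pf(\tilde\Theta_S)=(-1)^{|S|/2+\sum_{i\in S}i}\Pf(\Theta_S)$. A direct inversion count for $S=\{i_1<\cdots<i_{2k}\}$ gives $\tau(S)\equiv\sum_a(i_a-a)\equiv\sum_{i\in S}i+k\pmod 2$, and since $|S|/2=k$ the two exponents agree modulo $2$, so the signs cancel and the identity holds.

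The main obstacle is precisely this parity bookkeeping: the role of the checkerboard modification $\tilde\Theta$ is to absorb exactly the shuffle sign coming from the Pfaffian-of-a-sum expansion, and one must verify this absorption is exact. Everything else is unwinding of definitions, so once the parity identity is in hand, summing the matched coefficients over all even $I\subset[n]$ finishes the proof. (A more conceptual alternative would identify $\sPf\Xi$ and $\sPfd\Theta$ with Gaussian states in a Grassmann algebra and recognize the pairing as a Berezin integral, but the combinatorial route above is self-contained given the preceding material.)
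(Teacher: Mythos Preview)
Your argument is correct. The expansion of $\Pf(\tilde\Theta+\Xi)$ by the matching/shuffle identity, the identification $\tilde\Theta_S=-D_S\Theta_S D_S$, and the parity count $\tau(S)\equiv |S|/2+\sum_{i\in S}i\pmod 2$ all check out, so the checkerboard flip does exactly absorb the shuffle sign and the two sides agree term by term.

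As for comparison with the paper: there is essentially nothing to compare against. The paper does not prove Theorem~\ref{thm:kernel}; it simply records that the statement ``follows directly from one which appears in \cite[p.~110]{MR1069389}'' and moves on. Your write-up is therefore strictly more informative than what the paper contains: it supplies a self-contained combinatorial proof using only the subPfaffian definitions already introduced, where the paper defers to an external source. The Grassmann/Berezin-integral remark you make at the end is closer in spirit to how such identities are often packaged in the literature the paper is citing, but the elementary sign-bookkeeping route you chose is entirely adequate here and has the advantage of not importing any additional machinery.
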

On the left hand side is an exponential sum, which can be over all ``inputs'', and on the right a quantity which can be computed with the complexity of Pfaffian evaluation.  This kernel allows us to compute the value of a closed Pfaffian circuit efficiently.  For example under certain conditions, we may evaluate  the sum over all inputs (for \#decision and \#function problems) in a way analogous to a quantum unitary operator acting on a superposed state.  Note that Valiant's Pfaffian Sum Theorem  \cite{QCtcbSiPT} is Theorem \ref{thm:kernel} with $\theta_{i<j}=\lambda_i \lambda_j$. 

A circuit was defined in Definition \ref{defn:tensconcirc} for general predicates.  We now add some restrictions to facilitate fast evaluation by Theorems \ref{thm:otimesoplus} and  \ref{thm:kernel}.
Every edge $e$ in $\Gamma$ corresponds to to a vector space $V^e$.  A {\em typed predicate} is a predicate together with a type labeling each edge, which indicates which basis change we make on that edge's vector space.

\begin{defn}
A {\em Pfaffian circuit} is a combinatorial object with 
\begin{itemize}
\item[(i)] a set $\scrG$ of typed gates and $\scrJ$ of typed cogates with coefficients in a field $\FF$, the gadgets we may use to build the circuit, and
\item[(ii)] a connected bipartite graph $\Gamma = (\text{Gates}, \text{Cogates}, \text{Edges})$ where each gate in Gates is drawn from $\scrG$ and each cogate from $\scrJ$, and where the gate and cogate incident on each edge agree on the edge type.  This represents the circuit architecture: which vector spaces are paired, including the assignment for non-symmetric predicates,
\end{itemize}
such that every typed gate and typed cogate is Pfaffian after the indicated change of basis, and $\Gamma$ is planar.
\end{defn}

The value of a closed Pfaffian circuit can be computed efficiently.

\begin{theorem}
The complexity of computing the value of a closed Pfaffian circuit over a field $\FF$ with graph $\Gamma$ is the complexity $O(n^{\omega_p})$ of computing a planar Pfaffian with graph $\Gamma$  over $\FF$.
\end{theorem}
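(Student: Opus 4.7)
The plan is to assemble the pieces already developed in the excerpt into a single reduction: change basis edge by edge, consolidate all gates into one $\sPf$ tensor and all cogates into one $\sPfd$ tensor via Theorem \ref{thm:otimesoplus}, and then apply Theorem \ref{thm:kernel} to convert the full contraction into a single planar Pfaffian evaluation.

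First, I would pre-process the circuit. Compute a planar embedding of $\Gamma$ in $O(n)$ time, extract a spanning tree of the gate-region graph $\Gamma'$ as in the construction before Lemma \ref{lem:pairwiseotimeoplus}, and trace the induced oriented spanning tree curve; this gives a canonical labelling of the $n$ edges of $\Gamma$ by $\{1,\dots,n\}$ in $O(n)$ time, and by the argument preceding Theorem \ref{thm:otimesoplus} the same ordering is valid whether viewed from the gate side or the cogate side. For each edge, apply the indicated typed basis change $B_e$ on the primal and $B_e^\vee$ on the dual; by Proposition \ref{prop:basischangepairing} the circuit value is unchanged, and after the change every gate has the form $\alpha_i \sPf \Xi^i$ and every cogate the form $\beta_j \sPfd \Theta^j$ by the definition of a Pfaffian circuit.

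Second, I would consolidate. By Theorem \ref{thm:otimesoplus} applied to the labelled gates,
\[
\bigotimes_i \alpha_i \sPf \Xi^i \;=\; \Big(\prod_i \alpha_i\Big)\, \sPf\!\Big(\bigoplus_i \Xi^i\Big) \;=\; \Big(\prod_i \alpha_i\Big)\, \sPf \Xi,
\]
where $\Xi$ is the $n\times n$ skew-symmetric matrix obtained by taking the block direct sum indexed by the spanning tree edge order. The analogous identity for the cogates produces $\bigl(\prod_j \beta_j\bigr)\, \sPfd \Theta$ for a second $n\times n$ skew-symmetric matrix $\Theta$. Forming $\Xi$ and $\Theta$ costs $O(n)$ time, since each edge contributes one row and one column and each matrix entry appears in exactly one block.

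Third, I would invoke Theorem \ref{thm:kernel} to conclude
\[
\val(\Gamma) \;=\; \Big(\prod_i \alpha_i\Big)\!\Big(\prod_j \beta_j\Big)\, \langle \sPfd \Theta,\, \sPf \Xi\rangle \;=\; \Big(\prod_i \alpha_i\Big)\!\Big(\prod_j \beta_j\Big)\, \Pf\bigl(\tilde{\Theta}+\Xi\bigr).
\]
The scalar prefactors are computed by a linear-time traversal over the predicates and edges of $\Gamma$. Thus the problem is reduced to evaluating the Pfaffian of a single $n\times n$ skew-symmetric matrix.

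The step that needs the most care is verifying that $\tilde{\Theta}+\Xi$ is indeed a \emph{planar} Pfaffian matrix, i.e.\ that its non-zero support graph admits the planar embedding required by the $O(n^{\omega_p})$ Pfaffian algorithm. Here $\Xi$ is supported on pairs of edges sharing a gate and $\Theta$ on pairs sharing a cogate, so the support graph of $\tilde{\Theta}+\Xi$ is exactly the ``edge adjacency'' graph of $\Gamma$ under the spanning tree curve ordering. The spanning tree curve was constructed so that the induced cyclic edge ordering around each gate and each cogate is consistent with a single planar embedding; drawing the $n$ indices in the curve order along a circle and the gate/cogate blocks as chords inside/outside produces a planar drawing of the support graph. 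Once planarity is confirmed, the overall cost is dominated by the planar Pfaffian evaluation, which is $O(n^{\omega_p})$ by hypothesis; the pre-processing, basis changes, block assembly, and checkerboard sign flip on $\Theta$ are all $O(n)$ or $O(n^2)$ and hence absorbed.
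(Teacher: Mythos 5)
Your reduction is essentially the paper's own proof: apply Theorem \ref{thm:otimesoplus} to consolidate the gates and cogates (in the spanning tree curve order), apply Theorem \ref{thm:kernel} to turn the total contraction into $\Pf(\tilde{\Theta}+\Xi)$, accumulate the scalars $\alpha_i,\beta_j$ into a prefactor $\gamma$, and observe that everything except the final Pfaffian is linear (or at worst quadratic) in $n$ --- this is exactly Algorithm 1 in the paper.

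One caveat on your last step: the claim that the support graph of $\tilde{\Theta}+\Xi$ is literally planar via the circle-and-chords drawing is overstated. Each predicate of arity $d$ contributes a full $d\times d$ skew block, i.e.\ a clique on its $d$ incident edges, so the support graph contains $K_5$ as soon as some predicate has arity $\geq 5$ (and even for arity $4$ the chord drawing itself has crossings). The paper does not need, and does not assert, literal planarity of this matrix graph: the complexity claim rests on reducing the circuit value to a single $n\times n$ Pfaffian, with the planarity of $\Gamma$ itself (bounded separators of the derived structure) being what the nested-dissection refinement of $\omega_p$ exploits; the generic $O(n^3)$ division-free evaluation applies regardless. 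So your core argument stands, but that verification paragraph should either be dropped or rephrased in terms of separator structure rather than planarity of the support graph.
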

\begin{proof}
Apply Theorems \ref{thm:otimesoplus} and \ref{thm:kernel}.  One must accumulate in a quantity $\gamma(\text{Gates},$ $\text{Cogates})$ the adjustment factors $\alpha$ and $\beta$ of Definition \ref{defn:holographicpredicate}, which thus depend only on the number and type of gates and cogates appearing in the closed Pfaffian circuit.  An explicit algorithm to obtain the value of a circuit is given in Algorithm \ref{alg:hologeval}; its complexity is dominated by evaluating the Pfaffian.  

\end{proof}

\begin{algorithm} \label{alg:hologeval}
$\;$\\
\noindent {\bf Input: } A closed Pfaffian circuit $\Gamma$ with an even number $n$ of edges.\\
\noindent {\bf Output: } The value $\val(\Gamma)$ of the circuit.
\smallskip
\begin{enumerate}
\item  If not given, find a planar embedding of $\Gamma$ ($O(n)$ time \cite{boyer_cutting_2004,hopcroft_efficient_1974,boyer_stop_2004}), and define a planar spanning tree edge order ($O(n)$).
\item Form $\Omega$, a skew-symmetric $n \times n$ matrix of zeros as follows.  Set $\gamma=1$.
  \begin{enumerate}
  \item For each Pfaffian gate $G$ with changes of basis transformation $T_G$ and $T_G(G)=\alpha_G \sPf(\Xi_G)$ (after basis change) with edges $I=\{i_1, \dots, i_d\}$ incident, let $\gamma \leftarrow \alpha_G\gamma$ and set the submatrix $\Omega_{I}=\Xi_G$.  
  \item For each Pfaffian cogate $J$ with changes of basis transformation $T_J$ and $T_J(J)=\beta_J \sPfd(\Theta_J)$  with edges $I=\{i_1, \dots, i_d\}$ incident, let $\gamma \leftarrow \beta_J\gamma$ and set $\Omega_{I}=\tilde{\Theta}_J$ with $\tilde{\theta}_{ij} = (-1)^{i+j+1} \theta_{ij}$.
  \end{enumerate}
\item  Output $\gamma  \Pf(\Omega)$.  $O(n^{\omega_p})$, the order of Pfaffian evaluation.
\end{enumerate}
\end{algorithm}

\smallskip

We have used $\omega_p$ for the order of Pfaffian evaluation because it can depend on the ring; we now collect a few results on what is known about $\omega_p$.  The evaluation of the Pfaffian can be accomplished without division over an integral domain in $O(n^3)$ \cite{Galbiati91}.  One can also exploit the planar sparsity structure of the graph to get a faster evaluation.  Avoiding writing down $\Omega$ explicitly, and using a nested dissection algorithm \cite{lipton_generalized_1979} for $\Pf(\Omega)$, the evaluation can be obtained up to sign in $O(n^{\omega/2})$, where $\omega$ is the exponent of matrix multiplication.  Over finite fields this smaller exponent can be achieved with randomization \cite{yuster2008matrix}.  A discussion of numerical issues can be found in Section 12 of \cite{ValiantFOCS2004}.

After all changes of basis in a closed circuit, we are left with a circuit in which all edges are in the standard basis.  The change of basis device is just a means of making non-obvious transformations of predicates with which it is natural to express the problem of interest (e.g. Boolean functions) into predicates which are generally non-Boolean, but whose composition represents the same counting problem.  Thus in the next section 
we first describe Pfaffian predicates in standard basis, where all holographic computations end up before the contraction is performed. 

\section{Pfaffian predicates in the standard basis} \label{sec:CBholographic}

In this section we study which predicates can be implemented as the subPfaffian of some matrix, or small contractions of a few predicates which can, and relate these to {\em spinor varieties}.  We first study the easiest case of Pfaffian gates: gates of even-weight support that are subPfaffians, initially described in \cite{HAWMG}.  This corresponds to the restriction of the even spinor to an affine open.  Then we study the affine odd case, and finally the homogenized, projective versions.  The first two are easy to characterize in terms of the image of a natural parameterization by subPfaffians and in terms of defining equations, and represent set-theoretic affine complete intersections.  The projective versions will require additional equations.  The results in this section mirror those for matchgate signatures in the standard basis (e.g. \cite{caisigpaper}).

\subsection{Affine spinor varieties and matchgates}
It is easy to characterize which tensors can be written as the subPfaffian tensor $\sPf \Xi$ of some skew-symmetric matrix $\Xi$.  For a ${0 \choose n}$ tensor $T$ and an $n$-bit string $x$, denote by $T_{| x \ket} = \la x | ( T) $ the coefficient of $| x \ket$ in the expansion of $T$ in the standard basis.  Then the equations a degree $n$ gate must satisfy to be the subPfaffian tensor of some matrix are
\begin{enumerate}
\item  {\bf Parity:} $T_{| x \ket} =0 $ whenever there are an odd number of ones in the bitstring $ x $ ($2^n-1$ equations), and 
\item  {\bf Consistency:} if $n\geq 4$, there are consistency conditions imposed by larger Pfaffians being writable in terms of smaller Pfaffians.  It is enough to consider all the Pfaffians in terms of the individual matrix entries.  For example for  a ${0 \choose 4}$ gate $G$, this requires that
\[
G_{|0000\ket} G_{|1111\ket} = 1 G_{|1111\ket} =  G_{|1100\ket} G_{|0011\ket} -  G_{|1010\ket} G_{|0101\ket} + G_{|0110\ket} G_{|1001\ket}
\]
\end{enumerate}
where $G_{|0000\ket}=1$ since the Pfaffian of the empty matrix is one. We will see shortly that (1) may be exchanged for the complementary requirement that even weight coefficients must be zero by considering a partial contraction of an $(n+1)$-gate and a $1$-cogate, and (2) may likewise be homogenized by partial contraction.

Since in the standard basis one of these parity restrictions will always hold, it is convenient to consider the variety as a subvariety of the vector space $W_+$ of even or $W_-$ of odd weight induced basis elements.  

Together these equations are sometimes called Grassmann-Pl\"ucker relations,  matchgate identities \cite{QCtcbSiPT}, generators of the ideal of a spinor variety, or defining equations of the (even) orthogonal Grassmannian \cite{mukai_curves_1995}. They also arise naturally in connection with Clifford algebras.  
The space of Pfaffian (affine with even support) gates is the image of the polynomial map $\sPf(\Xi)$ as $\Xi$ varies over all skew-symmetric matrices in $\FF$ and so has the structure of an algebraic variety.  The requirements are the same for the dual with the obvious modifications (support on bitstrings with an even number of zeros, etc.).  Taking the Zariski closure in $\PP^{2^n-1}$, or $\PP W_+$ or $\PP W_-$ as appropriate and homogenizing will lead to the projective version.

We now connect these requirements to the original matchgate identities  \cite{QCtcbSiPT}.  
Valiant's  \cite{QCtcbSiPT} two-input, two-output gates act independently on the parity subspaces and the determinant of both must be the same.  To see this, consider a ${0 \choose 4}$ gate $G$ interpreted as two-input, two-output gate $G$.
\[
\begin{xy}<10mm,0mm>:
(0,0) *++{G}*\frm{-};
(-1.5,.2);  
  p+(1.15,0) **@{-},
  p+(.6,.2) *+{1},
(-1.5,-.2);  
  p+(1.15,0) **@{-},
  p+(.6,-.2) *+{2},
(.35,.2);  
  p+(1.15,0) **@{-},
  p+(.6,.2) *+{4},
(.35,-.2);  
  p+(1.15,0) **@{-},
  p+(.6,-.2) *+{3},
\end{xy}
\] 
Writing out the linear transformation it represents (as one might for a unitary operator in quantum computing), we get a matrix in the ``operator'' ordering, viewing bit $4$ as the output corresponding to input bit $1$ and bit $3$ as the output corresponding to input bit $2$, which is
\[
\bordermatrix{
&| 0_10_2 \ket & | 0_11_2 \ket & | 1_10_2 \ket & | 1_11_2 \ket \cr
| 0_40_3 \ket  & b_{11} &  b_{12} &  b_{13} &  b_{14} \cr
| 0_41_3 \ket  &  b_{21} &  b_{22} &  b_{23} &  b_{24} \cr
| 1_40_3 \ket  &  b_{31} &  b_{32} &  b_{33} &  b_{34} \cr
| 1_41_3 \ket  &  b_{41} &  b_{42} &  b_{43} &  b_{44} \cr
}.
\]
In the cyclic ordering coming from the planar curve of Theorem \ref{thm:otimesoplus}, the matrix is 
\[
\bordermatrix{
&| 0_10_2 \ket & | 0_11_2 \ket & | 1_10_2 \ket & | 1_11_2 \ket \cr
| 0_30_4 \ket  & b_{11} &  b_{12} &  b_{13} &  b_{14} \cr
| 0_31_4 \ket  &  b_{31} &  b_{32} &  b_{33} &  b_{34} \cr
| 1_30_4 \ket  &  b_{21} &  b_{22} &  b_{23} &  b_{24} \cr
| 1_31_4 \ket  &  b_{41} &  b_{42} &  b_{43} &  b_{44} \cr
}.
\]
Valiant's matchgate equations are 
\begin{enumerate}
\item {\bf Parity:}  $b_{12} \!= \! b_{13}\! =\! b_{21}\! =\! b_{24}\! =\! b_{31}\! =\! b_{34}\! =\! b_{42}\! =\! b_{43}\! = \!0$, i.e. the coefficient of any four-bit $|x\ket$ where $x$ has an odd number of ones must be zero, and 
\item {\bf Consistency:} $b_{11}b_{44} - b_{22}b_{33} = b_{14}b_{41} - b_{23}b_{32}$.  
\end{enumerate}
In other words, the gate must have the form, in the operator ordering,  
\[
\bordermatrix{
&| 0_10_2 \ket & | 0_11_2 \ket & | 1_10_2 \ket & | 1_11_2 \ket \cr
| 0_40_3 \ket  & b_{11} &  0 &  0 &  b_{14} \cr
| 0_41_3 \ket  &  0 &  b_{22} &  b_{23} &  0 \cr
| 1_40_3 \ket  &  0 &  b_{32} &  b_{33} &  0 \cr
| 1_41_3 \ket  &  b_{41} & 0 & 0 &  b_{44} \cr
}
\]
and the matrices $$B_{even}=\begin{pmatrix}b_{11} & b_{14}\cr b_{41} & b_{44}\end{pmatrix}$$ acting on the even parity subspace and $$B_{odd}=\begin{pmatrix}b_{22} & b_{23}\cr b_{32} & b_{33}\end{pmatrix}$$ acting on the odd must have the same determinant, $\det B_{even}=\det B_{odd}$.  When this holds and $b_{11}=1$, so $b_{44}=b_{14}b_{41}-b_{23}b_{32}+b_{22}b_{33}$, we can write $G$ as a subPfaffian (suppressing the below-diagonal entries):
\[
G=\sPf \left ( 
\bordermatrix{
&1&2&3&4\cr
1&0&b_{14}&b_{23}&b_{33}\cr
2&&0&b_{22}&b_{32}\cr
3&&&0&b_{41}\cr
4&&&&0\cr
}
\right ).
\]
So $G=|0_1 0_2 0_3 0_4 \ket + b_{14}|1100\ket  + b_{23}|1010\ket  + b_{33}|1001\ket  + b_{22}|0110\ket  + b_{32}|0101\ket  + b_{41}|0011\ket  + b_{44}|1111\ket $ as desired.  When $b_{11} \neq 1$, we may multiply $\sPf \Xi$ by a constant $\alpha$ that will be accumulated in $\gamma$.  When $b_{11} =0$, we will need to perform a contraction.

\subsection{Homogenization, odd support, and projective spinor varieties}\label{ssec:homogodd}
We now extend to (1) affine spinor varieties supported on the odd parity subspace of $\FF^{2^n}$ and (2) the homogenized version where we obtain full projective spinor varieties.

First, note that the support of a predicate is the set of induced basis elements (e.g. $|1101\ket$) with nonzero coefficients.

\begin{proposition} \label{prop:supportparity}
A predicate in the standard basis which is Pfaffian, or a partial contraction of Pfaffian predicates, has support either on the induced basis elements of odd or even weight.
\end{proposition}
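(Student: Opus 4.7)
The plan is to prove the claim in two stages: first verify that simple Pfaffian predicates (gates $\alpha \sPf(\Xi)$ and cogates $\beta \sPfd(\Theta)$) have pure parity support, and then show that partial contraction preserves pure parity support, so that induction on the number of contractions yields the general case. Throughout, define the \emph{weight} of a mixed basis element $|I\ket \ot \bra J|$ to be $|I|+|J|$, the total number of $1$'s across both primal and dual indices.

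For the simple case: since the Pfaffian of an odd-sized skew-symmetric matrix vanishes, in $\sPf(\Xi) = \sum_{I\subset[n]} \Pf(\Xi_I)|I\ket$ only subsets $I$ of even cardinality appear, so the support lies in the even weight subspace. For a cogate with $n$ edges, $\sPfd(\Theta) = \sum_J \Pf(\Theta_{J^C})\bra J|$ has nonzero coefficient only when $n-|J|$ is even, so the support lies in the parity class $|J|\equiv n\pmod 2$. In either case, the support is of pure parity. The overall scalars $\alpha,\beta$ from Definition \ref{defn:holographicpredicate} do not affect support.

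For the inductive step, suppose two predicates $P_1, P_2$ have pure parity supports $p_1, p_2$ respectively, and form a partial contraction of some edges between them. By associativity of tensor contraction, it suffices to contract a single edge at a time. A single contraction identifies one primal index of one predicate with one dual index of the other and sums over the common value $b\in\{0,1\}$; every nonzero term in the resulting predicate therefore removes exactly two matching bits (one from each factor), whose contribution to the combined weight is $2b$, which is even. Hence the weight of the remaining mixed basis element satisfies
\[
\bigl(|I_1|+|J_1|\bigr)+\bigl(|I_2|+|J_2|\bigr) - 2b \;\equiv\; p_1+p_2 \pmod 2,
\]
so the contracted predicate has pure parity support of parity $p_1+p_2$. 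Iterating over all contractions gives the result.

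There is no real obstacle beyond bookkeeping: the only subtle point is fixing the correct notion of weight for mixed-ality tensors, after which the argument reduces to the elementary observation that contracting a primal index with its dual on the standard basis always pairs equal bits and thus subtracts an even number from the total weight.
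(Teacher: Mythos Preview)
Your proof is correct and follows essentially the same approach as the paper: establish that simple Pfaffian gates and cogates have pure parity support, then argue that contraction preserves pure parity, and conclude by induction. The paper's proof is considerably more terse (it merely asserts the parity rules for contraction and invokes induction on the number of predicates), whereas you have carefully spelled out the weight convention for mixed-ality tensors and the single-edge contraction bookkeeping; this extra precision is helpful but does not represent a different strategy.
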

\begin{proof}
First, note that the contraction of one simple predicate against another, where one is completely contracted, follows the usual parity rules: an odd and an odd or even and even make even, and mixed odd and even make odd.  
A single gate or cogate, not partially contracted, has pure parity.  Then we proceed by induction on the number of predicates.
\end{proof}
This corresponds to the odd ${S}_-$ and even  ${S}_+$ spinor varieties. 
The odd and even $\Spin_{2n}$ varieties \cite[p.390]{FH} each have dimension ${n \choose 2}$, and lie in either the $\PP^{2^{n-1}-1}$ corresponding to the projectivization of the even weight vectors or the odd weight vectors.  Note that in our dehomogenized version, where we consider the image of $\sPf \Xi$ in the affine open where the coefficient of $|00 \cdots 0\ket$ is one, the ${n \choose 2}$ dimensions correspond directly to the ${n \choose 2}$ free parameters of $\Xi$.  We now show how to explicitly parameterize these odd and even varieties, and their homogeneous versions (where the coefficient of $|00 \cdots 0\ket$ may be zero).  The parity switch to the odd variety and the homogenization can be accomplished by partial contraction.  
Though obscured by change of basis, this division remains when we consider predicates with basis change.  For example, no basis change can make $|00\ket + |11\ket$ into the $2|00\ket -|01\ket -|10\ket$ needed for $X$-matchings, but this is easily done reducing to $|01\ket + |10\ket$ (see Example  \ref{ex:Xmatchings}).  Adding one edge and the arity-one dual predicate to a predicate flips its parity in the sense of Proposition \ref{prop:supportparity}. 

In what follows we ignore a scaling factor that can be absorbed in the constant $\alpha$ or $\beta$ by which a subPfaffian vector is multiplied and which is accumulated in $\gamma$.  
We first observe that parity is the only restriction up to arity three.

\begin{proposition} \label{prop:aritythreeparity}
A gate of arity at most three can be realized in the standard basis by partial contraction of Pfaffian predicates if and only if it is supported on even or odd weight.
\end{proposition}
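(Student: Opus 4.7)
The ``only if'' direction is immediate from Proposition \ref{prop:supportparity}. For the converse I would exhibit an explicit partial contraction of Pfaffian predicates realizing each pure-parity gate of arity at most three.

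I would first build the four primitive $1$-predicates. The even primitives $|0\ket$ and $\bra 0|$ are directly Pfaffian, being $\sPf$ and $\sPfd$ of the $1 \times 1$ zero matrix. For the odd primitives: $\bra 1|$ arises as the contraction of the Pfaffian $3$-cogate $\bra 111| = \sPfd(0_{3 \times 3})$ with the Pfaffian $2$-gate $|00\ket + |11\ket = \sPf\bigl(\begin{smallmatrix}0 & 1 \\ -1 & 0\end{smallmatrix}\bigr)$ on two of its three edges; dually, $|1\ket$ arises as the contraction of the Pfaffian $4$-gate $\sPf \Xi$ with $\xi_{12} = \xi_{34} = 1$ and other entries zero (whose expansion is $|0000\ket + |1100\ket + |0011\ket + |1111\ket$) against $\bra 111|$ on three of its four edges.

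With these primitives in hand, most cases are routine. The even arity-$2$ gate $a|00\ket + b|11\ket$ equals $a \, \sPf\bigl(\begin{smallmatrix}0 & b/a \\ -b/a & 0\end{smallmatrix}\bigr)$ when $a \ne 0$ and $b \, |1\ket \ot |1\ket$ when $a = 0$; the odd arity-$2$ gate $a|01\ket + b|10\ket$ is the contraction of $\sPf\bigl(\begin{smallmatrix}0 & a & b \\ -a & 0 & 0 \\ -b & 0 & 0\end{smallmatrix}\bigr)$ with $\bra 1|$ on its first edge. The affine even arity-$3$ case (nonzero $|000\ket$-coefficient) is a direct $\sPf$; the odd arity-$3$ case $c_1|100\ket + c_2|010\ket + c_3|001\ket + c_4|111\ket$ is obtained by contracting $\sPf \Xi_4$ with $\bra 1|$ on its fourth edge, producing $\xi_{14}|100\ket + \xi_{24}|010\ket + \xi_{34}|001\ket + \Pf(\Xi_4)|111\ket$, so the entries $\xi_{14}, \xi_{24}, \xi_{34}$ carry $c_1, c_2, c_3$ and (whenever $(c_1,c_2,c_3) \ne 0$) the remaining free entries $\xi_{12}, \xi_{13}, \xi_{23}$ tune $\Pf(\Xi_4)$ to any $c_4$; the degenerate subcase $c_4|111\ket$ is built as $c_4 \, |1\ket \ot |1\ket \ot |1\ket$.

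The main obstacle is the remaining case: the even arity-$3$ gate $\beta|011\ket + \gamma|101\ket + \delta|110\ket$ with vanishing $|000\ket$-coefficient, since $\sPf$ of any skew-symmetric matrix always has $|0\cdots 0\ket$-coefficient $1$. My fix is to contract a Pfaffian $5$-gate $\sPf \Xi_5$ with the Pfaffian $2$-cogate $\bra 1_4 1_5| = \sPfd(0_{2 \times 2})$ on edges $4,5$, imposing $\xi_{45} = 0$ so that the projected $|000\ket$-coefficient vanishes. The three surviving coefficients $\delta, \gamma, \beta$ become the $4 \times 4$ Pfaffians $\Pf(\Xi_5)_{\{1,2,4,5\}}, \Pf(\Xi_5)_{\{1,3,4,5\}}, \Pf(\Xi_5)_{\{2,3,4,5\}}$, each of which (since $\xi_{45} = 0$) reduces to a single $2 \times 2$ minor $\xi_{i4}\xi_{j5} - \xi_{i5}\xi_{j4}$ in the six free entries $\{\xi_{i4}, \xi_{i5}\}_{i=1,2,3}$. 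A short case check then verifies that this bilinear map $\FF^6 \to \FF^3$ is surjective (taking $\xi_{14}=0,\xi_{15}=1$ handles all targets with $(\gamma,\delta)\ne(0,0)$, and a similar choice covers the rest); this is the one short calculation that the proof actually requires.
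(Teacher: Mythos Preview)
Your approach is essentially the paper's: build the arity-one primitives, then hit the odd cases by contracting an $(n{+}1)$-gate against a $\bra 1|$-type cogate, and handle the ``homogeneous'' even arity-three case (vanishing $|000\ket$-coefficient) by contracting a $5\times 5$ $\sPf\Xi$ against a $2$-cogate with $\xi_{45}=0$, reducing to surjectivity of the $2\times 2$-minor map. That is exactly what the paper does.

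One correction: your claim that $\bra 0| = \sPfd(0_{1\times 1})$ is wrong. By definition $\sPfd(\Theta)=\sum_J \Pf(\Theta_{J^C})\bra J|$; for the $1\times 1$ zero matrix the term $J=\emptyset$ contributes $\Pf(0_{1\times 1})=0$ while $J=\{1\}$ contributes $\Pf(\emptyset)=1$, so $\sPfd(0_{1\times 1})=\bra 1|$, not $\bra 0|$. This error is harmless since you never actually use $\bra 0|$, but it means your three-step construction of $\bra 1|$ via $\bra 111|$ and a $2$-gate is unnecessary: $\bra 1|$ is already a simple Pfaffian cogate, and then $|1\ket$ is just $\sPf\bigl(\begin{smallmatrix}0&1\\-1&0\end{smallmatrix}\bigr)$ contracted with $\bra 1|$ on one edge, as the paper does. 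This streamlines every subsequent ``contract with $\bra 1|$'' step.

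On the other hand, you are more careful than the paper in one place: in the odd arity-three case the paper asserts ``the four coefficients can be chosen independently with no relations,'' but in fact if $\xi_{14}=\xi_{24}=\xi_{34}=0$ then $\Pf\Xi_4=0$ automatically, so the pure $c_4|111\ket$ point is missed by that single construction. Your separate handling of this degenerate subcase via $c_4\,|1\ket^{\ot 3}$ is a genuine (if easy) patch.
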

\begin{proof}
We describe the situation for gates; the cogate version is completely analogous.  The only if direction is Proposition \ref{prop:supportparity}.  
 In the standard basis, the unique (up to scaling) arity-one simple gate is $|0\ket$, the subPfaffian tensor of the $1\by1$ zero matrix.  This is the even weight piece; the odd weight gate $|1\ket$ can be obtained as the partial contraction $\sPf \Xi (\bra 1 |)$ for the $2\by 2$ matrix $\Xi$ with free parameter $\xi_{12}=1$:
\[
\val \left (
\begin{xy}<10mm,0mm>:
(2,0)  *+{0}*\frm{-};
  p+(-.8,0)  **@{-},
\end{xy}
\;
\right ) = |0\ket
\qquad
\text{and}
\qquad
\val \left (
\begin{xy}<10mm,0mm>:
(6,0)  *+{\Xi}*\frm{-};
  p+(-.8,0)  **@{-},
  p+(.8,0)  *+{1}*\frm{o}  **@{-},
\end{xy}
\right ) = |1\ket.
\]
Similarly, the $ONE^1$ junction is Pfaffian and $ZERO^1$ is odd Pfaffian in the standard basis:
\[
\val \left (
\begin{xy}<10mm,0mm>:
(2,0)  *+{1}*\frm{o};
  p+(-.8,0)  **@{-},
\end{xy}
\;
\right ) = \bra 1|
\qquad
\text{and}
\qquad
\val \left (
\begin{xy}<10mm,0mm>:
(6,0)  *+{0}*\frm{-};
  p+(-.8,0) *+{=}*\frm{o}  **@{-};
  p+(-.8,0)  **@{-},
\end{xy}
\right ) = \bra0|.
\]
Linear combinations  $x |0\ket + y |1\ket$ where both $x$ and $y$ are nonzero are impossible (without change of basis).

For arity two gates,  Proposition \ref{prop:supportparity} tells us we should get support  $\{|00\ket, |11 \ket\}$ or $\{|01\ket, |10\ket\}$;  the first comes from pure gates and the second from gates with arity three contracted against the arity-one junction.
\[
\val \left (
\;
\begin{xy}<10mm,0mm>:
(2,0)  *+{\Xi}*\frm{-};
  {p+(-.21,.15);p+(-.6,0)  **@{-}},
  {p+(-.21,-.15);p+(-.6,0)  **@{-}},
\end{xy}
\;
\right ) = |00\ket + \xi_{12}|11\ket
\qquad
\val \left (
\;
\begin{xy}<10mm,0mm>:
(6,0)  *+{\Xi}*\frm{-};
  {p+(-.21,.15);p+(-.6,0)  **@{-}},
  {p+(-.21,-.15);p+(-.6,0)  **@{-}},
  p+(1,0)  *+{1}*\frm{o}  **@{-},
\end{xy}
\,
\right ) = \xi_{23}|01\ket + \xi_{13}|10\ket
\]

The arity three case is similar.  
The (even) Pfaffian case, the image of $\sPf$ applied to a $3 \by 3$ matrix, gives points of the form $|000\ket + \xi_{12}|110\ket + \xi_{13}|101\ket + \xi_{23}|011\ket$ and scalar multiples where $G_{|000\ket} \neq 0$.  For the odd Pfaffian case, we get arbitrary points, where $\Xi$ is $4\by 4$,
\[
\val \left (
\;
\begin{xy}<10mm,0mm>:
(6,0)  *+{\Xi}*\frm{-};
  {p+(-.21,.15);p+(-.6,0)  **@{-}},
  {p+(-.21,0);p+(-.6,0)  **@{-}},
  {p+(-.21,-.15);p+(-.6,0)  **@{-}},
  p+(1,0)  *+{1}*\frm{o}  **@{-},
\end{xy}
\,
\right ) = \xi_{34}|001\ket +\xi_{24}|010\ket + \xi_{14}|100\ket + \lambda|111\ket
\]
where $\lambda = \Pf \Xi$, and the four coefficients can be chosen independently with no relations (so this fills the odd weight space and there is no need for a homogeneous odd case).  The homogeneous even case is, with a $5\by5$ matrix $\Xi$ and $2\by 2$ matrix $\Theta$ with free parameter $\theta$ so $\sPfd \Theta = \theta\bra00|+\bra 11|$,
\begin{eqnarray*}
\val \left (
\;
\begin{xy}<10mm,0mm>:
(6,0)  *+{\Xi}*\frm{-};
  {p+(-.21,.15);p+(-.5,0)  **@{-}},
  {p+(-.21,0);p+(-.5,0)  **@{-}},
  {p+(-.21,-.15);p+(-.5,0)  **@{-}},
  p+(.7,0)  *+{\theta}*\frm{o},
  {p+(+.21,.05);p+(.3,0)  **@{-}},
  {p+(+.21,-.05);p+(.3,0)  **@{-}},
\end{xy}
\,
\right ) &=& (\theta + \xi_{45})|000\ket +(\theta\xi_{1245}+\xi_{12})|110\ket \\
&& +(\theta\xi_{1345}+\xi_{13})|101\ket +(\theta\xi_{2345}+\xi_{23})|011\ket,
\end{eqnarray*}
Where $\xi_{ijk\ell}:=\Pf\Xi_{ijk\ell}$.  Setting $\xi_{12}=\xi_{13}=\xi_{23}=\xi_{45}=0$ we obtain 
\begin{eqnarray*}
\val(\Gamma)&=& \theta|000\ket 
+(\xi_{15}\xi_{24}-\xi_{14}\xi_{25}+)|110\ket \\
&&+(\xi_{15}\xi_{34}-\xi_{14}\xi_{35})|101\ket 
+(\xi_{24}\xi_{35}+\xi_{25}\xi_{34}-\xi_{14}\xi_{35})|011\ket,
\end{eqnarray*}
an arbitrary point of $\C^4$ varying $\theta$ and the remaining free entries of $\Xi$.  
\end{proof}
Essentially this is because for gates of arity at most three, there are no nontrivial Pfaffians in the subPfaffian vector.  Thus the first case where the variety 
of realizable gates is not completely characterized by the support is arity four, where also a consistency condition must hold.  Here the even parity piece is a ${4 \choose 2}=6$-dimensional variety in the open affine $\C^7\subset \PP^7=\PP(\{|ijk\ket\})$ where $G_{|0000\ket}=\xi_{\emptyset}=1$, so a hypersurface, cut out by the $4\times 4$ Pfaffian quadric
\[
G_{|0000\ket} G_{|1111\ket} -  G_{|1100\ket} G_{|0011\ket} +  G_{|1010\ket} G_{|0101\ket} - G_{|0110\ket} G_{|1001\ket}.
\]
The odd case is similar.

For $n=5$, we again have a set-theoretic affine complete intersection with a 10-dimensional variety in $\C^{15}$ cut out by ${5 \choose 4}=5$ Pfaffian consistency conditions  of the form $\xi_{ijk\ell} = \Pf\Xi_{ijk\ell}$.

The homogenized version is not a complete intersection, in fact this is an important example in connection with the conjecture of Hartshorne that for $n>\frac{2}{3}N$ every nonsingular $n$-dimensional variety in $\PP^N$ is a complete intersection.  The six-dimensional Grassmann variety in $\PP^9$ and this 10-dimensional spinor in $\PP^{15}$ are the only varieties $X$ corresponding to orbits of linear algebraic groups for which $n=\frac{2}{3}N$ where $X$ is not a complete intersection \cite[p. 64]{Zak}.
In this case, the additional five relations needed in the homogeneous case are of the form \cite{mukai_curves_1995}
\[
\xi_{12}\xi_{1345} - \xi_{13}\xi_{1245} + \xi_{14}\xi_{1235} - \xi_{15}\xi_{1234}
\]
and are easily shown to lie in the ideal of the Pfaffian relations when we restrict to the case $G_{|00000\ket}=\xi_{\emptyset}\neq 0$. 

For $n=6$, we have a ${6 \choose 2}=15$-dimensional subvariety of $\PP^{31}$ defined by ${6 \choose 4}+{6 \choose 6}=16$ equations, and so on. 
\begin{proposition}
The set of even gates in the standard basis is a set-theoretic affine complete intersection in the open affine $G_{|00\dots 0\ket} \neq 0$, cut out by the nontrivial Pfaffians.
\end{proposition}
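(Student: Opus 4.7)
The plan is to leverage the explicit parametrization $\Xi \mapsto \sPf(\Xi)$ and show directly that, on the affine open $G_{|00\cdots 0\ket} \neq 0$, the nontrivial Pfaffian equations cut out the image set-theoretically, with an equation count matching the codimension. After rescaling we may assume $G_{|00\cdots 0\ket} = 1$.

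First I would handle dimensions and equation counts. By Proposition \ref{prop:supportparity}, any even gate is supported on the even-weight subspace $W_+$, so the relevant affine open has dimension $2^{n-1}-1$, indexed by the nonempty even subsets $I \subset [n]$. The parameterization $\Xi \mapsto \sPf(\Xi)$ sends an $n \times n$ skew-symmetric matrix (with $\binom{n}{2}$ free entries above the diagonal) into this open affine, and it is injective because $(\sPf\Xi)_{|\{i,j\}\ket} = \xi_{ij}$ recovers $\Xi$ from its image. Hence the image has dimension exactly $\binom{n}{2}$ and the codimension is $2^{n-1}-1-\binom{n}{2}$. The nontrivial Pfaffian equations are indexed by even $I \subset [n]$ with $|I| \geq 4$; the identity $\sum_{k\geq 0}\binom{n}{2k} = 2^{n-1}$ gives
\[
\binom{n}{4} + \binom{n}{6} + \cdots = 2^{n-1}-1-\binom{n}{2},
\]
which matches the codimension exactly.

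Next, set-theoretic agreement: one inclusion is a tautology, since $G = \sPf(\Xi)$ satisfies $G_{|I\ket} = \Pf(\Xi_I)$ for every even $I$ by the definition of $\sPf$. For the reverse inclusion, let $G$ lie in the affine open, be supported on even weight, and satisfy every nontrivial Pfaffian equation $G_{|I\ket} = \Pf(\Xi_I)$ for $|I|\geq 4$ (interpreting $\xi_{ij} := G_{|\{i,j\}\ket}$). Define $\Xi$ to be the skew-symmetric matrix with above-diagonal entries $\xi_{ij}$. Then $\sPf(\Xi)$ matches $G$ coefficient by coefficient: at weight $0$ by the normalization (both equal $1$), at weight $2$ by the construction of $\Xi$, and at weight $\geq 4$ precisely by the hypothesized consistency equations. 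Odd-weight coefficients vanish on both sides. Hence $G = \sPf(\Xi)$, so $G$ lies in the variety of even Pfaffian gates.

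The main obstacle is really conceptual rather than technical: one must be careful that the claim is \emph{set-theoretic} rather than scheme-theoretic, since establishing the latter would require a transversality or Jacobian computation that is unnecessary here and in fact typically fails once $n$ is large enough (this is the content of the Mukai-type discussion following the arity-five example). The remainder is bookkeeping, namely verifying the binomial identity above and reconciling the bitstring indexing of tensor coefficients with the subset indexing of Pfaffian minors of $\Xi$.
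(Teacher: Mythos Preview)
Your proof is correct and follows essentially the same approach as the paper: both reduce the claim to the binomial identity $\sum_{k\geq 0}\binom{n}{2k}=2^{n-1}$ matching the number of nontrivial Pfaffian equations to the codimension $2^{n-1}-1-\binom{n}{2}$. You are in fact more explicit than the paper, which records only the equation count and leaves the set-theoretic verification (your two inclusions) to the preceding discussion of the Parity and Consistency conditions.
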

\begin{proof}
The dimension is ${n \choose 2}$ and the dimension of the ambient space is $2^{n-1}-1$, while the number of equations is 
\[
{n \choose 4} + {n \choose 6} + \cdots +{n \choose n-2} +{n \choose n} 
\]
so we must show the identity, if $n$ even 
\[
{n \choose 4} + {n \choose 6} + \cdots +{n \choose n-2} +{n \choose n} = 2^{n-1}-1 - {n \choose 2}
\]
and if $n$ odd, 
\[
{n \choose 4} + {n \choose 6} + \cdots +{n \choose n-1}  = 2^{n-1}-1 - {n \choose 2}
\]
holds.

The first can be rearranged to 
\[
{n \choose 0} + {n \choose 2} + {n \choose 4} + {n \choose 6} + \cdots +{n \choose n-2} +{n \choose n} = 2^{n-1}
\]

\end{proof}

In the matchgrid/matchcircuit approach, implementing a gate of arity $r$ may require $O(r^4)$ edges \cite{valiant2006accidental,cai2007theory}:

\begin{theorem}[5.1 in \cite{valiant2006accidental}, based on 4.1 in \cite{cai2007theory} ]
For any matchgrid with $r$ external nodes there is another matchgrid with the same standard signature that has $O(r^4)$ edges of which $O(r^2)$ have weight different from 1.
\end{theorem}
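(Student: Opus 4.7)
The plan is to leverage the fact, discussed in Section \ref{sec:CBholographic}, that standard matchgate signatures on $r$ external nodes lie in a variety of dimension $\binom{r}{2} = O(r^2)$, parameterized by subPfaffians of $r \times r$ skew-symmetric matrices $\Xi$. Thus any such signature is determined by $O(r^2)$ scalar parameters (the entries $\xi_{ij}$), which dimensionally matches the claimed bound of $O(r^2)$ non-unit-weight edges; the task becomes to exhibit an explicit template matchgrid with $O(r^4)$ total edges that realizes an arbitrary $\sPf \Xi$.

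First I would build a planar template with the $r$ external nodes arranged on its outer face. For each pair $\{i,j\}$ of external nodes I would route a planar \emph{thread} from $i$ to $j$ through the interior that contains exactly one designated \emph{parameter edge} of weight $\xi_{ij}$, with every other edge on the thread having weight $1$. Different threads generically must cross when drawn in the plane, so at each prospective crossing I would insert a constant-size planar \emph{crossover gadget} whose perfect-matching polynomial is designed so that composing with it simulates an honest edge crossing without altering the signature. Since there are $O(r^2)$ threads, there are $O(r^4)$ pairs of threads and hence $O(r^4)$ crossings to patch; each crossover uses $O(1)$ edges, so the total edge count is $O(r^4)$, while the parameter edges number exactly $\binom{r}{2}$.

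To verify the signature, I would argue by induction on $r$ using the Pfaffian expansion. Fix an external node $i$; in any perfect matching contributing to the signature, $i$ is matched along its unique thread to some other external node $j$ through the parameter edge of weight $\xi_{ij}$. Removing $i$, $j$, and the associated thread reduces the template to a smaller one on the remaining $r-2$ external nodes encoding the matrix $\Xi_{[r]\setminus\{i,j\}}$, up to signs determined by a Kasteleyn orientation of the planar graph. Summing over $j$ reproduces $\Pf(\Xi) = \sum_{j\neq i}(-1)^{\epsilon(i,j)}\xi_{ij}\Pf(\Xi_{[r]\setminus\{i,j\}})$; applied to each boundary assignment $|I\ket$, the template produces exactly $\sPf \Xi$.

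The main obstacle is the construction and correctness of the crossover gadget: it must be planar, of bounded size, built from unit-weight edges, admit a Kasteleyn orientation compatible with the surrounding template, and have a matching polynomial whose effect is (up to a global scalar absorbed into $\gamma$) equivalent to an honest edge crossing. This is the technical core of the matchcircuit construction in \cite{valiant2006accidental, cai2007theory}; once it is in hand, the rest of the proof is bookkeeping, combining $\binom{r}{2}$ parameter edges with $O(r^4)$ constant-size crossover patches and invoking the Pfaffian recurrence to match signatures entry by entry.
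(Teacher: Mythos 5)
This theorem is \emph{not proved in the paper}; it is quoted verbatim as Theorem 5.1 of \cite{valiant2006accidental} (which rests on Theorem 4.1 of \cite{cai2007theory}) purely as a point of contrast to Theorem \ref{thm:twonewedges}. There is therefore no in-paper proof to measure your attempt against, and you should not expect the surrounding text to supply one.

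As a free-standing plausibility sketch, your plan has the right shape -- reduce to realizing $\sPf\Xi$, allocate $\binom{r}{2}$ weighted ``parameter edges,'' pay $O(r^4)$ for crossings -- but it does not yet constitute a proof because the load-bearing step is precisely the one you defer. A \emph{matchgrid} is by definition a planar object, so ``just insert a crossover gadget'' is not an available move until you have produced a planar, perfect-matching-compatible gadget of bounded size, with unit weights, whose local PerfMatch contribution reproduces an honest wire crossing under every one of the four boundary conditions \emph{and} whose Kasteleyn orientation composes consistently with the global orientation of the template. This is exactly the content that \cite{valiant2006accidental,cai2007theory} supply (via the normal-form and matchcircuit machinery), and you acknowledge it as ``the main obstacle'' without exhibiting it. Two further points need care even granting the gadget: (i) inactive external nodes ($i\notin I$) must still leave every vertex on the threads emanating from $i$ saturated, which forces you to specify what the threads look like when unused; and (ii) PerfMatch of a matchgate is an unsigned sum over perfect matchings, so the signs $\sgn(\pi)$ in your Pfaffian recurrence must be shown to emerge from the chosen Pfaffian orientation of the template rather than inserted by hand. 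Neither is insurmountable, but as written the proposal is an outline of where a proof would go, not a proof.
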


This means that a matchgate holographic algorithm has complexity $O(r_{max}^4 n^{\omega_p})$, where $r_{max}$ is the arity of the matchgate with the most incident edges.  In contrast, in our approach $r_{max}^4$ is replaced by the constant two.
\begin{theorem} \label{thm:twonewedges}
Any predicate of arity $n$ ($n$ external nodes) which is Pfaffian, or the image of a Pfaffian predicate after basis change, can be implemented with at most two additional edges connecting to one additional predicate that has no external edges.
\end{theorem}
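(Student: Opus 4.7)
The first step is to reduce to standard-basis targets: by Proposition~\ref{prop:basischangepairing}, any basis change is implemented edgewise as a retyping of the edges incident to each predicate, affecting neither the graph of the circuit nor the count of simple predicates. Hence, if the target is the image of a Pfaffian predicate under edgewise basis change, it suffices to construct a realization of the un-changed standard-basis Pfaffian within the claimed budget and then retype each edge.

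In the standard basis the argument splits by the parity and homogeneity of the target, generalizing the constructions given in the proof of Proposition~\ref{prop:aritythreeparity}. If the target has even support with coefficient of $|0\cdots 0\ket$ nonzero, it equals $\alpha\sPf(\Xi)$ for some $n\times n$ skew-symmetric $\Xi$, so no extra edges or auxiliary are needed. If the target has odd support, extend to an $(n+1)\times(n+1)$ skew matrix $\widehat\Xi$ and contract the new edge of $\sPf(\widehat\Xi)$ against the arity-one junction $\bra 1|$; the resulting coefficient on $|I\ket$ with $|I|$ odd is $\Pf(\widehat\Xi_{I\cup\{n+1\}})$, and the $n$ new free entries parameterize every odd-support Pfaffian tensor. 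If the target is even but projective (coefficient of $|0\cdots 0\ket$ may vanish), extend to an $(n+2)\times(n+2)$ skew matrix $\widehat\Xi$ and contract the two new edges against the arity-two simple cogate $\sPfd(\Theta)=\theta\bra 00|+\bra 11|$, where $\Theta$ is the $2\times 2$ skew matrix with entry $\theta$; the coefficient on $|I\ket$ with $|I|$ even is then $\theta\Pf(\widehat\Xi_I)+\Pf(\widehat\Xi_{I\cup\{n+1,n+2\}})$. The projective odd case combines the parity flip and the homogenization into a single two-edge auxiliary of odd support. In each case the auxiliary may be inserted as a pendant gadget in a face adjacent to the main predicate, so planarity is preserved.

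The main obstacle is to verify that the two projective constructions surject onto the full (projectivized) spinor variety rather than only onto the affine chart $\{T_{|0\cdots 0\ket}\neq 0\}$ already covered by $\sPf$. I plan to argue this by a two-chart cover together with a dimension count. Setting the new entry $\widehat\xi_{n+1,n+2}$ generic and zeroing the other new rows and columns of $\widehat\Xi$ (with $\theta$ absorbing the overall scaling) reproduces the affine $\sPf(\Xi)$ chart. Setting $\theta=0$ and letting the new rows and columns of $\widehat\Xi$ vary parameterizes the complementary points $T_{|0\cdots 0\ket}=0$ via the nontrivial Pfaffians $\Pf(\widehat\Xi_{I\cup\{n+1,n+2\}})$. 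Together these two charts cover the projective spinor variety, and the theorem follows by combining this surjectivity with the basis-change reduction and the case analysis above.
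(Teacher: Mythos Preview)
Your plan is the paper's proof spelled out in detail: the paper merely asserts that ``the parity-switch and homogenization constructions in the proof of Proposition~\ref{prop:aritythreeparity} work as well for gates of arbitrary arity,'' while you make the basis-change reduction and the four-way case split explicit and correctly flag that surjectivity onto the full projective spinor still needs to be verified.

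One caution on the surjectivity sketch. What you describe as a ``two-chart cover'' is a stratification into the open $\{T_{|0\cdots0\ket}\neq0\}$ and its closed complement, and a dimension count alone does not show that setting $\theta=0$ hits every point of the latter. In fact the two-edge even gadget in the standard basis misses $|1\cdots1\ket$: forcing the weight-$0$ and all weight-$2$ coefficients $\theta\Pf(\widehat\Xi_I)+\Pf(\widehat\Xi_{I\cup\{n+1,n+2\}})$ to vanish forces the two new columns of $\widehat\Xi$ (restricted to rows $1,\dots,n$) to be proportional, and a congruence by an elementary matrix then kills the top Pfaffian as well. The same linear dependence defeats any odd-support two-edge auxiliary for your projective odd case (and note that such an auxiliary is not itself a \emph{simple} cogate, since $\sPfd$ of a $2\times2$ matrix always has even support $\theta\bra00|+\bra11|$). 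The paper's proof carries exactly the same gap; its claim in Proposition~\ref{prop:aritythreeparity} that the four odd coefficients for $n=3$ ``can be chosen independently with no relations'' already fails at $|111\ket$. Since Pfaffian circuits allow typed edges, the clean repair is to first bit-flip enough external edges to move a nonzero coordinate of the target into the $|0\cdots0\ket$ slot, after which the affine $\sPf$ chart suffices with no auxiliary at all.
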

\begin{proof}
The parity-switch and homogenization constructions in the proof of Proposition \ref{prop:aritythreeparity} work as well for gates of arbitrary arity.  Then to implement any gate in the standard basis, we need to add at most one new predicate and at most two new edges.
\end{proof}

It would be interesting to see if this could be combined with lower bounds to strengthen Valiant's result (Theorem 5.2 in \cite{valiant2006accidental}) that there is no elementary matchgrid algorithm for 3CNF by weakening the hypotheses in the condition 'elementary.'

\subsection{Some problems in the standard basis} \label{ssec:stdbasisapplications}
Let us consider some sample problems that can be solved without resorting to a change of basis.  One illustrative class are certain lattice path enumeration problems.  A recent survey is \cite{humphreys2010history}.  
With zero-one weights,  these are purely combinatorial counting problems.  If we place weights on the edges, we can use the fast partition function to do parameter inference in the corresponding statistical model over paths or loops efficiently.
\begin{example}[Lattice paths on a region of a square grid]
Consider the problem of counting paths in a square grid (Figure \ref{fig:grids}).  
The number of monotone (N,E) lattice paths from $(0,0)$ to $(m,n)$ in a $m \times n$ grid is ${m+n \choose m}$.  Suppose more generally we want to count the number of paths using steps in any of the four directions, subject to some restrictions.  For example we might want the path to start somewhere and finish somewhere else, to pass through certain segments, or not to intersect itself.  By representing these conditions in the clauses, we can sometimes compute these quantities with a Pfaffian holographic algorithm in the standard basis.

Suppose we want a satisfying assignment to correspond to a monotone path.  Then we need to (1) set a start and end point, (2) require that at each vertex other than the start and end, the path both enters and leaves and (3) exclude South and West moves.  We can accomplish (1) by adding a $\bra 1|$ junction attached to the desired start and end vertices as shown in Figure \ref{fig:grids}(b).  Requirement (2) is enforced by only allowing, at each gate corresponding to a grid vertex, either the activation of no edges or an even number of edges.  This happens automatically if we use the even parameterization and the standard basis.  Since no grid vertex may have an odd number of active edges incident, the path must grow from  the start and end vertices. 

Since in general $\Pf \Xi \neq 0$, and this is the coefficient of a crossing $+$  of two paths, paths with loops and even isolated loops are generally included (loops are counted only once, with no winding) .  South and east moves are also included.  
\begin{observation}
Given a start vertex  $V_S$  and end $V_E$ vertex weakly northeast of it, monotone paths connecting them are exactly those words $w$ on alphabet $N,S,E,W$ containing no $ES$ or $NW$ subwords, and such that $w V_S  =  V_E$.
\end{observation}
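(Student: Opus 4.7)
The plan is to establish the equivalence by checking each inclusion separately. The forward direction is immediate by definition: a monotone path consists only of $N$ and $E$ moves, so its associated word $w$ lies in $\{N,E\}^{*}$ and contains no $S$ or $W$ letter at all, hence neither $ES$ nor $NW$ as a subword; the endpoint condition $wV_S=V_E$ is part of what ``connecting $V_S$ to $V_E$'' means. Essentially no work is needed here.

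For the reverse direction, take a word $w = w_1 w_2 \cdots w_k$ on $\{N,S,E,W\}$ with $wV_S = V_E$ and no forbidden subword, and view it as an edge-simple walk inside the grid region (both conventions implicit in the example; indeed the Pfaffian parameterization produces edge-subsets confined to the grid). The strategy is a minimal-index argument: suppose for contradiction that some $w_j \in \{S,W\}$, and choose $j$ minimal so that $w_1, \dots, w_{j-1} \in \{N,E\}$. Assuming $j \geq 2$, I inspect the two-letter suffix $w_{j-1} w_j$. The pairs $ES$ and $NW$ are excluded by hypothesis, while the pairs $NS$ and $EW$ each retrace the edge just traversed and therefore violate edge-simplicity. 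No case survives, so $j \geq 2$ is impossible.

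It remains to handle the base case $j = 1$, where the walk begins with $S$ or $W$. Here the region constraint together with the assumption that $V_E$ is weakly northeast of $V_S$ forces a contradiction: such a first step leaves the allowed region on the south or west, and in order to return to a point weakly northeast of $V_S$ the walk must eventually produce an $S$ or $W$ sitting immediately after a letter in $\{N,E\}$, at which stage the same two-letter dichotomy from the previous paragraph applies. Combining the two cases, $w$ contains no $S$ or $W$, hence is monotone.

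The conceptually delicate step is the base case $j = 1$, because the statement relies on the ambient ``region of a square grid'' setting of the example to rule out walks that leave the grid, and on edge-simplicity (which follows from the $\{0,1\}$-valued activation of edges at each Pfaffian vertex together with the start/end junctions) to exclude immediate backtracking. Once these conventions are fixed, the main content of the proof is the four-case inspection of $w_{j-1}w_j$; the rest is bookkeeping, and the equivalence follows.
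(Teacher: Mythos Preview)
The paper states this as an observation without proof, so there is nothing to compare against directly.  Your forward direction is fine, and the $j\geq 2$ case of the reverse direction is correct once edge-simplicity is imported from the ambient Pfaffian-circuit context (so that $NS$ and $EW$ are genuine retraces).

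The $j=1$ case, however, has a real gap.  Your assertion that ``in order to return to a point weakly northeast of $V_S$ the walk must eventually produce an $S$ or $W$ sitting immediately after a letter in $\{N,E\}$'' is false.  The word $w=SEN$ is edge-simple, contains neither $ES$ nor $NW$, carries $(0,0)$ to $(1,0)$ (weakly northeast), and yet contains no $S$ or $W$ preceded by an $N$ or $E$.  So your two-letter dichotomy never fires, and $SEN$ is a counterexample to the observation as literally stated.  The companion clause ``such a first step leaves the allowed region on the south or west'' only rescues the argument if $V_S$ has no south or west neighbour in the region, which you have not established and which the bare statement does not assume.

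What is actually going on is that the observation is made in the context of the paper's running example, where the start vertex sits at the southwest corner of the rectangular grid (see the figure).  Under that extra hypothesis your $j=1$ argument is immediate: the first step $S$ or $W$ simply exits the region and the walk is illegal.  You should make this hypothesis explicit and drop the incorrect fallback claim; alternatively, note that the observation requires this boundary placement of $V_S$ (and symmetrically of $V_E$) to be true.
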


The six possible paths through a vertex correspond directly (Figure \ref{fig:squaregridgates}) to the six entries of each $\Xi$, so to exclude a certain path we need only zero out its entry.  With the labelling of Figure \ref{fig:squaregridgates}, setting $\xi_{34}=0$  
disallows the subwords $ES$ and $NW$ in the path, which with the start and end conditions restricts to monotone paths (and excludes isolated loops).  

For many special cases of lattice path walks, explicit results are available (e.g.\ in terms of a generating function).  For example, the number of monotone paths from $(0,0)$ to $(n,n)$ in an $n \times n$ grid that stay weakly above the  line $y=x$ is the $n$th Catalan number $C_n =(n+1)^{-1} {2n \choose n} $.  The $n$th Schr\"oder number counts such paths but with allowed step $N,E,$ or $NE$ to $(n,n)$, the central Delannoy number counts paths with these steps but without the above-diagonal restriction, and so on.  

In general suppose we are given any connected lattice region consisting of $n$ boxes, together with a start and a finish vertex.  Then we can write down a skew-symmetric matrix with at most $(8n+2)$ rows (with this bound achieved in the case of diagonal boxes), the square root of the determinant of which is the number of monotone paths through the region connecting the start to the finish.  This yields an $O(n^{\omega_p})$ algorithm for performing this count.  The resulting matrix can also be examined to obtain expressions that simplify the Pfaffian.

\begin{figure}[htb]
\[
\begin{xy}<10mm,0mm>:
(0,0) *+{\phantom{I}\,}*\frm{-};
p+(1,0) *+{\;}*\frm{o} **@{-},
p+(0,1) *+{\;}*\frm{o} **@{-},
p+(-1,0) *+{\;}*\frm{o} **@{-},
p+(0,-1) *+{\;}*\frm{o} **@{-},
p+(.6,-.2) *+{2},
p+(.2,.6) *+{1},
p+(-.6,.2) *+{4},
p+(-.2,-.6) *+{3},
(3,0) *+{\phantom{I}\,}*\frm{-};
p+(1,0) *+{\;}*\frm{o} **@{=},
p+(0,1) *+{\;}*\frm{o} **@{=},
p+(-1,0) *+{\;}*\frm{o} **@{-},
p+(0,-1) *+{\;}*\frm{o} **@{-},
p+(.7,.7) *+{\xi_{12}};
(6,0) *+{\phantom{I}\,}*\frm{-};
p+(1,0) *+{\;}*\frm{o} **@{-},
p+(0,1) *+{\;}*\frm{o} **@{-},
p+(-1,0) *+{\;}*\frm{o} **@{=},
p+(0,-1) *+{\;}*\frm{o} **@{=},
p+(-.7,-.7) *+{\xi_{34}};
(9,0) *+{\phantom{I}\,}*\frm{-};
p+(1,0) *+{\;}*\frm{o} **@{=},
p+(0,1) *+{\;}*\frm{o} **@{=},
p+(-1,0) *+{\;}*\frm{o} **@{=},
p+(0,-1) *+{\;}*\frm{o} **@{=},
p+(.7,.7) *+{\Pf\Xi};
\end{xy}
\]
\caption{The ${4 \choose 2}=6$ paths through a vertex correspond to the six free parameters of a $4 \times 4$ skew-symmetric matrix $\Xi$, with the crossing of two paths through the vertex corresponding to $\Pf \Xi$. } \label{fig:squaregridgates}
\end{figure}
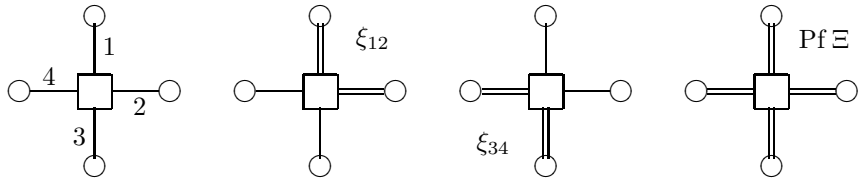

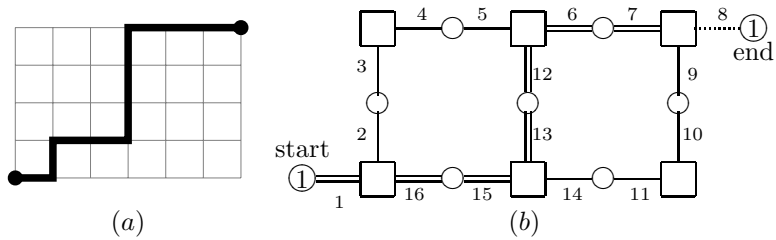
\begin{figure}[htb]
\[
\begin{array}{cc}
\begin{tikzpicture}
\draw[step=.5cm,gray,very thin] (0,0) grid (3,2);
  \draw[black, line width=.1cm] (0,0) circle (.05cm) -- (.5,0)  -- (.5,.5)  -- (1.5,.5) -- (1.5,2) -- (3,2) circle (.05cm);
\end{tikzpicture}
&
\begin{xy}<10mm,0mm>:
(0,2) *+{\phantom{I}\,}*\frm{-};
p+(1,0) *+{\;}*\frm{o} **@{-};
p+(1,0) *+{\phantom{I}\,}*\frm{-} **@{-};
  p+(0,-.83)  **@{=},
p+(1,0) *+{\;}*\frm{o} **@{=};
p+(1,0) *+{\phantom{I}\,}*\frm{-} **@{=};
  p+(1,0) *+{1}*\frm{o} **@{.}, 
  p+(1,-.3) *{\text{end}},
(0,1.75);
p+(0,-.75) *+{\;}*\frm{o} **@{-};
p+(0,-1) *+{\phantom{I}\,}*\frm{-} **@{-}; 
  p+(-1,0) *+{1}*\frm{o} **@{=}, 
  p+(-1,.4) *{\text{start}},
p+(1,0) *+{\;}*\frm{o} **@{=};
p+(1,0) *+{\phantom{I}\,}*\frm{-} **@{=};
p+(0,1) *+{\;}*\frm{o} **@{=},
p+(1,0) *+{\;}*\frm{o} **@{-};
p+(1,0) *+{\phantom{I}\,}*\frm{-} **@{-};
p+(0,1) *+{\;}*\frm{o} **@{-};
 p+(0,.75)  **@{-};
(-.5,-.3) *{{}_{1}};
p+(.3,.9) *{{}_{2}};
p+(0,.9) *{{}_{3}};
p+(.8,.7) *{{}_{4}};
p+(.8,0) *{{}_{5}};
p+(1.2,0) *{{}_{6}};
p+(.8,0) *{{}_{7}};
p+(1.2,0) *{{}_{8}};
p+(-.4,-.8) *{{}_{9}};
p+(0,-.8) *{{}_{10}};
p+(-.7,-.8) *{{}_{11}};
p+(-.9,0) *{{}_{14}};
{p+(-.4,.8) *{{}_{13}}; p+(0,.8) *{{}_{12}};},
p+(-1.2,0) *{{}_{15}};
p+(-.9,0) *{{}_{16}};
\end{xy}\\
(a)&(b)
\end{array}
\]
\caption{A monotone path (a) and the two squares in its lower left (b) showing the start point and underlying factor graph.  A satisfying path is shown, with edges carrying a one doubled.  Junctions are $AE^2$ and gates $\sPf\Xi$ for whichever $\Xi$ we are considering.} \label{fig:grids}
\end{figure}

We can evaluate weighted versions, force certain path segments to be included, introduce holes in the region, and so on at no computational cost by varying our six-parameter choice of $\Xi$ at each grid vertex.  Including $m$ start and end vertices counts families of nonintersecting monotone paths.  

Setting all the free  entries of the $\Xi$ corresponding to each internal vertex to one allows moves in all four directions, loops, and self-intersection.  Since $\Pf \Xi=2$ in this case, crossings contribute a multiplicative factor of $2$.  In other words a path contributes $2^{\#\text{self intersections}}$ to the sum over all paths.

We can check our computation for the $1 \times 2$ grid in Figure \ref{fig:grids}(b); fix the edge order shown in Figure \ref{fig:grids}(b).  
For both monotone and general paths, $\Theta$ has a one in the places $\{ (2,3), (4,5), (6,7), (9,10), (11,14),$ $ (12,13), (15,16) \}$.
To enforce the starting and ending conditions, we disallow $(2,16)$ and $(7,9)$ (otherwise we'll get the perimeter loop included) in $\Xi$. 
For general paths from start to finish, $\Xi$ has an $\xi_{i<j}=1, i,j \in I$ submatrix for subsets  $I \in \{(1,2), (1,16),(3,4), (5,6,12),$ $ (7,8,9), (10,11), (13,14,15)\}$.  There are no sign flips for $\Theta$.  Then $\Pf(\Theta+\Xi)=4$ as expected.  For monotone paths, $\Theta$ is the same but $\Xi$ is adjusted to exclude  moves $(13,14)$ and $(5,12)$ ,  $I \in \{(1,2),(1,16),(3,4), (5,6), (6,12),$ $ (7,8,9), (10,11), (13,15), (14,15)\}$.  Then $\Pf(\Theta+\Xi)=3$ as expected.  To count closed loops with no start or finish fixed, and each edge traversed at most once, we use $I \in \{(1,2,16),(3,4), (5,6,12), (7,8,9), (10,11), (13,14,15)\}$ for $\Xi$ and get  $\Pf(\Theta+\Xi)=4$: empty, left or right square, and perimeter.  Some Python code that performs the computation is as follows (here we take advantage of the fact no sign flips are needed).
{\small 
\begin{verbatim}
import numpy as np
from math import sqrt

onesabove = lambda n:np.triu(np.ones((n,n))) - np.tril(np.ones((n,n))) 
def count(xiones,thetaones,edges)
    A=np.zeros((edges,edges,int))
    for I in thetaones+xiones:
        J=[i-1 for i in I] #adjust to index from zero
        A[np.ix_(J,J)] = onesabove(len(J))
    return sqrt(np.linalg.det(A))

thetaones=[(2,3), (4,5), (6,7), (9,10), (11,14), (12,13), (15,16)]
#first the loops and crossings allowed version
xiones=[(1,2),(1,16),(3,4), (5,6,12),(7,8,9), (10,11), (13,14,15)]
#monotone version
xiones=[(1,2),(1,16),(3,4),(5,6),(6,12),(7,8,9),(10,11),(13,15),(14,15)]
#no start or finish version
xiones=[(1,2,16),(3,4), (5,6,12),(7,8,9), (10,11), (13,14,15)]
\end{verbatim}
}
\end{example}
Krattenthaler \cite{krattenthaler1997determinant}, reduces a certain count of lattice paths to evaluation of a Pfaffian in a particular case.  Such problems are related to Hilbert series of ladder determinental rings.  Our example shows how such algorithms (in fact formulae) for various lattice path counting problems can be derived in general using holographic algorithms.

\begin{example}
Bonin et al. \cite{bonin2003lattice} give a polynomial time algorithm for computing the Tutte polynomial of a lattice path matroid, refined and generalized to a $O(n^5)$ algorithm for multi-path matroids in \cite{bonin2007multi}.  Computing the Tutte polynomial of many classes of matroid, including arbitrary transversal or bicircular matroids, is $\mathsf{\#P}$-complete \cite{gimenez2006complexity}.   
We now show how a Pfaffian circuit can be applied to evaluate the Tutte polynomial of a lattice path matroid in $O(n^{\omega_p})$ time, a substantial improvement.  

Following \cite{bonin2003lattice}, a lattice path matroid is defined as follows.  Let $P$ and $Q$ be two lattice paths $(0,0)$ to $(m,r)$ which do not cross.  Let  $\{p_{u_1}, \dots p_{u_r}\}$, $u_1<u_2 \cdots u_r$ the set of North steps of $P$, and $\{p_{\ell_1}, \dots p_{\ell_r}\}$, $\ell_1 < \cdots \ell_r$ the set of North steps of $Q$.  Let $M[P,Q]$ be the transversal matroid with ground set $[m+r]=\{1, 2, \dots, m+r\}$ and presentation $(N_i: i \in [r])$ where $N_i$ are the sets of consecutive integers $(\ell_i, \dots u_i)$. 
Lattice path matroids have many nice properties, such as being closed under matroid duality and direct sums.  There are $\frac{1}{2}(C_n + {n \choose \ceil{n/2} }$ connected lattice path matroids on $n+1$ elements up to isomorphism.
A lattice path matroid is any matroid isomorphic to such a matroid. Because of the following theorem, we can just consider a lattice path matroid to be given by the paths inside the region defined by $P$ and $Q$.

\begin{theorem}[\cite{bonin2003lattice}]
A subset $B$ of $[m+r]$ with $|B|=r$ is a basis of $M[P,Q]$ if and only if the associated lattice path $P(B)$ stays in the region bounded by $P$ and $Q$.  Thus the number of bases of $M[P,Q]$ is the number of lattice paths from $(0,0)$ to $(m,r)$ that go neither below nor above $Q$.
\end{theorem}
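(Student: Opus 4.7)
The plan is to reduce the statement to a Hall-type property of systems of distinct representatives (SDRs) of intervals, and then identify lattice paths staying in the region with such SDRs. Throughout, write the elements of $B$ in increasing order as $b_1 < b_2 < \cdots < b_r$, and recall that, by the definition of the transversal matroid, $B$ is a basis of $M[P,Q]$ precisely when there is a bijection $\sigma\colon [r] \to B$ with $\sigma(i) \in N_i = \{\ell_i, \ell_i+1, \dots, u_i\}$ for each $i$.

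First I would establish the key combinatorial equivalence: $B$ is a basis of $M[P,Q]$ if and only if $\ell_i \leq b_i \leq u_i$ for every $i \in [r]$. The ``if'' direction is immediate, since $\sigma(i) := b_i$ is then a valid SDR. For ``only if'', suppose toward a contradiction that some $b_i > u_i$, and let $\sigma$ be an SDR. The $r-i+1$ elements $b_i, b_{i+1}, \dots, b_r$ all exceed $u_i$, and hence exceed every $u_j$ with $j \leq i$ since the $u_j$ are increasing. But only $r-i$ indices in $[r]$ strictly exceed $i$, so by pigeonhole at least one of these elements, say $b_k$, satisfies $\sigma^{-1}(b_k) = j \leq i$; then $b_k \in N_j = [\ell_j, u_j]$ forces $b_k \leq u_j \leq u_i$, a contradiction. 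A symmetric argument applied to $b_1, \dots, b_i$ rules out $b_i < \ell_i$. This is the main technical step, and the whole argument hinges on the fact that the $N_i$ are \emph{nested intervals with both endpoints increasing in $i$}.

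Next I would match this condition against the geometric condition that $P(B)$ lies in the region bounded by $P$ (below) and $Q$ (above). Here $P(B)$ is the lattice path whose North steps occur at positions $b_1, \dots, b_r$; if $f_X(k)$ denotes the number of North steps of path $X$ in the first $k$ steps, then $P(B)$ lies weakly between $P$ and $Q$ iff $f_P(k) \leq f_{P(B)}(k) \leq f_Q(k)$ for all $k$. A standard equivalence for monotone step-position sequences (the same one used to compare Young tableau shapes) says this holds iff $\ell_i \leq b_i \leq u_i$ for all $i$: if $b_i \leq u_i$ for all $i$, then $u_i \leq k$ implies $b_i \leq k$, giving $f_{P(B)}(k) \geq f_P(k)$, and conversely if $b_i > u_i$ then at $k = u_i$ we have $f_{P(B)}(k) < i \leq f_P(k)$.

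Combining the two equivalences proves the first sentence; the second (counting bases by counting paths) then follows immediately by identifying bases with their sorted tuples $b_1 < \cdots < b_r$, which are in bijection with the lattice paths $P(B)$ from $(0,0)$ to $(m,r)$ contained in the region. The only real subtlety is the Hall-type pigeonhole in the second paragraph; everything else is packaging.
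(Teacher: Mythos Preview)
Your argument is correct. The Hall-type pigeonhole step is clean: from $b_i>u_i$ you get $r-i+1$ elements of $B$ that cannot be SDR values for any index $j\le i$, while only $r-i$ indices exceed $i$; the symmetric bound on the $\ell_i$ side works the same way. The translation between the interval condition $\ell_i\le b_i\le u_i$ and the geometric ``stays in the region'' condition via the height functions $f_X(k)$ is also right, and the final counting sentence follows.

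As for comparison: the present paper does not give a proof of this theorem. It is quoted verbatim as a result of Bonin, de Mier, and Noy \cite{bonin2003lattice} and used as input to the Tutte-polynomial example. So there is no ``paper's own proof'' to compare against here; your write-up is essentially the standard proof from the cited source, and nothing in the paper under review depends on how this lemma is established.

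One minor wording issue: the theorem as stated says the paths ``go neither below nor above $Q$,'' which is a slip in the paper (one of those should be $P$). Your proof correctly treats $P$ as the lower boundary and $Q$ as the upper boundary, consistent with $\ell_i\le u_i$.
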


The {\it Tutte polynomial} $t(M;x,y)$ of a matroid $M$ in terms of the internal $i(B)$ and external activities $e(B)$ of a basis is
\[
t(M; x,y) = \sum_{B \in \mathscr{B}(M)} x^{i(B)} y^{e(B)}.
\]
This is a useful formulation as we have the following.
\begin{theorem}[\cite{bonin2003lattice}]
Let $B$ be a basis of the lattice path matroid $M[P,Q]$ and let $P(B)$ be the associated path.  Then $i(B)$ is the number of times $P(B)$ meets the upper path $Q$ in a North step and $e(B)$ is the number of times $P(B)$ meets the lower path $P$ in an East step.
\end{theorem}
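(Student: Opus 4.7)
The plan is to work directly from the order-theoretic definitions of internal and external activity, using the fact that lattice path matroids are transversal matroids whose basis-exchange structure is encoded very explicitly in the lattice path geometry.

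First I would recall the standard characterization: for an ordered ground set, $k \in B$ is internally active iff $k$ is the largest element of the unique cocircuit $C^*(k,B) \subseteq (E \setminus B) \cup \{k\}$, and $k \notin B$ is externally active iff $k$ is the largest element of the unique circuit $C(k,B) \subseteq B \cup \{k\}$. Both are describable by basis exchange: $C^*(k,B)$ consists of $k$ together with every $j \notin B$ such that $(B \setminus \{k\}) \cup \{j\}$ is again a basis, and dually for $C(k,B)$.

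Next I would translate basis exchange into path-surgery. A basis of $M[P,Q]$ corresponds to a monotone lattice path $P(B)$ from $(0,0)$ to $(m,r)$ staying weakly between $P$ and $Q$, with $B$ indexing the North steps. Swapping a North step at position $k$ with an East step at position $j$ corresponds to deleting the North step at position $k$ and inserting a North step at position $j$; the intermediate segment of $P(B)$ is then shifted one unit either up (if $j < k$) or down (if $j > k$), while the rest of the path is unchanged. The key lemma is that this swap yields another valid path iff the shifted segment still lies in the region bounded by $P$ and $Q$, which happens precisely when $P(B)$ does not touch $Q$ at any step strictly between positions $k$ and $j$ (for $j > k$) or does not touch $P$ there (for $j < k$).

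From this, the main claim follows. For internal activity: $k \in B$ fails to be the largest element of $C^*(k,B)$ iff there is some admissible exchange with $j > k$, which by the lemma is equivalent to $P(B)$ staying strictly below $Q$ on the relevant segment; consequently $k$ is internally active iff $P(B)$ meets $Q$ precisely at its $k$-th (North) step. The argument for external activity is dual, with $P$ playing the role of $Q$ and East steps replacing North steps. The main obstacle is making the swap-surgery lemma precise: one must check that the shifted segment stays in the region exactly under the stated touching condition, and in particular handle correctly the case where $j$ is not adjacent to $k$ by reducing to the adjacent case via a short induction on $|j-k|$, using the observation that each elementary shift is blocked only by the path already touching the relevant bounding path.
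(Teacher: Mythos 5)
Note first that the paper only quotes this theorem from Bonin, de Mier, and Noy \cite{bonin2003lattice} without proof, so there is no in-paper proof to compare against. Your overall strategy---translating basis exchange into a shift of a path segment, proving a key lemma about when such a shift stays inside the region, and reading activity off from where $P(B)$ touches the bounding paths---is precisely the route taken in \cite{bonin2003lattice}, so the plan is sound. But two things are reversed at once: the convention compatible with the theorem as written (and used in \cite{bonin2003lattice}) is that $e\in B$ is internally active iff $e$ is the \emph{least} element of $C^*(e,B)$, not the largest; and you have $P$ and $Q$ switched in the surgery lemma---replacing a North step at position $k$ by one at position $j>k$ pushes the intermediate segment \emph{down}, so that move is obstructed by the lower path $P$, while $j<k$ pushes it up and is obstructed by $Q$.

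These reversals do not cancel. With ``largest'' plus your lemma you obtain only that the endpoint of the $k$-th step of $P(B)$ lies on $Q$, which is strictly weaker than the $k$-th step being a segment of $Q$. Concretely take $m=r=1$, $P=EN$, $Q=NE$, so $M[P,Q]\cong U_{1,2}$, and $B=\{2\}$ with $P(B)=EN=P$. The single North step of $P(B)$ shares no segment with $Q$, so the theorem gives $i(B)=0$; yet $C^*(2,B)=\{1,2\}$ has largest element $2$, so under your convention $2$ would be internally active and you would predict $i(B)=1$. Fix the convention to ``least,'' swap $P$ and $Q$ in the surgery lemma, and then your argument---including the reduction to an adjacent swap via the nearest blocking position---does reproduce the proof in \cite{bonin2003lattice}.
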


By selection of the $\Xi$ at each vertex, we have total freedom as to the weight of each possible two-edge path segment passing through any lattice point.  Thus we can just set the relevant $\xi_{ij}$ at the vertices along $P$ and $Q$ such that the value of the circuit is $t(M; x,y)$.  We conjecture that this approach can be extended to a fast algorithm for computing the Tutte polynomial of multipath matroids, where the path-basis relationship is more subtle.
\end{example}

\begin{example}[non-self-intersecting paths on a hexagonal grid]
Using an even trivalent Pfaffian gate (such as $\xi_{12}=\xi_{13}=\xi_{23}=1$), again paired with $AE^2$ junctions, leads to a hexagonal lattice (Figure \ref{fig:hexgrid}) where at each grid vertex only no path, or a path which both enters and exits, is allowed.  We also have not excluded separate closed loops. Thus we can count the (possibly weighted) number of non-self-intersecting paths (with extra loops) from a start vertex to a finish vertex in a connected region with $n$ hexagons in $O(n^{\omega_p})$ time.

\begin{figure}[htb]
\[
\begin{array}{cc}
\begin{tikzpicture}[scale=.50]
\grid{6}{8.7}{7};
\draw[x=.866 cm, line width=.1cm, black] 
(1,2.5) circle (.10cm) -- (2,2)  -- (3,2.5) -- (4,2)  -- (5,2.5) -- (5,3.5) 
-- (4,4)  -- (4,5) -- (5,5.5)  -- (6,5) -- (7,5.5) -- (7,6.5) 
-- (8,7) -- (9,6.5) -- (10,7) -- (10,8) -- (11,8.5) -- (11,9.5) circle (.10cm);
\end{tikzpicture}
&
\begin{xy}<7mm,0mm>:
(0,1) *+{\phantom{I}\,}*\frm{-}; 
 {p+(0,-1) *+{\;}*\frm{o} **@{-};   p+(0,-1) *+{\phantom{I}\,}*\frm{-} **@{-};p+(.86,-.86) *+{\;}*\frm{o} **@{=}; p+(.86,-.86) *+{\phantom{I}\,}*\frm{-}  **@{=};  p+(.86,.86) *+{\;}*\frm{o} **@{=};  p+(.86,.86) *+{\phantom{I}\,}*\frm{-}  **@{=};  p+(0,1) *+{\;}*\frm{o} **@{=}; p+(0,1) *+{\phantom{I}\,}*\frm{-}  **@{=};  p+(-.86,.86) *+{\;}*\frm{o} **@{=}; p+(-.56,.56) **@{=};  }, 
 {p+(.86,.86) *+{\;}*\frm{o} **@{-};  p+(.86,.86) *+{\phantom{I}\,}*\frm{-} **@{-};  p+(0,1) *+{\;}*\frm{o} **@{=}; p+(0,1) *+{\phantom{I}\,}*\frm{-}  **@{=};  p+(-.86,.86) *+{\;}*\frm{o} **@{-}; p+(-.86,.86) *+{\phantom{I}\,}*\frm{-}  **@{-}; p+(-.86,-.86) *+{\;}*\frm{o} **@{-};  p+(-.86,-.86) *+{\phantom{I}\,}*\frm{-} **@{-}; p+(0,-1) *+{\;}*\frm{o} **@{-};   p+(0,-.65)  **@{-};}, 
 {p+(-.86,.86) *+{\;}*\frm{o} **@{-};   p+(-.86,.86) *+{\phantom{I}\,}*\frm{-} **@{-};  p+(-.86,-.86) *+{\;}*\frm{o} **@{-}; p+(-.86,-.86) *+{\phantom{I}\,}*\frm{-}  **@{-}; p+(0,-1) *+{\;}*\frm{o} **@{-};   p+(0,-1) *+{\phantom{I}\,}*\frm{-} **@{-}; p+(-1,0) *+{1}*\frm{o} **@{=},  p+(.86,-.86) *+{\;}*\frm{o} **@{=}; p+(.86,-.86) *+{\phantom{I}\,}*\frm{-}  **@{=};  p+(.86,.86) *+{\;}*\frm{o} **@{=};  p+(.56,.56)  **@{=}; }, 
\end{xy}
\end{array}
\]

\caption{Non-self-intersecting path in a hexagonal grid and lower left portion of underlying factor graph. Gates on the boundary are (possibly scaled) $AE_2$, since there is only one path through them.} \label{fig:hexgrid}
\end{figure}
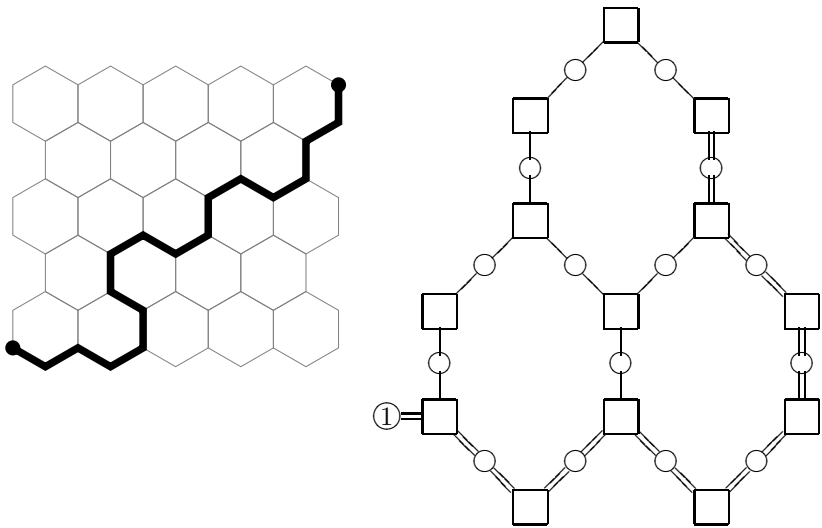
\end{example}
Because of the fast partition function available for such weighted path counting problems, estimating the $\Xi$ parameters of a statistical model over paths by maximum likelihood becomes feasible.

\section{Signatures Pfaffian after basis change} \label{sec:basischange}
We postpone the majority of this investigation to future work but mention a few results here.
\subsection{Symmetric signatures under homogeneous basis change}
The space of symmetric signatures under homogeneous basis change has been well studied in algebraic geometry for well over a century. Let $V$ be a two dimensional vector space; then the space of symmetric signatures up to scale is $\PP(\Sym^n V) \isom \PP^n$, and the action of a homogeneous change of basis is the action of $\PGL_2$.  The orbits have been computed in many cases.  The symmetric signatures which are realizable are precisely those which have an orbit representative in $S_+ \union S_-$. 

Symmetric signatures were analyzed in \cite{caisigpaper}.  We give a more concise geometric proof of some of these results. 
\begin{proposition}
All arity three symmetric signatures are Pfaffian after homogeneous basis change.
\end{proposition}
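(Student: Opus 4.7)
The plan is to identify each symmetric arity-three signature with a binary cubic form and classify its $\PGL_2$-orbit by root type. Writing
\[
\sigma = \sigma_0|000\ket + \sigma_1(|001\ket+|010\ket+|100\ket) + \sigma_2(|011\ket+|101\ket+|110\ket) + \sigma_3|111\ket,
\]
associate to $\sigma$ the binary cubic $f_\sigma(x,y) = \sigma_0 x^3 + 3\sigma_1 x^2 y + 3\sigma_2 x y^2 + \sigma_3 y^3 \in \Sym^3 V$. Under this identification the induced action of homogeneous basis change on $\PP(\Sym^3 V)\isom \PP^3$ is the classical $\PGL_2$-action on binary cubics by change of variable in $(x,y)$. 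Even-support signatures ($\sigma_1 = \sigma_2 = 0$) correspond to forms $\sigma_0 x^3 + \sigma_3 y^3$, while odd-support signatures ($\sigma_0 = \sigma_3 = 0$) correspond precisely to forms divisible by $xy$.

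Next I would handle each orbit type separately by moving the roots of $f_\sigma$ to prescribed points of $\PP^1$, using the sharp $3$-transitivity of $\PGL_2$ on $\PP^1$. If $f_\sigma$ has a triple root, send it to $[1:0]$ to obtain a scalar multiple of $y^3$, which has even support. If $f_\sigma$ has a double root and a simple root, send them to $[1:0]$ and $[0:1]$ respectively to obtain a scalar multiple of $xy^2$, which has odd support. If $f_\sigma$ has three distinct roots (over $\overline{\FF}$), send two of them to $[0:1]$ and $[1:0]$ to expose a factor $xy$ and obtain a scalar multiple of $xy(\alpha x + \beta y)$ with $[\alpha:\beta] \neq [0:1],[1:0]$; this is again odd support. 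The zero form is trivial.

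In every case a suitable homogeneous basis change puts $\sigma$ into pure-parity support, and by Proposition \ref{prop:aritythreeparity} every such pure-parity arity-three signature is realizable in the standard basis as a partial contraction of Pfaffian predicates. Hence $\sigma$ itself is Pfaffian after the inverse homogeneous basis change. The main thing to verify is that the three prescribed placements of roots can be carried out over the field of interest; this is sharp $3$-transitivity of $\PGL_2$, and passing to the algebraic closure is harmless because Proposition \ref{prop:aritythreeparity} is stated over a general field and all constructions there are defined over the coefficient field once representatives are chosen.
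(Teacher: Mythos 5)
Your overall strategy matches the paper's: identify symmetric arity-three signatures with binary cubics, classify the $\PGL_2$-orbits of $\PP(\Sym^3 V)$ by root multiplicity (triple root, double plus simple, three distinct), and exhibit a Pfaffian representative of each orbit in light of Proposition~\ref{prop:aritythreeparity}. But your normalization in the generic case has a genuine gap. Sending two of the three distinct roots to $[0:1]$ and $[1:0]$ gives $f = xy(\alpha x + \beta y) = \alpha x^2 y + \beta x y^2$ with $\alpha\beta \neq 0$, and the corresponding signature has nonzero coefficients on both the weight-one basis vectors $|001\ket,|010\ket,|100\ket$ (from $x^2 y$) and the weight-two basis vectors $|011\ket,|101\ket,|110\ket$ (from $xy^2$). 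That is \emph{mixed} parity, not odd, so Proposition~\ref{prop:supportparity} rules it out as a standard-basis Pfaffian predicate and Proposition~\ref{prop:aritythreeparity} does not apply to it. You have used only $2$-transitivity of $\PGL_2$; you need to pin down all three roots to a pure-parity configuration. For instance, send the three roots to $[0:1],[1:1],[1:-1]$ to obtain $x(x-y)(x+y) = x^3 - xy^2$ (support on weights $0$ and $2$, even parity), which is exactly the paper's representative $|0\ket\circ(|0\ket+|1\ket)\circ(|0\ket-|1\ket)$; equivalently, the roots $[1:0],[1:1],[1:-1]$ give $y(x-y)(x+y) = x^2 y - y^3$, odd parity, which also works. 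Two minor points: in your first two cases you swap parities ($y^3 = |111\ket$ is odd weight, not even, and $xy^2 \mapsto |011\ket+|101\ket+|110\ket$ is even weight, not odd), but those slips are harmless since Proposition~\ref{prop:aritythreeparity} covers both parities; and you should also take care over a non-algebraically-closed field, since a binary cubic over $\FF$ may have all three roots in a proper extension, so the $\PGL_2(\FF)$-orbit classification can be finer than the root-type classification over $\overline{\FF}$ -- the paper sidesteps this by working over $\C$.
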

\begin{proof}
 In the arity three case \cite[Cor. 3.1]{caisigpaper}, which corresponds to $\PP(\Sym^3V)$, there are three $\PGL_2$ orbits.  The $\PGL_2$ action is the projective motions of twisted cubic $[v^3]$, the points $[v^2 \cdot w]$ with $v,w$ independent, and the points $[u\cdot v \cdot w]$, with $u,v,w$ pairwise independent.  

Thus to show that these are implementable under basis change relative to a set $\scrP$ of restricted predicates, (here the predicates which are Pfaffian in the standard basis $\scrP=S_+ \union S_-$), we just show that a representative of each orbit lies in $\scrP$.  This is clear since $|0\ket^{\circ 3}$ is Pfaffian, $|0\ket^{\circ 2}\circ |1\ket = |001\ket + |010\ket+|100\ket$ is odd Pfaffian, and $|0\ket \circ (|0\ket + |1\ket)  \circ (|0\ket - |1\ket)$ is Pfaffian.
\end{proof}

As $n$ increases the number of orbits is no longer finite.  In that case $PGL_2$ equivalence of points has a more complex description in terms of invariants (this is a subject of classical and geometric invariant theory).  That is, one can find invariants of the $PGL_2$ action such that two points lie in the same orbit iff the invariants take the same values (and that the ring of invariants is finitely generated was the motivation for the Hilbert basis theorem).  This means that a description of the attainable signatures can be had by considering the variety traced out by the invariants as we vary the point in $\scrP$.

\begin{example} \label{ex:Xmatchings}
The problem $\mathsf{\#}$Pl-Mon-Rtw-NAE-SAT was treated in \cite{HAWMG}.  We now give an example of a problem, $X$-matchings, from \cite{ValiantFOCS2004} that requires a homogeneous basis change, and couldn't be treated in \cite{HAWMG} because it requires the extensions to odd parity discussed above.
\begin{problem}[X-matchings \cite{ValiantFOCS2004}]
The input is a planar edge-weighted bipartite graph $(V,E,W)$ where the vertices $V=V_1 \union V_2$ are again partitioned in two, and the vertices in $V_1$ have degree 2.  The output is the sum of the weight of all matchings, where the weight of a matching is the product of the weights of the included edges, and $-(w_1 + \cdots + w_k)$ for all the unsaturated vertices in $V_2$, where $w_1, \dots, w_k$ are the weights of the edges incident to that unsaturated node.
\end{problem}

The change of basis for $X$-matchings, which is the same on all edges, is
\[
A=
\begin{pmatrix}
1&\phantom{-}1\\
0&-1\\
\end{pmatrix}
\qquad 
A^{-1}=
\begin{pmatrix}
1&1\\
0 &-1\\
\end{pmatrix}
\qquad 
A^{\vee}=
\begin{pmatrix}
1&0\\
1 &-1\\
\end{pmatrix}
\]
Let junctions (generators) represent $V1$ vertices in the original matching problem with $J=\bra 00 | +  \bra 01 | + \bra 10 |$.  The gates (recognizers) replace $V2$ vertices, where a degree $d$ vertex has gate $G_d=-d|0 \dots 0\ket + |0 \dots 01\ket + \dots + |1 0 \dots 0 \ket$.  Here we need to use a parity switch.  After basis change we obtain
\begin{eqnarray*}
A^{\vee \ot 2}(J)& =& (\bra0|+\bra 1|)^{\ot 2}   -(\bra0|+\bra1|)\bra1|  -\bra1|(\bra0|+\bra1|) \\
&=& \bra00| -\bra11|\\
A^{\ot 2}(G_2)& =& -2|00\ket + |0(0-1)\ket  + |(0-1)0\ket \\
&=&-2|00\ket +2|00\ket - |10\ket -|01\ket = |10\ket + |01\ket \\
A^{\ot d}(G_d)& =&  -d|0\ket^{\ot d} + d|0 \ket^{\ot d} - |0\dots 01\ket - \cdots - |10\dots 0\ket.\\
\end{eqnarray*}
Now 
\[
A^{\vee \ot 2}(J) = -\sPfd
\begin{pmatrix}
0&-1\\
1& 0\\
\end{pmatrix}, 
\qquad 
A^{\ot d}(G_d)=\;
\begin{xy}<10mm,0mm>:
(6,0)  *++{\Xi}*\frm{-};
  {p+(-.35,.25);p+(-1,0)  **@{-}},
  {p+(-.35,.20);p+(-1,0)  **@{-}},
  {p+(-.35,-.25);p+(-1,0)  **@{-}},
  p+(-.85,.08) *{\vdots},
  p+(1,0)  *+{1}*\frm{o}  **@{-},
  p+(-1.2,0) *{d};
\end{xy}
\]
where $A^{\ot d}(G_d)$ is the subPfaffian of the $(d+1)\times (d+1)$ matrix $\Xi$ with ones in the $(i,d+1)$th entries, $i<d+1$, and zeros for the rest of the free entries, contracted against the arity-one junction $\bra1|$.
\end{example}

\subsection{Pfaffian implementable predicates under free basis change}
We now consider the question of which predicates become compound Pfaffian after arbitrary, heterogeneous basis change over $\C$.  Since this corresponds to the $\times GL_2$ action in the binary case, it can be studied as a special case of the kind of orbit classification which follows from the work of Kronecker, and has been studied in the general Lie algebra context by Vinberg \cite{vinberg1975,vinberg_linear_1989}, Kac \cite{kac}, Kostant-Rallis, and others.  Parfenov \cite{parfenov_orbits_2001} spelled out the classification of spaces of tensors with finite orbits under a product of $GL$ actions in complete detail, with number of orbits, representatives, and the orbit abutment graph. This classification was rediscovered in the physics literature in the subsequent decade under the name ``classification of entangled states up to SLOCC equivalence.''

We have already seen that $\val(\Gamma)$ fills either the odd or even weight subspace, i.e. equals $\PP(S_+)$ or $\PP(S_-)$, for arity one or two. The consequence is that the entire orbit  of each parity subspace is accessible.  For arity three, there are six nonzero $\GL_2\times \GL_2 \times \GL_2$ orbits in $\C^2 \ot \C^2 \ot \C^2$.  It is well known that they have the following orbit representatives.
\begin{eqnarray*}
N_1&=&|000 \ket\\
N_2&=&|000\ket+|110\ket\\
N_3&=&|000\ket+|101\ket\\
N_4&=&|000\ket+|011\ket\\
N_5&=&|000\ket+|011\ket +|101\ket\\
N_6&=&|000\ket+|111\ket
\end{eqnarray*}
The first five are Pfaffian gates, and the last is Pfaffian in the Hadamard basis.  After possibly an $X$ change of basis, all are also realizable as junctions after some basis change.  Considering these gates or junctions as the generators in a $\mathsf{\#CSP}$ or circuit, we have the following.
\begin{theorem} \label{thm:freebasisarity3}
All arity three predicates are Pfaffian, possibly after a free basis change. 
\end{theorem}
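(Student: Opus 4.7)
The plan is to reduce to the $\GL_2 \times \GL_2 \times \GL_2$-orbit classification of $\C^2 \ot \C^2 \ot \C^2$ enumerated immediately above the theorem statement. Every arity-three predicate $P$ lies in the orbit of some $N_j$, so $P = (g_1 \ot g_2 \ot g_3) N_j$ for some $g_i \in \GL_2$. It then suffices to exhibit, for each $N_j$, a heterogeneous basis change $B_1 \ot B_2 \ot B_3$ sending $N_j$ to a Pfaffian predicate in the standard basis. Composing on edge $i$ gives a basis change $B_i \circ g_i^{-1}$ bringing $P$ to a Pfaffian predicate, and by Proposition \ref{prop:basischangepairing} applying such transformations edge by edge leaves the value of any closed circuit in which $P$ appears unchanged.

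I would first dispose of $N_1, \ldots, N_5$ by taking $B_i = \mathrm{Id}$. Each of these is supported only on bitstrings of even weight, so by Proposition \ref{prop:aritythreeparity} the corresponding gate is already Pfaffian in the standard basis. Explicit skew-symmetric matrices are immediate: $N_1 = \sPf 0$, while $N_2, N_3, N_4$ each correspond to a $3 \times 3$ skew-symmetric matrix with a single nonzero above-diagonal entry $\xi_{12}$, $\xi_{13}$, $\xi_{23}$ respectively, and $N_5 = \sPf \Xi$ with $\xi_{12}=0$ and $\xi_{13} = \xi_{23} = 1$.

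The essential case is $N_6 = |000\ket + |111\ket$, which mixes parities and hence cannot be Pfaffian in the standard basis. Here I would apply the Hadamard change of basis $H$ on all three edges; a direct expansion gives
\[
H^{\ot 3}(|000\ket + |111\ket) = 2\bigl(|000\ket + |011\ket + |101\ket + |110\ket\bigr),
\]
which is supported on even weight and equals $2\sPf \Xi$ for $\Xi$ the $3 \times 3$ skew-symmetric matrix with all above-diagonal entries equal to $1$. Predicates of mixed ality (${1 \choose 2}$ and ${2 \choose 1}$) are handled identically, with the dual basis change $A^{\vee}$ substituted on each edge corresponding to a dual factor; the $\GL_2^{\times 3}$-orbit structure on the dual and mixed spaces is the same.

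The step requiring input from outside the paper is the orbit classification of $\C^2 \ot \C^2 \ot \C^2$ itself, which I would invoke from Parfenov \cite{parfenov_orbits_2001} (rediscovered in the physics literature as the SLOCC classification of three-qubit entanglement). Once that is in hand the argument is a six-case check, and the only obstacle of any substance is the $N_6$ case, where the mixed-parity support forces a nontrivial basis change; the other five cases collapse directly to Proposition \ref{prop:aritythreeparity}.
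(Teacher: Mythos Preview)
Your proposal is correct and follows essentially the same route as the paper: reduce to the six nonzero $\GL_2^{\times 3}$ orbits in $\C^2\ot\C^2\ot\C^2$, observe that $N_1,\dots,N_5$ are already Pfaffian in the standard basis, and apply the Hadamard basis change to handle $N_6$. The paper's argument is terser (it simply asserts that the first five are Pfaffian and that $N_6$ is Pfaffian in the Hadamard basis, then remarks that junctions are handled by an additional $X$ change of basis), but your explicit matrices and the computed expansion of $H^{\ot 3}N_6$ are exactly what underlies those assertions.
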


\begin{corollary}
In particular, all SLOCC equivalence classes of tripartite entangled qubits can be generated as Pfaffian predicates under free basis change.
\end{corollary}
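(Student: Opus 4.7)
The strategy is to reduce any arity-three predicate $P$ to one of the six canonical orbit representatives $N_1, \ldots, N_6$ via a suitable free basis change, and then verify each representative is Pfaffian after at most one additional basis change on each edge.

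For the reduction, I would invoke the classical classification recalled just above the theorem: the nonzero $\GL_2 \times \GL_2 \times \GL_2$ orbits on $V^1 \ot V^2 \ot V^3 \cong (\C^2)^{\ot 3}$ are precisely those of $N_1, \dots, N_6$. Hence for any $P$ there exist $(A_1, A_2, A_3) \in \GL_2^3$ and some index $i$ with $P = (A_1 \ot A_2 \ot A_3) N_i$. I would apply the per-edge basis change $B_k := A_k^{-1}$ to the three edges incident on $P$; by Proposition~\ref{prop:basischangepairing} the value of any closed circuit containing $P$ is unchanged, and $P$ is represented by $N_i$ in the new bases.

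Next, one verifies each $N_i$ is Pfaffian, possibly after one further basis change. For $i \in \{1,\ldots,5\}$ the tensor $N_i$ is even-weight and is a subPfaffian directly: from the identity $\sPf \Xi = |000\ket + \xi_{12}|110\ket + \xi_{13}|101\ket + \xi_{23}|011\ket$ valid for $3 \times 3$ skew-symmetric $\Xi$ (a special case of Proposition~\ref{prop:aritythreeparity}), one reads off $\Xi_1 = 0$, $\Xi_2$ with $\xi_{12}=1$, $\Xi_3$ with $\xi_{13}=1$, $\Xi_4$ with $\xi_{23}=1$, and $\Xi_5$ with $\xi_{13}=\xi_{23}=1$.

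The main obstacle is $N_6 = |000\ket + |111\ket$: by Proposition~\ref{prop:supportparity} it cannot be Pfaffian in the standard basis, because its support straddles both parity subspaces. I would resolve this by an additional Hadamard change on each edge: a direct expansion yields $H^{\ot 3} N_6 = 2(|000\ket + |011\ket + |101\ket + |110\ket) = 2\,\sPf \Xi$ for $\Xi$ with $\xi_{12}=\xi_{13}=\xi_{23}=1$, the factor $2$ being absorbed into the constant $\alpha$ of Definition~\ref{defn:holographicpredicate}. To package the two steps as a single free basis change, simply replace $B_k = A_k^{-1}$ by $B_k = H A_k^{-1}$ on each edge in the $N_6$ case; this remains an element of $\GL_2$ and so is a legitimate per-edge basis change. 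The corollary on SLOCC equivalence classes is then immediate, since those classes coincide with the $\GL_2 \times \GL_2 \times \GL_2$ orbits on $(\C^2)^{\ot 3}$.
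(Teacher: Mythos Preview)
Your proposal is correct and follows essentially the same approach as the paper: the paper also lists the six $\GL_2^{\times 3}$ orbit representatives $N_1,\dots,N_6$, observes that $N_1,\dots,N_5$ are Pfaffian gates directly and that $N_6$ is Pfaffian in the Hadamard basis, and then notes that SLOCC equivalence classes coincide with these orbits. Your write-up simply makes explicit the $\Xi$ matrices and the Hadamard computation that the paper leaves to the reader.
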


Of course, there are further restrictions required to obtain a Pfaffian circuit; these come into play when we want to compute with these states.  For arity four and above, there are already infinitely many orbits under arbitrary basis change and more sophisticated geometric techniques are needed to analyze the situation.  Thus we postpone this analysis to future work.

\section{Acknowledgments}
The author would like to thank J.-Y. Cai, D. Green, S. Hallgren, J.M. Landsberg, S. Norine, Y. Shi, A. Smith, B. Sturmfels, J. Turner, and L. Valiant for helpful discussions.

\pagebreak
\bibliography{Lmatrix}

\def\cdprime{$''$} \def\Dbar{\leavevmode\lower.6ex\hbox to 0pt{\hskip-.23ex
  \accent"16\hss}D} \def\cprime{$'$} \def\cprime{$'$} \def\cprime{$'$}
  \def\cprime{$'$} \def\Dbar{\leavevmode\lower.6ex\hbox to 0pt{\hskip-.23ex
  \accent"16\hss}D} \def\cprime{$'$}
\begin{thebibliography}{10}

\bibitem{aaronson2010computational}
S.~Aaronson and A.~Arkhipov.
\newblock {The Computational Complexity of Linear Optics}.
\newblock {\em Arxiv preprint arXiv:1011.3245}, 2010.

\bibitem{Aji2000}
S.~Aji and R.~McEliece.
\newblock The generalized distributive law.
\newblock {\em IEEE Trans. Inform. Theory}, 46(2):325--343, 2000.

\bibitem{Beaudry07}
M.~Beaudry and M.~Holzer.
\newblock The complexity of tensor circuit evaluation.
\newblock {\em Computational Complexity}, 16(1):60--111, 2007.

\bibitem{bodnarchuk1969galois}
V.~Bodnarchuk, L.~Kaluzhnin, V.~Kotov, and B.~Romov.
\newblock {Galois theory for post algebras. i}.
\newblock {\em Cybernetics and Systems Analysis}, 5(3):243--252, 1969.

\bibitem{bohler2003playing}
E.~B\"ohler, N.~Creignou, S.~Reith, and H.~Vollmer.
\newblock {Playing with Boolean blocks, part I: Post’s lattice with
  applications to complexity theory}.
\newblock {\em SIGACT News}, 34(4):38--52, 2003.

\bibitem{bohler2004playing}
E.~B\"ohler, N.~Creignou, S.~Reith, and H.~Vollmer.
\newblock {Playing with Boolean blocks, part II: Constraint satisfaction
  problems}.
\newblock {\em ACM SIGACT-Newsletter}, 35(1):22--35, 2004.

\bibitem{bonin2003lattice}
J.~Bonin, A.~De~Mier, and M.~Noy.
\newblock {Lattice path matroids: enumerative aspects and Tutte polynomials}.
\newblock {\em Journal of Combinatorial Theory, Series A}, 104(1):63--94, 2003.

\bibitem{bonin2007multi}
J.~Bonin and O.~Gimenez.
\newblock {Multi-path matroids}.
\newblock {\em Combinatorics, Probability and Computing}, 16(02):193--217,
  2007.

\bibitem{boyer_stop_2004}
J.~M. Boyer, P.~F. Cortese, M.~Patrignani, and G.~D. Battista.
\newblock Stop minding your p’s and q’s: Implementing a fast and simple
  {DFS-Based} planarity testing and embedding algorithm.
\newblock In {\em Graph Drawing}, pages 25--36. 2004.

\bibitem{boyer_cutting_2004}
J.~M. Boyer and W.~J. Myrvold.
\newblock On the cutting edge: Simplified o(n) planarity by edge addition.
\newblock {\em {Journal} {of} {Graph} {Algorithms} {and} {Applications}},
  8:2004, 2004.

\bibitem{Bravyi2008}
S.~Bravyi.
\newblock Contraction of matchgate tensor networks on non-planar graphs.
\newblock In K.~Mahdavi and D.~Koslover, editors, {\em Advances in quantum
  computation}, volume 482 of {\em Contemporary Mathematics}, pages 179--211.
  AMS, 2009.

\bibitem{bremner2010classical}
M.~Bremner, R.~Jozsa, and D.~Shepherd.
\newblock {Classical simulation of commuting quantum computations implies
  collapse of the polynomial hierarchy}.
\newblock {\em Proceedings of the Royal Society A: Mathematical, Physical and
  Engineering Science}, 2010.

\bibitem{bulatov2010complexity}
A.~Bulatov.
\newblock {The complexity of the counting constraint satisfaction problem}.
\newblock {\em Automata, Languages and Programming}, pages 646--661, 2010.

\bibitem{bulatov2005classifying}
A.~Bulatov, P.~Jeavons, and A.~Krokhin.
\newblock {Classifying the Complexity of Constraints Using Finite Algebras}.
\newblock {\em SIAM Journal on Computing}, 34(3):742, 2005.

\bibitem{caisigpaper}
J.~Cai and P.~Lu.
\newblock Signature theory in holographic algorithms.
\newblock {\em preprint:
  http://pages.cs.wisc.edu/{}~jyc/papers/signature-thy.pdf}.

\bibitem{MR2305569}
J.-Y. Cai and V.~Choudhary.
\newblock Some results on matchgates and holographic algorithms.
\newblock In {\em Automata, languages and programming. {P}art {I}}, volume 4051
  of {\em Lecture Notes in Comput. Sci.}, pages 703--714. Springer, Berlin,
  2006.

\bibitem{MR2277247}
J.-Y. Cai and V.~Choudhary.
\newblock Valiant's holant theorem and matchgate tensors.
\newblock In {\em Theory and applications of models of computation}, volume
  3959 of {\em Lecture Notes in Comput. Sci.}, pages 248--261. Springer,
  Berlin, 2006.

\bibitem{MR2402465}
J.-Y. Cai and P.~Lu.
\newblock Holographic algorithms: from art to science.
\newblock In {\em S{TOC}'07---{P}roceedings of the 39th {A}nnual {ACM}
  {S}ymposium on {T}heory of {C}omputing}, pages 401--410. ACM, New York, 2007.

\bibitem{MR2424719}
J.-Y. Cai and P.~Lu.
\newblock Holographic algorithms: the power of dimensionality resolved.
\newblock In {\em Automata, languages and programming}, volume 4596 of {\em
  Lecture Notes in Comput. Sci.}, pages 631--642. Springer, Berlin, 2007.

\bibitem{cai2007theory}
J.~Cai, V.~Choudhary, and P.~Lu.
\newblock {On the theory of matchgate computations}.
\newblock In {\em Computational Complexity, 2007. CCC'07. Twenty-Second Annual
  IEEE Conference on}, pages 305--318. IEEE, 2007.

\bibitem{CookPredicates}
M.~Cook and J.~Bruck.
\newblock Implementability among predicates.
\newblock {\em Preprint, California Institute of Technology, Pasadena, CA},
  2005.

\bibitem{Damm03}
C.~Damm, M.~Holzer, and P.~McKenzie.
\newblock The complexity of tensor calculus.
\newblock {\em Computational Complexity}, 11(1):54–--89, 2003.

\bibitem{FH}
W.~Fulton and J.~Harris.
\newblock {\em Representation theory}, volume 129 of {\em Graduate Texts in
  Mathematics}.
\newblock Springer-Verlag, New York, 1991.
\newblock A first course, Readings in Mathematics.

\bibitem{Galbiati91}
G.~Galbiati and F.~Maffioli.
\newblock On the computation of pfaffians.
\newblock {\em Discrete applied mathematics}, 51(3):269–--275, 1994.

\bibitem{geiger1968closed}
D.~Geiger.
\newblock {Closed systems of functions and predicates.}
\newblock {\em Pacific Journal of Mathematics}, 27(1):95--100, 1968.

\bibitem{gimenez2006complexity}
O.~Gim{\'e}nez and M.~Noy.
\newblock {On the complexity of computing the Tutte polynomial of bicircular
  matroids}.
\newblock {\em Combinatorics, Probability and Computing}, 15(03):385--395,
  2006.

\bibitem{hopcroft_efficient_1974}
J.~Hopcroft and R.~Tarjan.
\newblock Efficient planarity testing.
\newblock {\em J. {ACM}}, 21(4):549--568, 1974.

\bibitem{humphreys2010history}
K.~Humphreys.
\newblock {A history and a survey of lattice path enumeration}.
\newblock {\em Journal of statistical planning and inference},
  140(8):2237--2254, 2010.

\bibitem{Ihler2004}
A.~Ihler, J.~Fisher, III, R.~Moses, and A.~Willsky.
\newblock Nonparametric belief propagation for self-calibration in sensor
  networks.
\newblock In {\em IPSN '04}, pages 225--233, New York, NY, USA, 2004. ACM.

\bibitem{JozsaMiyake}
R.~Jozsa and A.~Miyake.
\newblock Matchgates and classical simulation of quantum circuits.
\newblock {\em Proc. R. Soc. A}, 464(2100):3089--3106, 2008.

\bibitem{kac}
V.~G. Kac.
\newblock Some remarks on nilpotent orbits.
\newblock {\em J. Algebra}, 64(1):190--213, 1980.

\bibitem{krattenthaler1997determinant}
C.~Krattenthaler.
\newblock {Determinant identities and a generalization of the number of totally
  symmetric self-complementary plane partitions}.
\newblock {\em Electron. J. Combin}, 4(1), 1997.

\bibitem{Kschischang01factorgraphs}
F.~Kschischang, B.~Frey, and H.-A. Loeliger.
\newblock Factor graphs and the sum-product algorithm.
\newblock {\em IEEE Transactions on Information Theory}, 47:498--519, 2001.

\bibitem{HAWMG}
J.~Landsberg, J.~Morton, and S.~Norine.
\newblock Holographic algorithms without matchgates.
\newblock {\em preprint arXiv:0904.0471}, 2009.

\bibitem{lauritzen1988local}
S.~Lauritzen and D.~Spiegelhalter.
\newblock {Local computations with probabilities on graphical structures and
  their application to expert systems}.
\newblock {\em Journal of the Royal Statistical Society. Series B
  (Methodological)}, pages 157--224, 1988.

\bibitem{lipton_generalized_1979}
R.~J. Lipton, D.~J. Rose, and R.~E. Tarjan.
\newblock Generalized nested dissection.
\newblock {\em {SIAM} Journal on Numerical Analysis}, 16(2):346--358, 1979.

\bibitem{Markov2008}
I.~Markov and Y.~Shi.
\newblock Simulating quantum computation by contracting tensor networks.
\newblock {\em SIAM J. Computing}, 38:963--981, 2008.

\bibitem{Montanari}
M.~Mezard and A.~Montanari.
\newblock {\em Information, Physics, and Computation}.
\newblock Oxford University Press, Oxford, 2009.

\bibitem{mukai_curves_1995}
S.~Mukai.
\newblock Curves and symmetric spaces, i.
\newblock {\em American Journal of Mathematics}, 117(6):1627--1644, 1995.

\bibitem{Murphy99loopybelief}
K.~Murphy, Y.~Weiss, and M.~Jordan.
\newblock Loopy belief propagation for approximate inference: An empirical
  study.
\newblock In {\em In Proceedings of Uncertainty in AI}, pages 467--475, 1999.

\bibitem{NielsenChuang}
M.~A. Nielsen and I.~L. Chuang.
\newblock {\em Quantum computation and quantum information}.
\newblock Cambridge University Press, Cambridge, 2000.

\bibitem{Pachter16112004}
L.~Pachter and B.~Sturmfels.
\newblock {Tropical geometry of statistical models}.
\newblock {\em Proceedings of the National Academy of Sciences of the United
  States of America}, 101(46):16132--16137, 2004.

\bibitem{parfenov_orbits_2001}
P.~G. Parfenov.
\newblock Orbits and their closures in the spaces $\mathbb{C}^{k_1}\otimes
  \cdots \otimes \mathbb{C}^{k_r}$.
\newblock {\em Sbornik: Mathematics}, 192(1):89--112, 2001.

\bibitem{post_two-valued_1941}
E.~L. Post.
\newblock {\em The two-valued iterative systems of mathematical logic}.
\newblock Princeton University Press, 1941.

\bibitem{MR1069389}
J.~Stembridge.
\newblock Nonintersecting paths, {P}faffians, and plane partitions.
\newblock {\em Adv. Math.}, 83(1):96--131, 1990.

\bibitem{TerhalDiVincenzo2002}
B.~Terhal and D.~DiVincenzo.
\newblock Classical simulations of noninteracting-fermion quantum circuits.
\newblock {\em Phys. Rev. A}, 65:1--10, 032325.

\bibitem{MR2120307}
L.~Valiant.
\newblock Quantum computers that can be simulated classically in polynomial
  time.
\newblock In {\em Proceedings of the {T}hirty-{T}hird {A}nnual {ACM}
  {S}ymposium on {T}heory of {C}omputing}, pages 114--123 (electronic), New
  York, 2001. ACM.

\bibitem{QCtcbSiPT}
L.~Valiant.
\newblock Quantum circuits that can be simulated classically in polynomial
  time.
\newblock {\em SIAM J. Comput.}, 31(4):1229--1254, 2002.

\bibitem{ValiantFOCS2004}
L.~Valiant.
\newblock Holographic algorithms (extended abstract).
\newblock {\em Proceedings of the 45th annual Symposium on Foundations of
  Computer Science}, pages 306--315, 2004.

\bibitem{valiant2006accidental}
L.~Valiant.
\newblock {Accidental algorithms}.
\newblock In {\em Proc. 47th Annual IEEE Symposium on Foundations of Computer
  Science}, pages 509--517. Citeseer, 2006.

\bibitem{vinberg1975}
E.~Vinberg.
\newblock On the classification of nilpotent elements of graded lie algebras.
\newblock {\em Dokl. Akad. Nauk SSSR}, 225(4):745--748, 1975.

\bibitem{vinberg_linear_1989}
E.~Vinberg.
\newblock {\em Linear Representations of Groups}.
\newblock Birkh\"auser Basel, first edition, 1989.

\bibitem{yuster2008matrix}
R.~Yuster.
\newblock {Matrix sparsification for rank and determinant computations via
  nested dissection}.
\newblock In {\em Foundations of Computer Science, 2008. FOCS'08. IEEE 49th
  Annual IEEE Symposium on}, pages 137--145. IEEE, 2008.

\bibitem{Zak}
F.~L. Zak.
\newblock {\em Tangents and secants of algebraic varieties}, volume 127 of {\em
  Translations of Mathematical Monographs}.
\newblock American Mathematical Society, Providence, RI, 1993.
\newblock Translated from the Russian manuscript by the author.

\end{thebibliography}
\pagebreak

\end{document}